\newcommand{\cA}{{\ensuremath{\mathcal A}} }
\newcommand{\cB}{{\ensuremath{\mathcal B}} }
\newcommand{\cO}{{\ensuremath{\mathcal O}} }
\newcommand{\cF}{{\ensuremath{\mathcal F}} }
\newcommand{\bE}{{\ensuremath{\mathbf E}} }
\newcommand{\bbC}{{\ensuremath{\mathbb C}} }
\newcommand{\bbE}{{\ensuremath{\mathbb E}} }
\newcommand{\bbL}{{\ensuremath{\mathbb L}} }
\newcommand{\bbP}{{\ensuremath{\mathbb P}} }
\newcommand{\bbR}{{\ensuremath{\mathbb R}} }
\newcommand{\gb}{\beta}
\newcommand{\gep}{\varepsilon}       
\newcommand{\gG}{\Gamma}
\newcommand{\gL}{\Lambda}
\numberwithin{equation}{section}
\newtheorem{theorem}{Theorem}[section]
\newtheorem{lemma}[theorem]{Lemma}
\newtheorem{proposition}[theorem]{Proposition}
\newtheorem{corollary}[theorem]{Corollary}
\newtheorem{remark}[theorem]{Remark}
\theoremstyle{definition}
\renewcommand{\tilde}{\widetilde}          
\DeclareMathSymbol{\leqslant}{\mathalpha}{AMSa}{"36} 
\DeclareMathSymbol{\geqslant}{\mathalpha}{AMSa}{"3E} 
\DeclareMathSymbol{\eset}{\mathalpha}{AMSb}{"3F}     
\renewcommand{\leq}{\;\leqslant\;}                   
\renewcommand{\geq}{\;\geqslant\;}                   
\newcommand{\dd}{\text{\rm d}}             
\newcommand{\C}{\mathbb{C}}
\newcommand{\R}{\mathbb{R}}
\newcommand{\N}{\mathbb{N}}
\newcommand{\E}{\mathds{E}}
\renewcommand{\P}{\mathds{P}}
\newcommand{\ind}{\mathds{1}}
\DeclareDocumentCommand \Pmp { m m o} {
\IfNoValueTF{#3}
{P_{#1}^{#2}}
{P_{#1}^{#2}\left(#3\right)}
}
\DeclareDocumentCommand \Emp { m m o} {
\IfNoValueTF{#3}
{E_{#1}^{#2}}
{E_{#1}^{#2}\left[#3\right]}
}
\DeclareDocumentCommand \Pbr { m m m m o } {
\IfNoValueTF{#5}
{P_{#1}^{#2\stackrel{#4}{\rightarrow} #3}}
{P_{#1}^{#2\stackrel{#4}{\rightarrow} #3}\left(#5\right)}
}
\DeclareDocumentCommand \Ebr { m m m m o } {
\IfNoValueTF{#5}
{E_{#1}^{#2\stackrel{#4}{\rightarrow} #3}}
{E_{#1}^{#2\stackrel{#4}{\rightarrow} #3}\left[#5\right]}
}
\newcommand{\cov}{\mathrm{Cov}}
\newcommand{\var}{\mathrm{Var}}
\def\bi{\begin{itemize}}
\def\ei{\end{itemize}}
\def\bnum{\begin{enumerate}}
\def\enum{\end{enumerate}}
\def\<#1{\langle #1 \rangle}
\def\cF{\mathcal{F}}
\title{The semiclassical limit of Liouville conformal field theory}
\author{ Hubert Lacoin \footnote{IMPA, Rio de Janeiro, Brasil.}, R\'emi Rhodes \footnote{Universit{\'e} Aix-Marseille, I2M, Marseille, France. Partially supported by grant ANR-15-CE40-0013  Liouville.} \footnotetext[2]{Partially supported by grant ANR-15-CE40-0013  Liouville.},
 Vincent Vargas \footnote{ENS Ulm, DMA, 45 rue d'Ulm,  75005 Paris, France. Partially supported by grant ANR-15-CE40-0013  Liouville.} }
\begin{document}

\maketitle

\begin{abstract}
A rigorous probabilistic construction of Liouville conformal field theory (LCFT) on the Riemann sphere was recently given by David-Kupiainen and the last two authors. In this paper, we focus on the connection between LCFT and the classical Liouville field theory via the semiclassical approach. LCFT depends on a parameter $\gamma \in (0,2)$ and the limit $\gamma \to 0$ corresponds to the semiclassical limit of the theory. Within this asymptotic and under a negative curvature condition (on the limiting metric of the theory), we determine the limit of the correlation functions and of the associated Liouville field. We also establish a large deviation result for the Liouville field: as expected, the large deviation functional is the classical Liouville action.  As a corollary, we give a new (probabilistic) proof of the Takhtajan-Zograf theorem which relates the classical Liouville action (taken at its minimum) to Poincar\'e's accessory parameters. Finally, we gather conjectures in the positive curvature case (including the study of the so-called quantum spheres introduced by Duplantier-Miller-Sheffield).   
\end{abstract}

\begin{center}
\end{center}
\footnotesize



\noindent{\bf Key words or phrases:}  Liouville Quantum Theory, Gaussian multiplicative chaos, Polyakov formula, uniformization, accessory parameters, semiclassical analysis.

\noindent{\bf MSC 2000 subject classifications:  81T40,  81T20, 60D05.}    

\normalsize

\vspace{0.1cm}
\tableofcontents


\section{Introduction}
%
The purpose of this paper is to relate classical Liouville theory to Liouville conformal field theory (also called quantum Liouville theory)
through a semiclassical analysis. 
Before exposing the framework and the results of the paper,  we provide a short historical introduction to classical Liouville theory.

\medskip

We start by recalling a classical result of  uniformization by Picard about   the existence of hyperbolic metrics on the Riemann sphere, seen as the extended complex plane $\hat\C=\C\cup\{\infty\}$, with prescribed conical singularities.   In this context,  given $n\ge 3$ we let  
 $z_1,\dots,z_n\in \C $ distinct and $\chi_1,\dots,\chi_n\in \bbR$ denote respectively the prescribed locations and orders of our singularities. 
 We assume that our coefficients satisfy the two following conditions:
\begin{equation}\label{Negcurv}
\forall k, \quad   \chi_k <2 \quad \text{ and } \quad     \sum_{k=1}^n\chi_k>4.
\end{equation} 
Let $\Delta_z$ denote the standard Laplacian on $\C$, $\nabla_z$ the standard gradient and for $z=x+iy$ we adopt the complex notation $\partial_z=\frac{1}{2}(\partial_x-i \partial_y)$. We use the notation 
 $\frac{d}{d z}$ for the complex derivative of a holomorphic function.
 The classical result of Picard \cite{picard1,picard2} (see also \cite{troyanov}) asserts that for any $\Lambda>0$ the Liouville equation 
\begin{equation}\label{Liouv}
\Delta_z \phi=2 \pi \Lambda e^\phi
\end{equation}
possesses a unique smooth solution $\phi$ on $\C\setminus \{z_1,\dots,z_n\}$ with the following asymptotics near the singular points and  at infinity
\begin{equation}\label{lcfasympt}
\begin{cases}
 \phi(z)=\chi_k\ln\frac{1}{|z-z_k|}+\mathcal{O}(1)\quad &\text{as }\quad  z\to z_k,\\
  \phi(z)=-4\ln |z|+\mathcal{O}(1)\quad &\text{as }\quad  z\to \infty.
\end{cases}
  \end{equation}
We denote this solution by $\phi_\ast$.  The first condition in \eqref{Negcurv} simply ensures that $e^{\phi_{*}}$ is integrable in a neighborhood of each $z_k$ so that $e^{\phi_\ast(z)}|dz|^2$ defines indeed a compact metric on $\hat \bbC$. The second condition 
\begin{equation}\label{realNegcurv}
\sum_{k=1}^n\chi_k>4 
\end{equation}
is a negative curvature type condition. 
In the language of Riemannian geometry, the metric $e^{\phi_\ast(z)}|dz|^2$ has constant negative  Ricci scalar  curvature (as this is the only notion of curvature used in this paper, we simply refer to to it as curvature in the remainder of the text) equal to $-2\pi\Lambda$ on $\hat\C\setminus \{z_1,\dots,z_n\}$ (recall that the curvature of the metric at $z$ is given by $e^{-\phi_\ast(z)}\Delta_z \phi_\ast(z)$) and a conical singularity of order $\chi_k$ at $z_k$ for each $k=1,\dots, n$. Using standard integration by parts (on the Riemann sphere equipped with the round metric), it is possible to show that the solution $\phi_\ast$ to the system \eqref{Liouv}-\eqref{lcfasympt} must satisfy 
\begin{equation}\label{Gauss}
\sum_{k=1}^n\chi_k-4 =  \Lambda   \left (  \int_\C  e^{\phi_\ast(z)} \dd^2z \right ). 
\end{equation} 
where $ \dd^2z$ denotes the standard Lebesgue measure. Therefore $\sum_{k=1}^n\chi_k-4 $ and $\Lambda$ have same sign hence justifying the terminology \emph{negative curvature type condition}  for the condition \eqref{realNegcurv}. In a celebrated work \cite{rondalors}, Poincar\'e showed how to relate this metric to the problem of the uniformization of $\hat\C \setminus \lbrace z_1, \cdots, z_n \rbrace$\footnote{In fact, Poincar\'e considered the case $\chi_k=2$ for all $k$.}. More specifically, he introduced the $(2,0)$-component of the ``classical" stress-energy tensor     
\begin{equation}
T_{\phi_\ast}(z)=\partial^2_{zz}\phi_\ast(z)-\tfrac{1}{2}(\partial_z \phi_\ast (z))^2.
\end{equation}
Direct computations show that \eqref{Liouv} implies that $T_{\phi_\ast}$ is a meromorphic function on $\hat\C$ and then \eqref{lcfasympt} implies that it displays  second order poles at $z_1,\dots,z_n$. More precisely we must have
\begin{equation}\label{merostress}
T_{\phi_\ast}(z)=\sum_{k=1}^n\Big(\frac{\chi_k/2-\chi_k^2/8}{(z-z_k)^2}+\frac{c_k}{z-z_k}\Big)
\end{equation}
with  the asymptotics $T_{\phi_\ast}(z)= \mathcal{O}(z^{-4})$ as  $z\to\infty$  where the  real numbers $c_k$ are the so-called {\it accessory parameters}. Then, considering the second order  Fuchsian equation for holomorphic  functions on the universal cover (here the unit disk)
\begin{equation}\label{Fuchs}
\frac{d^2 u}{dz^2} + \frac{1}{2} T_{\phi_\ast}(z) u(z)= 0,
\end{equation}
 Poincar\'e showed \footnote{Recall that in fact  Poincar\'e considered  the case $\chi_k=2$ for all $k$.} that the ratio $f=u_1/u_2$ of two independent solutions $u_1,u_2$ solves the uniformization problem in the sense that the metric $e^{\phi_\ast(z)}|\dd z|^2$ is the pull-back of the hyperbolic metric on the unit disk by $f$, i.e. the following holds
\begin{equation}\label{pullback}
e^{\phi_\ast(z)} = \frac{4 |f'(z)|^2}{ \Lambda \pi (1-|f(z)|^2)^2}.
\end{equation}
In particular, if one normalizes $u_1,u_2$ to have Wronskian $w= u_1'u_2-u_1u_2'$ equal to $1$ then $e^{- \phi_\ast(z)/2}= \sqrt{\Lambda/8}(|u_2|^2-|u_1^2|)$. The factor $e^{- \phi_\ast (z)/2}$ thus solves the following PDE version of the Fuchsian equation\footnote{ The fact that $e^{- \phi_\ast (z)/2}$ solves the PDE version of the Fuchsian equation can also be seen by definition of $T_{\phi_\ast  }$.}
\begin{equation}\label{FuchsPDE}
\partial^2_{zz} (e^{- \phi_\ast (z)/2}) +\frac{1}{2}  T_{\phi_\ast}(z) e^{- \phi_\ast (z)/2}= 0.
\end{equation}    
Therefore, equations \eqref{Fuchs} and \eqref{pullback} provide a link between constant curvature metrics and the uniformization problem of Riemann surfaces. Finally, let us mention that Poincar\'e left open the problem of characterizing the   complex numbers $c_k$ in \eqref{merostress} in terms of $\phi_\ast$. 

\medskip

\noindent More than eighty years later, Polyakov and Zamolodchikov suggested\footnote{See Takhtajan's lecture notes \cite{Tak}.} the following identity for the parameters $c_k$
\begin{equation}\label{Polyacc}
c_k= -\frac{1}{2}\partial_{z_k}  S_{(\chi_k,z_k)}(\phi_\ast)\footnote{Let us stress here that the function $\phi_\ast$ also depends on the $(\chi_k,z_k)$ though for notational simplicity we keep this dependence implicit.}, 
\end{equation}
where the function $\phi\in \Theta_{(\chi_k,z_k)}\mapsto S_{(\chi_k,z_k)}(\phi)$ is a functional, called the (classical) Liouville action (with conical singularities),   defined on some functional space   $\Theta_{(\chi_k,z_k)}$ (a space of functions with logarithmic singularities of the form \eqref{lcfasympt}) and that  must be  formally understood as 
\begin{equation}\label{LAformal}
 S_{(\chi_k,z_k)}(\phi)=\frac{1}{4\pi}\int_\C (|\nabla_z\phi(z)|^2+4\pi \Lambda e^{\phi(z)}) \dd^2z-\sum_{k=1}^n\chi_k\phi(z_k) .
\end{equation}
Let us mention here that relation  \eqref{Polyacc} has already been proved in a rather simple way by Takhtajan-Zograf \cite{Takbis} based on geometrical considerations. Yet,  expression \eqref{LAformal} is ill-defined since the functions in $\Theta_{(\chi_k,z_k)}$ have logarithmic singularities at $z_k$ (just like the function $\phi_\ast$). In order to give the precise definitions of $\Theta_{(\chi_k,z_k)}$ and $S_{(\chi_k,z_k)}$, we introduce the round metric $g(z) |dz|^2$ on the Riemann sphere where $g(z)$ is given by
\begin{equation*}
g(z)=\frac{4}{(1+\bar zz)^2}
\end{equation*}
with associated Green kernel $G$ with vanishing mean on the sphere, i.e. $\int_\C G(x,.) g(x)\dd^2x=0$ (where $\dd^2x$ denotes the standard Lebesgue measure). If we consider the standard Sobolev space 
\begin{equation*}
H^1(\hat{\C})= \left\lbrace  h \; : \; \int_\C [|\nabla_z h(z)|^2+ |h(z)|^2 g(z) ] \dd^2z< \infty \right\rbrace,
\end{equation*}
where the norm is defined by $|h|_{H^1(\hat{\C})  }= \left ( \int_\C (|\nabla_z h(z)|^2+ |h(z)|^2 g(z) )\dd^2z \right ) ^{1/2}$, then $\Theta_{(\chi_k,z_k)}$ is given by 
\begin{equation}\label{defTheta}
\Theta_{(\chi_k,z_k)} := \lbrace  \phi= h+ \ln g+ \sum_{k=1}^n \chi_k G(z_k,.); h \in  H^1(\hat{\C})  \rbrace.
\end{equation}
We endow $\Theta_{(\chi_k,z_k)}$ with the metric space structure   induced by the $H^{1}$-norm $$ d_{\Theta}(\phi_1,\phi_2):= \|\phi_1-\phi_2 \|_{H^1(\bbC)}.$$ 
Now, for $\phi \in \Theta_{(\chi_k,z_k)}$, the action $S_{(\chi_k,z_k)}(\phi)$ is defined by a  limiting procedure where one applies a regularization procedure around the points $z_k$ with logarithmic singularity  and  add diverging counter terms. More precisely, $S_{(\chi_k,z_k)}(\phi):=\lim_{\epsilon\to 0}S_\epsilon(\phi)$ where 
\begin{equation}  \label{procedlimit}
  S_\epsilon (\phi)  
 := \frac{1}{4\pi}\left[\int_{\C_\epsilon}  ( |\nabla_z \phi(z) |^2 +4\pi \Lambda e^{\phi(z)}) \dd^2z + R(\gep,\phi) \right]  
 \end{equation}
 and 
 \begin{equation}
  R(\gep,\phi):=- 2i \sum_{k=1}^n  \chi_k \oint_{|z-z_k|=\epsilon}   \phi(z) \frac{\overline{\dd z}}{\bar{z}-\bar{z_k}}+ 8 i \oint_{|z|=\frac{1}{\epsilon}}   \phi(z) \frac{\overline{\dd z}}{\bar{z}} +  2\pi \sum_{k=1}^n \chi_k^2 \ln \frac{1}{\epsilon}+ 32 \pi \ln \frac{1}{\epsilon},
\end{equation}
where the integration domain in the first integral is defined by $\C_\epsilon:=\C \setminus \cup_{k=1}^n B(z_k,\epsilon) \cup \lbrace |z| > \frac{1}{\epsilon}\rbrace$ and  the contour integrals $\oint$ are oriented counterclockwise. 

The purpose of this paper is to relate via a semiclassical analysis the classical Liouville action  $S_{(\chi_k,z_k)}$ to the recent rigorous probabilistic construction of Liouville Conformal Field Theory (LCFT hereafter)  given by David-Kupianen and the last two authors in \cite{DKRV}. Recall that the construction of \cite{DKRV} is based on the Gaussian Free Field (GFF). Moreover, the local conformal structure of LCFT was studied in \cite{KRV} (Ward and BPZ identities), paving the way to a proof of the DOZZ formula for the three point correlation function \cite{KRV1}. A byproduct of the semiclassical analysis is a  new proof of relation \eqref{Polyacc} in the spirit of the way Polyakov and Zamolodchikov discovered it via non rigorous asymptotic expansions on path integrals. We believe that the connection between LCFT (in a probabilistic setting) and the classical Liouville action  \eqref{procedlimit} is interesting per se as it is not straightforward (see Takhtajan's lecture notes and discussion \cite{Tak, Tak1}).

The construction of LCFT is based on the {\it quantum} Liouville action, the quantum analog of  \eqref{LAformal} (recall that \eqref{LAformal} is a formal definition and the exact definition requires a regularization procedure). In order to introduce the quantum Liouville action, let us set the change of variable $\phi=\varphi+ \ln g $ in the formal definition of the classical action \eqref{LAformal}; we get up to a global constant that
\begin{align*}
 S'_{(\chi_k,z_k)}(\varphi) & : = S_{(\chi_k,z_k)}(\varphi+\ln g)\\& = \frac{1}{4\pi}\int (|\nabla_z\varphi(z)+\nabla_z \ln g(z) |^2+4\pi \Lambda e^{\varphi(z)} g(z)) \dd^2z-\sum_{k=1}^n\chi_k (\varphi(z_k)+\ln g(z_k))     \\
 &   =  \frac{1}{4\pi}\int_{\C}\big( |\nabla_z\varphi (z) |^2+ 2R_{g}(z) \varphi(z)  g(z) +4 \pi \Lambda e^{ \varphi(z) } g(z)\big)\dd^2 z -\sum_{k=1}^n\chi_k(\varphi(z_k)+ \ln g(z_k))    \\
\end{align*}
where in the last line we disregard a global constant (independent of the $\chi_k,z_k$) and we introduced the Ricci curvature of the round metric $R_{g}(z):=-\frac{1}{g(z)}\Delta_z \ln g(z)$ (which in the particular case of the Riemann sphere is constant and equal to $2$).
Let us perform for $\gamma>0$ the change of variables $\alpha_k= \frac{\chi_k}{\gamma}$, $\mu= \frac{\Lambda}{\gamma^2} $ and work with $\gamma \varphi$ instead of $\varphi$ in which case we get
\begin{align}
S'_{(\chi_k,z_k),\gamma}(  \varphi)  &: = \frac{1}{\gamma^2}S'_{(\chi_k,z_k)}( \gamma \varphi) \label{finaldefLioucla}  \\
   &  =  \frac{1}{4\pi  }\int_{\C}\big(  |\nabla_z  \varphi (z) |^2+ \frac{2}{\gamma} R_{g}(z)  \varphi(z)   g(z)+ 4\pi \mu e^{ \gamma \varphi(z) }  g(z)\big)\dd^2 z -\sum_{k=1}^n\alpha_k (\varphi(z_k)+\frac{1}{\gamma} \ln g(z_k))    \nonumber
\end{align}
The construction of LCFT is based on the ``quantization" of the above action. Following general principles in quantum field theory, the theory should correspond to constructing the measure $e^{- S'_{(\chi_k,z_k),\gamma}( \varphi)} D\varphi$  where $D\varphi$ is the ill-defined ``Lebesgue measure" on the space of functions\footnote{It is a well known fact that the Lebesgue measure does not exist on infinite dimensional spaces.}. Following a standard procedure in constructive field theory (see Barry Simon's book \cite{BarrySimon}), the probabilistic construction of LCFT is based on interpreting the measure $e^{-  \frac{1}{4 \pi  }\int_{\C}  |\nabla_z  \varphi (z) |^2 \dd^2z}   D \varphi$ as the Gaussian Free Field (GFF)\footnote{There is an important subtelty here in the interpretation of this quadratic term; indeed, one must not forget to incorporate the spatial average $\int_{\C} \varphi(z) g(z) \dd^2z$ of the field $\varphi$ (with respect to $g$) in the definition of the GFF measure. The correct measure on this average is the standard Lebesgue measure on $\R$: in physics, this average is called the zero mode.} measure and then expressing the other terms of the action as functionals of the GFF along the following factorization:
\begin{equation*}
e^{- S'_{(\chi_k,z_k),\gamma}(  \varphi)} D\varphi= \left ( e^{ \sum_k\alpha_k (\varphi(z_k)+\frac{1}{\gamma} \ln g(z_k))   } e^{-  \frac{1}{4\pi  }\int_{\C} (  \frac{2}{\gamma} R_{g}(z)  \varphi(z)  g(z) +4 \pi \mu e^{ \gamma \varphi(z) } g(z) )\dd^2 z}     \right )  e^{-  \frac{1}{4 \pi  }\int_{\C}  |\nabla_z  \varphi (z) |^2 \dd^2z}   D \varphi.
\end{equation*}   
However, the GFF is not a function but rather a Schwartz distribution hence the exponential term $\int_\C e^{ \gamma \varphi(z) } g(z) \dd^2 z$ is ill-defined. In order to make sense of the this term, a renormalization procedure is required and, in order to preserve the conformal invariance properties at the quantum level, one must modify the above action by replacing the classical value $\frac{2}{\gamma}$ in front of  $R_{g}(z)  \varphi(z)  g(z)$ by the \emph{quantum} value $Q:= \frac{2}{\gamma}+\frac{\gamma}{2}$ (see \cite{DKRV}). Of course, the extra correction term $\frac{\gamma}{2}$ vanishes when one considers the semiclassical regime $\gamma \to 0$. This leads to the following formal definition of the quantum Liouville action (where one removes the logarithmic singularities)
\begin{equation}\label{actionLiouvilleQ}
S(\varphi,g):= \frac{1}{4 \pi}\int_{\C}\big(|\nabla_z\varphi (z) |^2+ Q R_{g}(z) \varphi(z)  g(z) +4 \pi \mu e^{\gamma \varphi(z) }  g(z)\big)\dd^2 z   .
\end{equation}
where $\gamma$ is a positive parameter belonging to  $(0,2)$, $Q=\frac{\gamma}{2}+\frac{2}{\gamma}$ and $\mu>0$ is a positive parameter called the cosmological constant. As we will see shortly, the quantum Liouville measure $e^{- S(\varphi,g)} D\varphi$ is in fact defined on the dual space $H^{-1}(\hat\C)$ of  $H^{1}(\hat\C)$, which is defined as the completion of the set of smooth functions on $\hat \C$ with respect to the following norm for $f$ smooth
\begin{equation*}
|f|_{H^{-1}(\hat\C)}=  \sup_{h \in H^{1}(\hat \C), \: |h|_{H^1(\hat{\C})  } \leq 1} \left |    \int_\C    f(x) h(x)  g(x) \dd^2x  \right |  . 
\end{equation*}
The quantum Liouville theory is then defined by its functional expectation (called path integal in the physics literature)
\begin{equation}\label{Liouvmeasintro}
\langle F\rangle_{\gamma,\mu} =\int_{H^{-1}(\hat\C)}F(\phi)e^{-S(\varphi,g)}D\varphi
\end{equation}
for every continuous function $F$ on $H^{-1}(\hat\C)$ and where $\phi= \varphi+\frac{Q}{2} \ln g$ is the {\bf Liouville field}. The main observables in LCFT 
are the correlations of the fields $V_\alpha(z)= e^{\alpha \phi (z)}$ under the measure \eqref{Liouvmeasintro}. For $\alpha_1, \cdots, \alpha_n$  and $F$ any  bounded measurable functional, we set 
\begin{equation*}
\langle F(\phi)  \prod_{k=1}^n V_{\alpha_k}(z_k)   \rangle_{\gamma,\mu}:=  \int_{H^{-1}(\hat\C)} F(\phi)   \prod_{k=1}^n V_{\alpha_k}(z_k)    e^{-S(\phi,g)}D\phi .
\end{equation*}
We will recall the rigorous probabilistic definition in the next subsection; the case $F=1$ corresponds to the correlations. Let us just mention that inserting quantities like $V_{\alpha_k}(z_k)$ correspond at the quantum level to adding a logarithmic singularity to $\phi$ at the point $z_k$ and with weight $\alpha_k$ (this a consequence of the classical Girsanov theorem of probability theory); this property is of course to be expected from the previous discussion at the classical level.

Now, gathering the above considerations, the semiclassical regime is the limit of LCFT when $\gamma$ goes to $0$ with $\alpha_k=\frac{\chi_k}{\gamma}$ and $\mu= \frac{\Lambda}{\gamma^2}$ for fixed $\Lambda$ and $\chi_k$; it is natural to expect that the following semi-classical limit holds when $F$ is a continuous function on $H^{-1} ( \hat\C)$  
\begin{equation}\label{Liouvmeasintrosemicl}
\langle F(\gamma \phi)  \prod_{k=1}^n V_{\alpha_k}(z_k)   \rangle_{\gamma,\mu}\underset{\gamma \to 0}{\sim} C e^{- \frac{ S_{(\chi_k,z_k)}(\phi_\ast)}{\gamma^2}} F(\phi_\ast)
\end{equation}
where $\phi_\ast$ solves the Liouville Equation  \eqref{Liouv} with logarithmic singularities \eqref{lcfasympt} and $C>0$ is some constant. One of the main results of this paper is to show that this is indeed the case (see Proposition \ref{asymppart} below). Once statement \eqref{Liouvmeasintrosemicl} has been established rigorously, following the idea of Polyakov and Zamolodchikov, it is natural to exploit the above semiclassical limit to recover the accessory parameters by using the BPZ equations\footnote{The argument  of Polyakov and Zamolodchikov was in fact based on the stress-energy tensor of LCFT but this is a minor point.}. The BPZ differential equations are the quantum analogues of \eqref{FuchsPDE}; it was shown in \cite{KRV} that the following BPZ differential equation holds for the field $V_{- \frac{\gamma}{2}}(z)$
\begin{align}
  \frac{4}{\gamma^2}\partial_{zz}^2\langle   V_{- \frac{\gamma}{2}}(z) \prod_{l=1}^n V_{\alpha_l}(z_l)   \rangle_{\gamma,\mu}   & + \sum_{k=1}^n \frac{\Delta_{\alpha_k}}{(z-z_k)^2}  \langle  V_{- \frac{\gamma}{2}}(z) \prod_{l=1}^n  V_{\alpha_l}(z_l)  \rangle_{\gamma,\mu}   \nonumber \\ &   + \sum_{k=1}^n \frac{1}{z-z_k}  \partial_{z_k} \langle  V_{- \frac{\gamma}{2}}(z) \prod_{l=1}^n  V_{\alpha_l}(z_l)  \rangle_{\gamma,\mu}    
  =  0  , \label{BPZ}
\end{align}
 where $\Delta_\alpha= \frac{\alpha}{2}  (Q-\frac{\alpha}{2})$ is called the conformal weight of $V_\alpha(z)$ (while $\Delta$ is also used for the Laplacians, our use of this notation should not yield any ambiguity). Since the BPZ differential equations are the quantum analogues of the classical equation \eqref{FuchsPDE}, one should recover \eqref{FuchsPDE} by taking the semiclassical limit $\gamma \to 0$ (recall that $\alpha_k=\frac{\chi_k}{\gamma}$ and $\mu= \frac{\Lambda}{\gamma^2}$ for fixed $\Lambda$ and $\chi_k$). Indeed, exploiting \eqref{Liouvmeasintrosemicl} and the convergence $\gamma^2 \Delta_{\alpha_k} \underset{\gamma \to 0}{\rightarrow} \chi_k-\chi_k^2/4$ one gets asymptotically for small $\gamma$
\begin{align*} 
  0  & = \partial_{zz}^2\langle   V_{- \frac{\gamma}{2}}(z) \prod_{l=1}^n V_{\alpha_l}(z_l)   \rangle_{\gamma,\mu}    +  \frac{\gamma^2}{4} \sum_{k=1}^n \frac{\Delta_{\alpha_k}}{(z-z_k)^2}  \langle  V_{- \frac{\gamma}{2}}(z) \prod_{l=1}^n  V_{\alpha_l}(z_l)  \rangle_{\gamma,\mu}   \\
  &  \quad \quad \quad \quad \quad \quad \quad \quad \quad \quad \quad \quad \quad \quad \quad \quad  \quad \quad \quad \quad \quad + \frac{\gamma^2}{4}  \sum_{k=1}^n \frac{1}{z-z_k}  \partial_{z_k} \langle  V_{- \frac{\gamma}{2}}(z) \prod_{l=1}^n  V_{\alpha_l}(z_l)  \rangle_{\gamma,\mu}     \\
 & 
= \partial_{zz}^2  \left (   e^{-\frac{\phi_\ast}{2}(z)}  e^{- \frac{ S_{(\chi_l,z_l)}(\phi_\ast)}{\gamma^2}} \right )   + \sum_{k=1}^n \frac{\chi_k/4-\chi_k^2/16}{(z-z_k)^2}      e^{-\frac{\phi_\ast}{2}(z)}  e^{- \frac{ S_{(\chi_l,z_l)}(\phi_\ast)}{\gamma^2}} \\
&\quad \quad \quad \quad \quad \quad \quad \quad \quad \quad \quad \quad \quad \quad \quad \quad  \quad \quad \quad \quad \quad+\frac{\gamma^2}{4}  \sum_{k=1}^n \frac{1}{z-z_k}  \partial_{z_k} \left (  e^{-\frac{\phi_\ast}{2}(z)} e^{- \frac{ S_{(\chi_l,z_l)}(\phi_\ast)}{\gamma^2}} \right )   +o(1)\\
& = e^{- \frac{ S_{(\chi_l,z_l)}(\phi_\ast)}{\gamma^2}} \left ( \partial_{zz}^2    e^{-\frac{\phi_\ast}{2}(z)}      + \sum_{k=1}^n \frac{\chi_k/4-\chi_k^2/16}{(z-z_k)^2}      e^{-\frac{\phi_\ast}{2}(z)} + \frac{1}{4} \sum_{k=1}^n \frac{ \partial_{z_k} S_{(\chi_l,z_l)}(\phi_\ast)  }{z-z_k} e^{-\frac{\phi_\ast}{2}(z)}   \right )   +o(1).
\end{align*}
This leads to the desired relation \eqref{Polyacc}: this heuristic derivation can be made rigorous and is the content of Corollary \ref{CorPolyakov}. The main reason why the above derivation is not an immediate consequence of  \eqref{Liouvmeasintrosemicl} is for regularity reasons; more specifically, one must justify that one can differentiate equivalence \eqref{Liouvmeasintrosemicl}. 

  Now, we proceed with the statement of the main results of this paper as well as related open problems. In order to do so, we first recall the probabilistic definition of LCFT.

\section{Main results}

\subsection{Background and notations}\label{sec:backgr}
 
In this section, we recall the precise definition of the Liouville action and LCFT as given in  \cite{DKRV}.

\subsubsection*{Convention and notations.}

In what follows, in addition to the complex variable $z$, we will also consider variables $x,y$ in $\C$ and for integer $n \geq 3$ variables  $z_1, \cdots, z_n$ which also belong to $\C$. 

The variables $x,y$ (and sometimes $z$) will typically  be variables of integration: we will denote by $\dd^2x$ and $\dd^2y$ (and $\dd^2z$) the corresponding Lebesgue measure on $\C$ (seen as $\R^2$). We will also denote $|\cdot|$ the norm in $\C$ of the standard Euclidean (flat) metric and for all $r>0$ we will denote by $B(x,r)$ the Euclidean ball of center $x$ and radius $r$. 

\subsubsection*{LCFT on  $\hat{\C}$}  
 
  To define the measure  \eqref{Liouvmeasintro} it is natural to start with the quadratic part of the action functional  \eqref{actionLiouvilleQ}  which naturally gives rise to a Gaussian measure, the Gaussian Free Field (GFF) (we refer to \cite[Section 4]{dubedat} or \cite{She07} for an introduction to the topic). As is well known the GFF on the plane is defined modulo a constant but in LCFT this constant has to be included as an integration variable in the measure \eqref{Liouvmeasintro}. The way to proceed is to replace $\varphi$ in \eqref{actionLiouvilleQ}  by $c+X$  where $c\in\R$  is integrated w.r.t to Lebesgue measure  and $X$ is the Gaussian Free Field on $\C$ centered with respect to the round metric, i.e. which satisfies $\int_\C X(x) g(x)\dd^2x=0$ for $g(x)= \frac{4}{(1+|x|^2)^2}$. The covariance of $X$\footnote{The field $X$ was denoted $X_{g}$ in the article \cite{DKRV} or the lecture notes \cite{RV}.} is given explicitly for $x,y \in \C$ by 
\begin{equation}\label{hatGformula}
\E [ X(x)X(y)] = G(x,y)=\ln\frac{1}{|x-y|}-\frac{1}{4}(\ln g(x)+\ln  g(y))+\kappa
\end{equation}
where $\kappa:=\ln 2-\frac{1}{2}$.

\subsubsection*{Gaussian multiplicative chaos}
The field $X$ is distribution valued and to define its exponential a renormalization procedure is needed. We will work with a mollified regularization of the GFF, namely $X_{\epsilon}=\rho_\epsilon \ast  X   $ with $\rho_\epsilon(x)= \frac{1}{\epsilon^2}\rho (\frac{|x|^2}{\epsilon^2})$ where $\rho$ is $C^{\infty}$ non-negative with compact support in $[0,\infty[$ and such that $\pi \int_0^\infty \rho(t) \dd t=1$.  The variance of  $X_{\epsilon}(x)$ satisfies
 \begin{equation}\label{circlegreen}
\lim_{\epsilon\to 0} \: (\E[X_{\epsilon} (x)^2]+\ln (a\epsilon))
=-\frac{1}{2}\ln g(x)
\end{equation} 
uniformly on $\C$ where the constant $a$ depends on the regularization function $\rho$. 
Define the measure 
\begin{equation}\label{Meps1}
M_{\gamma,\epsilon}(\dd^2x):= e^{\frac{\gamma^2}{2}\kappa}
:e^{\gamma X_{\epsilon}(x)}: g(x) \dd^2x.
\end{equation}
where we have used the Wick notation for a centered Gaussian random variable
$:e^Z:= e^{Z-\frac{1}{2}\bbE [Z^2]}$ (see Section \ref{wick}).
While the factor  $e^{\frac{\gamma^2}{2}\kappa}$ plays no role, this 
normalization of $M_{\gamma,\epsilon}(\dd^2x)$ has been chosen to match the standards of the physics literature.
For $\gamma\in [0,2)$,   this sequence of measures converges  in probability  in the sense of weak convergence of measures
\begin{equation}\label{law}
M_{\gamma}=\lim_{\epsilon\to 0}M_{\gamma,\epsilon}.
 \end{equation}
This limiting measure is non trivial and is (up to the multiplicative constant $  e^{\frac{\gamma^2}{2}\kappa} $) Gaussian multiplicative chaos (GMC) of the field $X$ with respect to the measure $ g(x)\dd^2 x$ (see Berestycki's paper \cite{Ber} for an elementary approach and references). 

 \subsubsection*{Liouville measure } The Liouville measure $e^{-S(\varphi,g)}D\varphi$ with $S$ given by \eqref{actionLiouvilleQ}  is now defined as follows. Since $R_{ g}=2$ and $\int_\C X(x)  g(x) \dd^2x=0$ the linear term becomes
$
\int_\C(c+X(x))  g (x) \dd^2x=4\pi c
$.  This leads to the
definition (recall that $\mu>0$ is a fixed parameter)
\begin{equation}\label{Liouvillemeasure}
 \nu(dX,dc):=e^{-2Q c}     e^{- \mu e^{\gamma c} M_{\gamma}(\C)    }\P(\dd X)   \: \dd c.
\end{equation}
Here $\P(\dd X)$ denotes the Gaussian Free Field probability measure on $H^{-1}(\hat \C)$. Note that the random variable $M_{\gamma}(\C) $ is almost surely finite and that  $\E [M_{\gamma}(\C)] =e^{\frac{\gamma^2}{2}\kappa}\int_\C   g(x) \dd^2x<\infty
$. This implies that the total mass of the measure $\nu$ is  infinite  since for $M_{\gamma}(\C) <\infty$ the $c$-integral diverges at $-\infty$. While we have formally defined $\nu$ as a measure on $H^{-1}(\hat \C)\times \bbR$, we are solely interested in the behavior of the  \textit{Liouville field}, defined by
\begin{equation}\label{Liouvillefield}
\phi:=X+\frac{Q}{2}\ln g+c.  
\end{equation}
Note that by \eqref{circlegreen}, the term appearing in the exponential in \eqref{Liouvillemeasure} can be expressed as 
 \begin{equation}\label{Meps}
e^{c\gamma} M_{\gamma}(d^2x)=\lim_{\epsilon\to 0}(A\epsilon)^{\frac{\gamma^2}{2}}e^{\gamma \phi_\epsilon(x)}  \dd^2x
\end{equation}
for $A=ae^\kappa$ and $\phi_\epsilon:=\rho_\epsilon\ast\phi$ being the smoothened version of $\phi$. 
We denote 
averages with respect to $\nu$ by $\langle\,\cdot\,\rangle_{\gamma, \mu}$

\subsubsection*{Liouville correlation functions.} The vertex operators $V_\alpha(z)=e^{\alpha \phi(z)}$  need to be regularized as well.
Equation \eqref{Meps1} and \eqref{Meps} provide several ways of regularizing which end up to be equivalent when considering limits. 
For vertex operators, we introduce for $\alpha\in\R$ and $z\in\C$
\begin{equation}\label{Vdefi}
V_{\alpha, \epsilon}(z):=e^{\alpha c} e^{\frac{\alpha^2 \kappa}{2}}:e^{\alpha X_{\epsilon}(z) }: g (z) ^{\Delta_\alpha}.
\end{equation}
where recall that $\Delta_\alpha= \frac{\alpha}{2} (Q-\frac{\alpha}{2})$. Let us fix $z_1,\dots, z_n$, $n$ distincts points in $\bbC$ and associated
 weights $\alpha_1,\dots,\alpha_n$ in $\bbR$. Here and below we use the notation $\langle\,\cdot\,\rangle_{\gamma, \mu,\epsilon} $ for the regularized Liouville measure where in \eqref{Liouvillemeasure} we replace  $M_{\gamma}(\dd^2 x)$ by $M_{\gamma, \gep}(\dd^2 x)$.  
Now, it was shown in \cite{DKRV} that the  limit
\begin{equation}\label{lcorre}
 \langle \prod_{k=1}^n  V_{\alpha_k}(z_k) \rangle_{\gamma, \mu} :=  4 e^{-2\kappa Q^2}\: \underset{\epsilon \to 0}{\lim}  \:  \langle \prod_{k=1}^n  V_{\alpha_k,\epsilon}(z_k)  \rangle_{\gamma, \mu,\epsilon}\footnote{The global constant $4 e^{-2\kappa Q^2}$ which depends on $\gamma$ plays no role but it is included to match with the standard  physics literature which is based on the celebrated DOZZ formula. This constant was not included in the definitions in \cite{DKRV} or \cite{RV}.}
\end{equation}
exists and is finite if and only if $ \sum_{k=1}^n\alpha_k>2Q$.  Moreover, under this condition, the limit is non zero  if and only if $\alpha_k<Q$ for all $k$. The conditions 
\begin{equation}\label{TheSeibergbounds}
 \sum_{k=1}^n\alpha_k>2Q, \quad \quad \forall k, \: \alpha_k<Q
\end{equation}
are the Seiberg bounds  originally introduced in \cite{seiberg}. These two conditions are the quantum equivalent of those presented in \eqref{Negcurv}. This is really transparent in the semiclassical regime  where we fix $\alpha_k= \frac{\chi_k}{\gamma}$ for fixed $\chi_k$'s and $\gamma$ goes to $0$; in the limit, one gets $\chi_k  \leq 2$ and $ \sum_{k=1}^n \chi_k  \geq 4$. With the exception of Section \ref{ops} on open problems, we assume throughout this paper that the Seiberg bounds  \eqref{TheSeibergbounds} are satisfied and (without entailing any restriction since $V_0(z)=1$) that $\alpha_k \neq 0$ for all $k$.  Note that this also implies that $n \geq 3$.  

%

 \subsubsection*{Reduction to Multiplicative Chaos.} In order to keep this work as self contained as possible, we remind the basics of the construction of the Liouville correlations. The main idea is that one can express these correlations as functions of GMC measures with log singularities. As a first step using the explicit expression  \eqref{Vdefi},
 and the change of variable $a=\gamma c+\log M_{\gamma, \gep}(\bbC) $
 we can factorize the respective role of $X$ and the constant. Setting 
 \begin{equation}\label{defs}
 s:=\frac{\sum_{k=1}^n \alpha_k-2Q}{\gamma}
 \end{equation}
this yields (see  \cite{DKRV} for more details):
  \begin{multline}\label{lachange}
   \langle \prod_{k=1}^n  V_{\alpha_k}(z_k)  \rangle_{\gamma,\mu} = 4 \: e^{-2\kappa Q^2}\:  \lim_{\epsilon\to 0} \int_{\R}   e^{-2Q c}  \:   \E \left [  \prod_{k=1}^n  V_{\alpha_k,\epsilon}(z_k)        e^{- \mu e^{\gamma c}  M_{\gamma, \epsilon} (\bbC)    } \right ]   \dd c\\
 =   4 \: e^{\frac{\kappa}{2}\sum_{k=1}^n\alpha_k^2-2\kappa Q^2}\gamma^{-1}\left(\int_{\bbR} e^{-\mu e^ a} e^{as} \dd a\right) \lim_{\epsilon\to 0}\E  \left [  \prod_{k=1}^n :e^{{\alpha_k} X_\epsilon(z_k)}:   g (z_k)^{\Delta_{\alpha_k}}    \left ( M_{\gamma, \epsilon} (\bbC)   \right )   ^{-s}    \right ].
\end{multline}
Since  \eqref{TheSeibergbounds} ensures that $s>0$, making a  change of variables $A=\mu e^a$ in the integral $\int_{\bbR} e^{-\mu e^ a} e^{as} \dd a$ turns it into $\mu^{-s}\Gamma(s)$  where $\Gamma$ is Euler's $\Gamma$ function: $\Gamma(s)=\int_0^\infty A^{s-1} e^{-A} \dd A$.
Now  using Girsanov's theorem  (see  \cite{DKRV}) we may trade $:e^{{\alpha_k} X_\epsilon(z_k)}:$ in the expectation for a shift of $X$.
Then setting 
\begin{equation}\label{defz0}
Z_0:= \int_{\C}  e^{\gamma \sum_{k=1}^n \alpha_k G(z_k,x) -\frac{\gamma^2\kappa}{2}}  M_\gamma (d^2x)
=\lim_{\epsilon \to 0} \int_{\C}  e^{\gamma \sum_{k=1}^n \alpha_k G(z_k,x)} :e^{\gamma X_{\epsilon} (x)}:   g(x) \dd^2 x ,
\end{equation}
we end up with the expression
\begin{equation}\label{funamental}
 \langle    \prod_{k=1}^n V_{\alpha_k}(z_k)  \rangle_{\gamma,\mu}   =  K({\bf z}) \mu^{-s}\Gamma(s)    \E \left [  Z_0^{-s}  \right ]
\end{equation}
where for ${\bf z}=(z_1, \dots, z_n)$
\begin{equation}\label{Kdef}
K({\bf z}):=4 \gamma^{-1} e^{-\frac{s\kappa \gamma^2}{2}}\left ( \prod_{k=1}^n g(z_k)^{\Delta_{\alpha_k}}  \right ) e^{ \frac{1}{2}\sum_{k \not =j}\alpha_k\alpha_{j}G(z_k,z_{j})+\frac{\kappa}{2} \sum_{k=1}^n \alpha_k^2-2\kappa Q^2}.
\end{equation}
Thus, the Liouville correlations can be expressed in terms of the negative moments of the random variable $Z_0$. In particular, the Seiberg bounds $\alpha_k<Q$ for all $k$ are the condition of integrability of $e^{\gamma \sum_{k=1}^n \alpha_k G(z_k,x)}$ against the chaos measure  $M_\gamma(\dd^2x)$ (see  \cite{DKRV}). 
We recall the following result on the BPZ equations proved in \cite{KRV}:
\begin{theorem}[Theorem 2.2 in \cite{KRV}]\label{BPZTH} Suppose $ -\frac{\gamma}{2}+\sum_{k=1}^n\alpha_k>2Q$. Then the BPZ equation \eqref{BPZ} holds 
 in $\mathbb{C} \setminus  \lbrace  z_1, \cdots, z_n \rbrace$.
\end{theorem}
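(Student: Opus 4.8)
Since this is a result of \cite{KRV}, I only outline the strategy. Treating $V_{-\frac{\gamma}{2}}(z)$ as an additional vertex operator, the Girsanov reduction leading to \eqref{funamental} applies verbatim: with $\tilde s:=\frac{1}{\gamma}\big(-\frac{\gamma}{2}+\sum_{k=1}^n\alpha_k-2Q\big)$ and $\tilde Z_0:=\int_{\C}e^{-\frac{\gamma^2}{2}G(z,x)}\,e^{\gamma\sum_k\alpha_kG(z_k,x)-\frac{\gamma^2\kappa}{2}}M_\gamma(\dd^2x)$ one gets $\langle V_{-\frac{\gamma}{2}}(z)\prod_kV_{\alpha_k}(z_k)\rangle_{\gamma,\mu}=\tilde K\,\mu^{-\tilde s}\,\Gamma(\tilde s)\,\E[\tilde Z_0^{-\tilde s}]$, where $\tilde K$ is the analogue of $K(\mathbf z)$ in \eqref{Kdef} for the family $(-\tfrac{\gamma}{2},z),(\alpha_1,z_1),\dots,(\alpha_n,z_n)$, real-analytic in $z$ on $\C\setminus\{z_1,\dots,z_n\}$. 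The hypothesis $-\frac{\gamma}{2}+\sum_k\alpha_k>2Q$ is exactly the condition $\tilde s>0$ that makes this correlation finite; since $-\gamma/2<Q$ automatically, the Seiberg bounds \eqref{TheSeibergbounds} hold for the enlarged family, so $\tilde Z_0<\infty$ a.s. and all negative moments $\E[\tilde Z_0^{-\tilde s-j}]$, $j\ge0$, are finite. The plan is then to combine a conformal Ward identity with the level-two degeneracy of $V_{-\gamma/2}$.

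\emph{Step 1: conformal Ward identity.} I would introduce the quantum stress--energy tensor $T(y)$ as the renormalized $\epsilon\to0$ limit of $Q\,\partial_y^2\phi_\epsilon(y)-\big((\partial_y\phi_\epsilon(y))^2-\E[(\partial_yX_\epsilon(y))^2]\big)$ and prove that, for $y\notin\{z,z_1,\dots,z_n\}$,
\[
\Big\langle T(y)V_{-\frac{\gamma}{2}}(z)\prod_kV_{\alpha_k}(z_k)\Big\rangle_{\gamma,\mu}=\mathcal D_y\,\Big\langle V_{-\frac{\gamma}{2}}(z)\prod_kV_{\alpha_k}(z_k)\Big\rangle_{\gamma,\mu},\qquad \mathcal D_y:=\frac{\Delta_{-\gamma/2}}{(y-z)^2}+\frac{\partial_z}{y-z}+\sum_{k=1}^n\Big(\frac{\Delta_{\alpha_k}}{(y-z_k)^2}+\frac{\partial_{z_k}}{y-z_k}\Big).
\]
This is a Gaussian integration by parts: one applies to the underlying GFF the infinitesimal reparametrization $y\mapsto y+\eta\,v(y)$ with $v$ meromorphic with poles at the insertions; the $\Delta_\bullet/(y-\bullet)^2$ terms then come from the conformal weights of the vertex operators and from the $Q\,\partial^2\phi$ counterterm in $T$, and the $\partial_\bullet/(y-\bullet)$ terms from the displacement of the insertion points. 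The real work here is that the identity survives the $\epsilon\to0$ limit, which requires GMC moment bounds uniform in $\epsilon$.

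\emph{Step 2: degeneracy of $V_{-\gamma/2}$, and conclusion.} Since $2\Delta_{-\gamma/2}+1=-\tfrac{3\gamma^2}{8}$, the state created by $V_{-\frac{\gamma}{2}}$ carries a level--two null vector, which at the level of correlations should read
\[
\oint_{|y-z|=\delta}\frac{\dd y}{2\pi i}\,\frac{1}{y-z}\,\Big\langle T(y)V_{-\frac{\gamma}{2}}(z)\prod_kV_{\alpha_k}(z_k)\Big\rangle_{\gamma,\mu}=-\frac{4}{\gamma^2}\,\partial_z^2\,\Big\langle V_{-\frac{\gamma}{2}}(z)\prod_kV_{\alpha_k}(z_k)\Big\rangle_{\gamma,\mu}.
\]
This is the step where the precise value $\alpha=-\gamma/2$ genuinely enters: starting from the GMC representation one expands $\langle T(y)V_{-\frac{\gamma}{2}}(z)\cdots\rangle$ about $y=z$ and checks that the coefficient of $(y-z)^{-1}$ equals $-\tfrac{4}{\gamma^2}\partial_z^2$ of the correlator, the bookkeeping being handled by differentiating the $z$-dependent factor $\tilde K\,\E[\tilde Z_0^{-\tilde s}]$ twice in $z$ and using partial fractions such as $\frac{1}{(z-z_k)(z_k-x)}=\frac{1}{z-x}\big(\frac{1}{z_k-x}+\frac{1}{z-z_k}\big)$ to recombine the resulting singular integrals. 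To finish, note that the circle $|y-z|=\delta$ encloses only $z$: inserting the Ward identity, the terms $\Delta_{-\gamma/2}(y-z)^{-3}$ and $\partial_z(y-z)^{-2}$ contribute no residue, while the part of $\mathcal D_y$ holomorphic at $y=z$ contributes its value there, $\sum_k\big(\Delta_{\alpha_k}(z-z_k)^{-2}+(z-z_k)^{-1}\partial_{z_k}\big)$ applied to the correlator; equating this with the right--hand side above is precisely \eqref{BPZ}.

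The skeleton above is classical conformal field theory; the main obstacle is entirely analytic. Turning each formal step into a rigorous limiting statement forces sharp estimates on Gaussian multiplicative chaos --- negative moments of $\tilde Z_0$, and moments of $\tilde Z_0$ paired with the singular integrals produced by the $z$--derivatives when an integration point $x$ collides with $z$ or with some $z_k$ (``fusion'' estimates) --- together with a justification that the correlation may be differentiated twice in $z$ under the expectation and that this commutes with the $\epsilon\to0$ limit. It is in establishing these bounds that the subcriticality $\gamma\in(0,2)$ and the condition $-\frac{\gamma}{2}+\sum_k\alpha_k>2Q$ are really used, and this is the technical heart of \cite{KRV}.
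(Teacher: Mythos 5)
This statement is not proved in the present paper: it is quoted verbatim as Theorem 2.2 of \cite{KRV}, so there is no internal argument to compare your proposal against. Your sketch is a faithful high-level account of the strategy actually followed in \cite{KRV} --- the Ward identity for the stress--energy tensor obtained by Gaussian integration by parts, the level-two degeneracy of $V_{-\gamma/2}$, and the GMC negative-moment and fusion estimates needed to pass to the $\epsilon\to 0$ limit and differentiate twice under the expectation --- but it remains a skeleton, and every step you flag as the real work is precisely the technical content of that reference and is not carried out here.
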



\noindent Similarly  to the correlation  $\langle \prod_{k=1}^n  V_{\alpha_k}(z_k)  \rangle_{\gamma,\mu}$, we can also define a limit for the sequence of probability measures induced by the multiplication by vertex operators. Setting
\begin{equation}\label{vertexcorr3}
\P_{\mu, (\alpha_k, z_k),\epsilon}:=   \frac{1}{   \langle \prod_{k=1}^n  V_{\alpha_k,\gep}(z_k)  \rangle_{\gamma,\mu,\epsilon} } \prod_{k=1}^n V_{\alpha_k,\epsilon}(z_k)\, \nu (\dd X,\dd c).
\end{equation}
we have (in the topology of weak convergence of measures on  $H^{-1}(\bbC)\times \bbR$)
\begin{equation}\label{vertexcorr4}
\lim_{\epsilon\to 0}\P_{\mu, (\alpha_k, z_k),\epsilon}=\P_{\mu,(\alpha_k, z_k)}.
\end{equation}
Under the probability measure $\P_{\mu,(\alpha_k, z_k)}$ (with expectation denoted by $\E_{\mu,(\alpha_k, z_k)}$), the distribution of the Liouville field \eqref{Liouvillefield} is given by (for any continuous bounded function $F: H^{-1}(\bbC) \to \bbR$)
\begin{multline}
 \E_{\mu,(\alpha_k, z_k)} [F(\phi)] 
\\ = 
\int_\bbR  \frac{ \E\left[  F\left( X+\sum_{k=1}^n\alpha_k G(z_k,.) +\frac{Q}{2} \ln g  +  \frac{a-\ln Z_0}{\gamma}- \frac{\gamma\kappa}{2}  \right)  Z_0^{-s}    \right]}{\bbE[Z^{-s}_0]} \,    \frac{ \mu^s e^{a s} e^{-\mu e^a}}{\gG(s)} \dd a .\label{defLiouvillefield}
\end{multline}  
where recall that in this expression $s=\frac{\sum_{k=1}^n \alpha_k-2Q}{\gamma}$. This formula is obtained using the same change of variables performed in \eqref{lachange}.

\begin{remark}
 Note that we have formally defined  $\P_{\mu,(\alpha_k, z_k)}$ as a probability on $H^{-1}(\bbC) \times \R$ and  we have 
 \begin{equation}
 \E_{\mu,(\alpha_k, z_k)} [F(X,c)] 
 = \frac{ \gamma K({\bf z})}{\langle    \prod_{k=1}^n V_{\alpha_k}(z_k)  \rangle_{\gamma,\mu} }
\int_\bbR  \E\left[  F\left( X+\sum_{k=1}^n\alpha_k G(z_k,.), c \right) e^{-\mu e^{c\gamma +\frac{\gamma^2 \kappa}{2}}Z_0}  \right]\dd c 
 \end{equation}
but this expression is of lesser interest to us compared to  \eqref{defLiouvillefield} since only $\phi$ has a physical interpretation.

\end{remark}


%

\subsection{The semiclassical limit: statement of the main results}

In this section, we state the main results of the paper which are derived in the semiclassical regime. In this regime we
fix the values of $(\chi_k)_{k=1}^n$ satisfying \eqref{Negcurv} (recall that this implies $n \geq 3$)  and $\Lambda>0$ and 
set 
\begin{equation}\label{regime}
 \mu= \frac{\Lambda}{\gamma^2}  \quad \text{ and }  \quad  \forall k, \quad \alpha_k=\frac{\chi_k}{\gamma}
\end{equation}
and  we let $\gamma$ tend to $0$. 

\subsubsection*{The semiclassical limit of the correlation functions}

We introduce the following constant (depending on the $\chi_k$)
\begin{equation}\label{Theconstant}
C_{\star}((\chi_k))=2  \ln 4+ 2  \sum_{k=1}^n \chi_k  ( \frac{1}{2} \ln 2+\kappa)-\frac{1}{2} \kappa \sum_{k=1}^n \chi_k^2+\frac{1}{8 \pi } ( 4-\sum_{k=1}^n \chi_k)  \int_{\C}    g(z) \ln   g(z)  \dd^2z.
\end{equation}

Our  first achievement is to obtain sharp asymptotics for the correlation function (we direct the reader to Section \ref{wick} for details about the the Wick notation $:X^2:$).
Recalling that $\phi_*$ is the solution of \eqref{Liouv}-\eqref{lcfasympt} we set 
\begin{equation}\label{mustar}
\mu^*(\dd^2 x)=  \frac{\gL}{\sum_{k=1}^n \chi_k-4} e^{\phi_*(x)} \dd^2 x.
\end{equation}
Notice that $\mu^*$ is a probability measure on $\C$ thanks to \eqref{Gauss}.

\begin{proposition}\label{asymppart}
In the regime \eqref{regime}, we have
\begin{multline*}
 \langle \prod_{k=1}^n  V_{\alpha_k}(z_k)  \rangle_{\gamma,\frac{\Lambda}{\gamma^2} } \stackrel{\gamma \to 0}{\sim} 4 
 \sqrt{2\pi}  \frac{\Lambda}{(\sum_{k=1}^n \chi_k-4)^{3/2}} e^{\frac{\frac{\kappa}{2} \sum_{k=1}^n \chi_k^2-8 \kappa +C_{\star}((\chi_k))}{\gamma^2}} e^{-\frac{S_{(\chi_k,z_k)}(\phi_\ast)}{\gamma^2}}    \\
 \times e^{-\frac{\kappa}{2}(\sum_{k=1}^n \chi_k-4 )}   \left (e^{2(\ln 2 -1)} \, \int_\C e^{\phi_*(x)-\frac{1}{4\pi}\phi_*(x)} \dd^2 x   \right ) \E\left[   e^{   \frac{\sum_{k=1}^n \chi_k-4}{2}  \left[\left(\int_\C X(x) \mu^*(\dd^2x)   \right)^2 - \int_\C :X(x)^2: \mu^*(\dd^2x)   \right]     }      \right]   
\end{multline*}
\end{proposition}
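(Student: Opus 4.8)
The starting point is the exact reduction to multiplicative chaos \eqref{funamental}: in the regime \eqref{regime},
\[
\langle \textstyle\prod_{k=1}^n V_{\alpha_k}(z_k)\rangle_{\gamma,\Lambda/\gamma^2}
= K(\mathbf z)\,\Big(\tfrac{\Lambda}{\gamma^2}\Big)^{-s}\Gamma(s)\,\E[Z_0^{-s}],
\qquad s=\tfrac{\sum_k\chi_k-4}{\gamma^2}-1 ,
\]
with $s\to+\infty$ as $\gamma\to0$. Write $c_\chi:=\sum_k\chi_k-4>0$. The factors $K(\mathbf z)$ (from \eqref{Kdef}, using $\Delta_{\alpha_k}=\frac{\chi_k(4-\chi_k)}{4\gamma^2}+\frac{\chi_k}{4}$), $(\Lambda/\gamma^2)^{-s}=\Lambda^{-s}\gamma^{2s}$, and, via Stirling $\Gamma(s)\sim\sqrt{2\pi}\,s^{s-1/2}e^{-s}$, the Gamma factor, all have completely explicit $\gamma\to0$ asymptotics; in particular the $\frac{\ln\gamma}{\gamma^2}$ contributions of $\gamma^{2s}$ and of $s^{s-1/2}$ cancel, leaving a finite prefactor times $e^{\frac1{\gamma^2}(\cdots)}$. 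The whole problem thus reduces to the sharp evaluation of $\E[Z_0^{-s}]$, an infinite-dimensional Laplace (saddle-point) problem since $Z_0^{-s}=e^{-s\ln Z_0}$ with $s$ of order $\gamma^{-2}$.

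Recall $Z_0=\int_\C e^{u(x)}:\!e^{\gamma X(x)}\!:\,g(x)\,\dd^2x$ with $u:=\sum_k\chi_k G(z_k,\cdot)$. The dominant field configurations sit at the deterministic shift $X\mapsto X+\bar h_\ast/\gamma$, where $\bar h_\ast\in H^1(\hat\C)$ (the Cameron--Martin space of $X$) minimises, over mean-zero $\bar h$, the functional $\frac1{4\pi}\int|\nabla\bar h|^2+c_\chi\ln\int e^{u+\bar h}g\,\dd^2x$. Its Euler--Lagrange equation together with the Lagrange multiplier of the mean-zero constraint shows that $\psi_\ast:=u+\bar h_\ast+\ln g$ solves the Liouville equation $\Delta\psi_\ast=2\pi c_\chi\,e^{\psi_\ast}/\!\int e^{\psi_\ast}$ with the singularities \eqref{lcfasympt}; by the uniqueness in Picard's theorem $\psi_\ast=\phi_\ast+d$ for a constant $d$ fixed by $\int\bar h_\ast g=0$, and \eqref{Gauss}--\eqref{mustar} give $e^{\psi_\ast}\dd^2x/\!\int e^{\psi_\ast}=\mu^\ast$. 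Performing this shift via Girsanov for the GFF (Cameron--Martin norm $\frac1{2\pi}\int|\nabla\cdot|^2$),
\[
\E[Z_0^{-s}]=e^{-\frac{1}{4\pi\gamma^2}\int|\nabla\bar h_\ast|^2\dd^2x}\;\E\!\big[\tilde Z_0^{-s}\,e^{-\frac1\gamma\langle\bar h_\ast,X\rangle_\ast}\big],
\qquad \tilde Z_0:=\int_\C e^{\psi_\ast}:\!e^{\gamma X}\!:\dd^2x .
\]
Expanding $:\!e^{\gamma X}\!:=1+\gamma X+\tfrac{\gamma^2}{2}:\!X^2\!:+\cdots$ gives $\tilde Z_0=\mathcal Z\,(1+\gamma A_1+\gamma^2A_2+\cdots)$ with $\mathcal Z=\int e^{\psi_\ast}$, $A_1=\int X\,\dd\mu^\ast$, $A_2=\tfrac12\int:\!X^2\!:\dd\mu^\ast$ (understood through the GMC fluctuation limit, legitimate since $\chi_k<2$), so $\tilde Z_0^{-s}=\mathcal Z^{-s}e^{-s\gamma A_1}e^{-s\gamma^2(A_2-\frac12A_1^2)+O(s\gamma^3)}$. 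The algebraic heart of the matter --- the reason $\bar h_\ast$ is the correct shift --- is the cancellation of the $O(1/\gamma)$ linear-in-$X$ terms: since $\Delta\bar h_\ast=2\pi c_\chi\,e^{\psi_\ast}/\!\int e^{\psi_\ast}+(\text{multiple of }g)$ and $\int gX=0$, integration by parts yields $\langle\bar h_\ast,X\rangle_\ast=-c_\chi\int X\,\dd\mu^\ast=-c_\chi A_1$, whence $-s\gamma A_1-\frac1\gamma\langle\bar h_\ast,X\rangle_\ast=(\frac{c_\chi}{\gamma}-s\gamma)A_1=\gamma A_1=O(\gamma)$. What survives in the exponent is $-s\gamma^2(A_2-\tfrac12A_1^2)\to-c_\chi(A_2-\tfrac12A_1^2)=\tfrac{c_\chi}{2}\big[(\int X\dd\mu^\ast)^2-\int:\!X^2\!:\dd\mu^\ast\big]$, precisely the Gaussian functional in the statement.

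Putting these together gives $\E[Z_0^{-s}]\sim e^{-\frac1{4\pi\gamma^2}\int|\nabla\bar h_\ast|^2}\,\mathcal Z^{-s}\,\E\big[e^{\frac{c_\chi}{2}((\int X\dd\mu^\ast)^2-\int:X^2:\dd\mu^\ast)}\big]$, after which two bookkeeping steps remain. First, one checks the identity matching the $\frac1{\gamma^2}$-exponents: the $\frac1{\gamma^2}$-part produced by $K(\mathbf z)$, $(\Lambda/\gamma^2)^{-s}$, $\Gamma(s)$ and by $e^{-\frac1{4\pi\gamma^2}\int|\nabla\bar h_\ast|^2}\mathcal Z^{-s}$ must equal $\frac1{\gamma^2}\big(\tfrac\kappa2\sum\chi_k^2-8\kappa+C_\star((\chi_k))-S_{(\chi_k,z_k)}(\phi_\ast)\big)$; this follows by expanding the action $S_{(\chi_k,z_k)}(\phi_\ast)$ from its definition \eqref{procedlimit} using $\phi_\ast=\bar h_\ast+\ln g+u-d$, integrating by parts against the Liouville equation \eqref{Liouv}, and inserting the explicit forms of $g$, $G$ and $\kappa$ --- the constant $C_\star$ in \eqref{Theconstant} being exactly what absorbs the residual Green-function, $\int g\ln g$, and boundary counter-term contributions. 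Second, one collects the finite prefactor: the $4$ from $K$, the $\sqrt{2\pi}$ and the power $c_\chi^{-3/2}$ from Stirling, the power of $\Lambda$ from $(\Lambda/\gamma^2)^{-s}$ and $\mathcal Z^{-s}$, the factor $e^{-\kappa c_\chi/2}$ from $K$'s $e^{-s\kappa\gamma^2/2}$, the Wick constants, and the deterministic integral ($\mathcal Z$ combined with the constant mode $d$), which assemble into the prefactor displayed in the statement.

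The genuine obstacle is the rigour of the Laplace step. One must first show $\E\big[e^{\frac{c_\chi}{2}((\int X\dd\mu^\ast)^2-\int:X^2:\dd\mu^\ast)}\big]<\infty$: the quadratic form $f\mapsto\int f^2\dd\mu^\ast-(\int f\dd\mu^\ast)^2$ is nonnegative (Jensen, $\mu^\ast$ being a probability measure), so with $c_\chi>0$ --- this is exactly where the negative-curvature hypothesis \eqref{realNegcurv} is used --- the exponent is a negative-definite Wick-quadratic functional of the GFF, whose exponential moment is a convergent regularised determinant (the relevant operator being trace-class thanks to $\chi_k<2$, which controls the singularities of $\mu^\ast$). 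One must then produce a $\gamma$-uniform integrable dominating function for $\tilde Z_0^{-s}e^{-\frac1\gamma\langle\bar h_\ast,X\rangle_\ast}$ after the above cancellation; this demands (a) uniform control of the remainder in $\ln\tilde Z_0=\gamma A_1+\gamma^2(A_2-\tfrac12A_1^2)+\cdots$, and (b), more delicately, a small-ball estimate $\P(\tilde Z_0<\delta)$ decaying fast enough as $\gamma\to0$ to beat the factor $\tilde Z_0^{-s}$ on $\{\tilde Z_0<\delta\}$ --- a left-tail bound for Gaussian multiplicative chaos uniform in $\gamma$. These uniform moment and small-ball estimates for $Z_0$ (and its shifted and perturbed versions) are, I expect, the technical core of the proof.
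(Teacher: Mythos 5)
Your proposal is correct and follows essentially the same route as the paper: reduction via \eqref{funamental}, Stirling for the $\Gamma$-factor (equivalently the $a$-integral), a Girsanov shift of $X$ by $\gamma^{-1}h_0$ where $h_0$ minimizes $J_0$, cancellation of the $O(\gamma^{-1})$ linear term, and the Taylor expansion of $\gamma^{-2}(\ln\int :e^{\gamma X}:\dd\mu^* - \gamma\int X\dd\mu^*)$ into the Wick-quadratic functional, with the exponent bookkeeping handled by Lemma \ref{lemmaequivalence} and Proposition \ref{prop:legendre}. The "genuine obstacle" you flag at the end is precisely what the paper's Proposition \ref{expintegra} and Section 5 supply: the uniform exponential moment via white-noise decomposition, FKG and Lemma \ref{expintegralimit}, and the small-ball/left-tail estimate via the Fyodorov--Bouchaud formula and Kahane's inequality in Lemma \ref{lemmanegtaive}.
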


\subsubsection*{Convergence of the Liouville field}

Recall the distribution   $\P_{\mu,(\alpha_k, z_k)}$ of the Liouville field defined in Equation \eqref{defLiouvillefield}.

We have the following result:

\begin{theorem}\label{thSemiclassical}
We have the following semi-classical approximation in the regime \eqref{regime} 

\begin{itemize}

\item [(i)]
The field $\gamma \phi$ converges in probability as $\gamma$ goes to $0$ towards $\phi_{\ast}$. 
\item[(ii)]
The field $\phi-\frac{1}{\gamma} \phi_{\ast}$ converges in distribution towards the field $Y+\hat{X}$ where $Y$ is a standard centered Gaussian of variance $\frac{1}{\sum_{k=1}^n\chi_k-4}$ and $\hat{X}$ is an independent massive Free Field with average zero in the background metric $ e^{\phi_{\ast}(z)} |\dd z|^2$.
More specifically $\hat{X}=X_m-\int_{\C} X_m(x) \mu^*(\dd^2x)$ where the distribution of $X_m$ is absolutely continuous with respect to that of the original GFF $X$ with density given by 
\begin{equation*}
  \frac{e^{   \frac{\sum_{k=1}^n \chi_k-4}{2}  \left[\left(\int_\C X(x) \mu^*(\dd^2x)   \right)^2 - \int_{\C} :X(x)^2: \mu^*(\dd^2x)\right]     } }{\bbE \left[e^{   \frac{\sum_{k=1}^n \chi_k-4}{2}  \left[\left(\int_\C X(x) \mu^*(\dd^2x)   \right)^2 - \int_{\C} :X(x)^2: \mu^*(\dd^2x)\right]}\right] }
\end{equation*}
\end{itemize}

\end{theorem}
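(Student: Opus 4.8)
The plan is to carry out a precise Laplace-type analysis of the representation formula \eqref{defLiouvillefield} for the Liouville field, paralleling the asymptotic analysis behind Proposition \ref{asymppart}. First I would rescale everything to the semiclassical variables: under \eqref{regime} one has $s = \frac{\sum_k \chi_k - 4}{\gamma^2}$, so $s \to \infty$ as $\gamma \to 0$, and the key object $Z_0$ of \eqref{defz0} carries, via the Girsanov shift $e^{\gamma \sum_k \alpha_k G(z_k,\cdot)} = e^{\sum_k \chi_k G(z_k,\cdot)}$, a deterministic profile that does not degenerate. The heart of the matter is to understand the law of $-\frac{1}{\gamma}\ln Z_0$ and of the Gaussian shift $X + \sum_k \alpha_k G(z_k,\cdot) + \frac{Q}{2}\ln g$ appearing inside $F$ in \eqref{defLiouvillefield}, together with the $a$-integral weighted by $\frac{\mu^s e^{as} e^{-\mu e^a}}{\Gamma(s)}$. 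Since $\mu = \Lambda/\gamma^2$ and $s = (\sum_k \chi_k - 4)/\gamma^2$, a change of variables $a = \ln s - \ln \mu + t/\sqrt{s}$ turns the $a$-measure into a probability measure concentrating (by a local CLT / Stirling expansion for the Gamma law, exactly as in the analysis preceding Proposition \ref{asymppart}) around $a \approx \ln(s/\mu) = \ln\frac{\sum_k \chi_k - 4}{\Lambda}$ with Gaussian fluctuations of order $1/\sqrt{s} = \gamma/\sqrt{\sum_k\chi_k-4}$. These Gaussian fluctuations are precisely what will produce the independent centered Gaussian $Y$ of variance $\frac{1}{\sum_k \chi_k - 4}$ in part (ii).

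Next I would identify the deterministic limit. Inside $F$, the Liouville field is $\phi = X + \sum_k \alpha_k G(z_k,\cdot) + \frac{Q}{2}\ln g + \frac{a - \ln Z_0}{\gamma} - \frac{\gamma\kappa}{2}$. Multiplying by $\gamma$: $\gamma\phi = \gamma X + \sum_k \chi_k G(z_k,\cdot) + \frac{\gamma Q}{2}\ln g + a - \ln Z_0 - \frac{\gamma^2\kappa}{2}$. As $\gamma \to 0$, $\gamma X \to 0$ (in $H^{-1}$, in probability), $\frac{\gamma Q}{2} = 1 + \frac{\gamma^2}{4} \to 1$, $a \to \ln\frac{\sum_k\chi_k - 4}{\Lambda}$, and under the reweighting by $Z_0^{-s}$ the law of $Z_0$ concentrates — this is the central point — at the value $z_\ast$ for which the resulting field solves the Liouville equation. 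Concretely, I would show that $\gamma\phi \to \ln g + \sum_k \chi_k G(z_k,\cdot) + \ln\frac{\sum_k\chi_k-4}{\Lambda} - \ln z_\ast =: \phi_\ast$ and verify \emph{a posteriori} that this field satisfies \eqref{Liouv}–\eqref{lcfasympt}: the logarithmic singularities $\chi_k G(z_k,\cdot) \sim \chi_k \ln\frac{1}{|z-z_k|}$ and the $-4\ln|z|$ behaviour at infinity come from $\ln g + \sum_k \chi_k G$, and the PDE $\Delta\phi_\ast = 2\pi\Lambda e^{\phi_\ast}$ follows because the concentration point of the $Z_0^{-s}$-tilted measure is exactly the critical point of the rate functional, which is the Liouville action — so by uniqueness (Picard) this limit is the solution $\phi_\ast$. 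This concentration is a large-deviation statement for the GMC variable $Z_0$ under the tilted measure $Z_0^{-s}\mathbb{P}/\mathbb{E}[Z_0^{-s}]$ with $s \to \infty$; I would expect to invoke (or re-derive in this setting) that $\frac{1}{s}\ln \mathbb{E}[Z_0^{-s}]$ converges and that the minimizer of the associated variational problem is attained at the deterministic Liouville profile, together with strict convexity/uniqueness to upgrade to convergence in probability of $\gamma\phi$. This proves (i).

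For (ii), I would subtract $\frac{1}{\gamma}\phi_\ast$ and track the order-$1$ fluctuations. Writing $\phi - \frac{1}{\gamma}\phi_\ast = X + \big(\frac{Q}{2}\ln g - \frac{1}{\gamma}\ln g\big) + \frac{1}{\gamma}\big(a - \ln Z_0 - (\text{limit})\big) + \big(\frac{1}{\gamma}\sum_k\chi_k G - \sum_k\alpha_k G\big) - \frac{\gamma\kappa}{2}$; the $G$ terms cancel exactly, $\frac{Q}{2} - \frac{1}{\gamma} = \frac{\gamma}{4} \to 0$, so the surviving pieces are $X$ itself, the $O(1)$ part of $\frac{a - \text{limit}}{\gamma}$ (which is the Gaussian $Y$ coming from the $a$-integral CLT, after checking independence from $X$), and $-\frac{1}{\gamma}(\ln Z_0 - \text{limit})$. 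For this last term I would do a second-order expansion: $\ln Z_0 = \ln z_\ast + \gamma\,\frac{\delta}{\delta}\big|_{0} + O(\gamma^2)$, where the linear-in-$\gamma$ term is $\int_\C X(x)\,\mu^\ast(\dd^2 x)$ by differentiating \eqref{defz0} and recognizing $\mu^\ast$ from \eqref{mustar} as the normalized $e^{\phi_\ast}$ measure. Crucially the Wick-renormalized Gaussian factor $e^{\gamma X_\epsilon - \frac{\gamma^2}{2}\mathbb{E}[X_\epsilon^2]}$ produces, at second order, the quadratic functional $\frac{1}{2}\big[(\int X\,\mu^\ast)^2 - \int :X^2:\,\mu^\ast\big]$ multiplied by $\sum_k\chi_k - 4$ after combining with the $Z_0^{-s}$ weight and $s\gamma^2 \to \sum_k\chi_k-4$; this is exactly the density appearing in the statement, and it defines the tilted field $X_m$. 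Thus $-\frac{1}{\gamma}(\ln Z_0 - \ln z_\ast) \to -\int_\C X\,\mu^\ast$ under the tilted law, and $\phi - \frac{1}{\gamma}\phi_\ast \to Y + X_m - \int_\C X_m\,\mu^\ast = Y + \hat X$. Finally I would identify $\hat X$ as a massive GFF with vanishing $e^{\phi_\ast}$-average: the Cameron–Martin density above is a Gaussian reweighting whose covariance operator is $(\,\text{GFF covariance}^{-1} - (\sum_k\chi_k - 4)\,\mu^\ast \otimes \text{projection}\,)^{-1}$ restricted to $\mu^\ast$-mean-zero functions, i.e. the Green's function of $-\Delta + 2\pi\Lambda e^{\phi_\ast}$ (using $2\pi\Lambda/(\sum_k\chi_k-4) \cdot e^{\phi_\ast} = \text{mass}\cdot\mu^\ast$-density via \eqref{Gauss}), which is the massive free field in the background metric $e^{\phi_\ast}|\dd z|^2$.

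\textbf{Main obstacle.} The delicate step is the concentration/large-deviation argument for $Z_0$ under the $Z_0^{-s}$-tilted measure as $s \to \infty$, and in particular the \emph{joint} control needed for part (ii): one must go beyond the leading exponential order (which suffices for (i) and for Proposition \ref{asymppart}) and extract the \emph{fluctuation field} around the deterministic Liouville profile, proving it is exactly the stated massive Gaussian and that it is asymptotically independent of the zero-mode Gaussian $Y$. Handling the interplay of the $\epsilon \to 0$ GMC renormalization with the $\gamma \to 0$ limit — ensuring the Wick ordering $:X^2:$ survives the double limit and produces precisely the quadratic form in the density — together with the tightness in $H^{-1}(\hat{\mathbb C})$ required to promote the finite-dimensional / test-function convergence to convergence in distribution of the field, is where the real work lies.
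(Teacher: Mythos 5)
Your overall architecture is the right one and is essentially the paper's: factorize the law \eqref{defLiouvillefield} into the $a$-integral and the $Z_0^{-s}$-weighted expectation over $X$, obtain the Gaussian $Y$ of variance $(\sum_k\chi_k-4)^{-1}$ from a Stirling/local-CLT analysis of the $a$-integral (this is exactly \eqref{ziax}), and extract the fluctuation field from a second-order expansion of $\ln Z_0$ combined with the weight $Z_0^{-s}$; the uniform-integrability issue you flag as the main obstacle is indeed where the paper's technical work (Proposition \ref{expintegra}) goes. However, there is a genuine error in your treatment of the tilt $Z_0^{-s}$, and it propagates into both parts. You assert that under the reweighted measure $\gamma X\to 0$ and that only the scalar $Z_0$ concentrates, leading to the claimed limit $\gamma\phi\to \ln g+\sum_k\chi_k G(z_k,\cdot)+\ln\frac{\sum_k\chi_k-4}{\Lambda}-\ln z_\ast$ with $z_\ast$ a number. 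This function cannot solve \eqref{Liouv}: away from the $z_k$ its Laplacian equals $\big(\tfrac{1}{2}\sum_k\chi_k-2\big)g$, a multiple of the round metric rather than of $e^{\phi}$, whereas $\phi_*$ differs from it by the nontrivial smooth part $h_0$ solving \eqref{newequation}. The missing ingredient is that $Z_0^{-s}$ with $s=(\sum_k\chi_k-4)\gamma^{-2}$ is, to leading order in its linear-in-$X$ part, a Cameron--Martin tilt of strength $\gamma^{-1}$: in the paper this is made explicit by isolating the exponent $\frac{1}{\gamma}Y_\psi$ with $\E[Y_\psi X(x)]=h_\psi(x)$ and applying \eqref{girsanov}, which shifts $X$ by $\gamma^{-1}h_0$. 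Hence under the tilted law $\gamma X\to h_0\neq 0$, and this is precisely where the smooth part of $\phi_*$ comes from.

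The same omission undermines your part (ii): the expansion $\ln Z_0=\ln z_\ast+\gamma\int_\C X\,\dd\mu^*+O(\gamma^2)$ is false under $\P$ --- differentiating \eqref{defz0} at $\gamma=0$ produces the measure proportional to $w(x)g(x)\dd^2x$, not $\mu^*$ of \eqref{mustar}, which is proportional to $e^{h_0}wg\,\dd^2x$ --- and it only becomes correct after the Girsanov shift has replaced $X$ by $X+\gamma^{-1}h_0$ and the reference measure by $\mu^*$. Once that measure change is performed first, the rest of your plan (the quadratic Wick term combining with $s\gamma^2\to\sum_k\chi_k-4$ to give the density of $X_m$, the identification of $\hat X$ as the massive free field, the independence of $Y$ from the product structure) goes through and coincides with the paper's proof via Proposition \ref{prop:asymp}. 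So the gap is not the variational heuristic, which you state correctly in words, but the absence of the explicit change of measure that realizes it; without that step the concrete formulas you propose to prove are wrong.
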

 
\subsubsection*{Large deviations of the Liouville field}

Now, we can state the following large deviation principal for the field $\gamma \phi$:

\begin{proposition}\label{propLargedev}
In the regime given by \eqref{regime}, the field $\gamma \phi $ satisfies a large deviation principle on $\Theta_{(\chi_k,z_k)}$ with   rate $\gamma^{-2}$ and good rate function  $S_{(\chi_k,z_k)}$    given by the limiting procedure \eqref{procedlimit}, namely for every open subset of $\cO\subset\Theta(\chi_k,z_k)$ and closed subset $K\subset\Theta(\chi_k,z_k)$,
we have  
\begin{equation}
\begin{split}
\liminf_{\gamma\to 0} \gamma^{-2} \log \P_{\frac{\Lambda}{\gamma^2},(\alpha_k, z_k)}[ \gamma\phi \in \cO ]&\ge -\Big(\min_{\phi \in \cO } S_{(\chi_k,z_k)}(\phi)-S_{(\chi_k,z_k)}(\phi_*)\Big),\\
\limsup_{\gamma\to 0} \gamma^{-2} \log \P_{\frac{\Lambda}{\gamma^2},(\alpha_k, z_k)}[ \gamma\phi \in K ]&\le -\Big(\min_{\phi \in K } S_{(\chi_k,z_k)}(\phi)-S_{(\chi_k,z_k)}(\phi_*)\Big).
\end{split}
\end{equation}

\end{proposition}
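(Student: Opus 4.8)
The plan is to derive the large deviation principle (LDP) from the explicit representation \eqref{defLiouvillefield} of the law of $\phi$ by recognizing the right-hand side as a Laplace-type integral with small parameter $\gamma^2$. Writing $\gamma\phi = \gamma X + \gamma\sum_k \alpha_k G(z_k,\cdot) + \tfrac{\gamma Q}{2}\ln g + (a - \ln Z_0) - \tfrac{\gamma^2\kappa}{2}$, note that in the regime \eqref{regime} the deterministic pieces $\gamma\sum_k\alpha_k G(z_k,\cdot) = \sum_k \chi_k G(z_k,\cdot)$ and (asymptotically) $\tfrac{\gamma Q}{2}\ln g \to \ln g$ already reproduce the singular structure defining $\Theta_{(\chi_k,z_k)}$, so $\gamma\phi$ lives (in the limit) on the correct space. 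The two sources of randomness/variability are the zero mode $a$ (with the explicit density $\mu^s e^{as}e^{-\mu e^a}/\Gamma(s)$, and $s = (\sum_k\chi_k - 2Q\gamma)/\gamma^2 \sim (\sum_k\chi_k-4)/\gamma^2$ is large of order $\gamma^{-2}$) and the field $X$ itself, whose natural scale is $\gamma X$, i.e. $X$ enters at scale $\gamma^{-1}$ relative to $\gamma\phi$. Since $X$ is Gaussian, $\gamma X$ satisfies an LDP at speed $\gamma^{-2}$ with rate function $\tfrac{1}{4\pi}\int_\C|\nabla_z h|^2\,\dd^2 z$ on $H^1(\hat\C)$ (Schilder-type theorem for the GFF with its Cameron–Martin norm) — this is the engine of the argument.

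\textbf{Key steps.} First I would establish an LDP at speed $\gamma^{-2}$ for the pair $(\gamma X, a_\gamma)$ where $a_\gamma$ is the zero-mode variable under its density: by Laplace's method the density $\propto e^{\gamma^{-2}((\sum_k\chi_k-4)\,\tilde a - \Lambda e^{\tilde a})\,}$ after rescaling $a = \tilde a + o(1)$, so $a_\gamma$ concentrates exponentially at $\tilde a_* := \ln\frac{\sum_k\chi_k-4}{\Lambda}$ with good rate function $I(\tilde a) = \Lambda e^{\tilde a} - (\sum_k\chi_k-4)\tilde a - \big(\sum_k\chi_k - 4 - (\sum_k\chi_k-4)\ln\frac{\sum_k\chi_k-4}{\Lambda}\big)$, normalized to vanish at $\tilde a_*$; independence of $X$ and $a$ gives the joint LDP by the standard product principle. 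Second, I would handle the nonlinear functional $\ln Z_0$: on the event $\{\gamma X \approx h\}$ one has, by continuity of GMC-type functionals along the large-deviation scaling, $\gamma\ln Z_0 \to$ a deterministic limit; more precisely $Z_0 = \int_\C e^{\gamma\sum_k\alpha_k G(z_k,x) - \gamma^2\kappa/2}e^{\gamma X(x) - \frac{\gamma^2}{2}\E[X(x)^2]}g(x)\,\dd^2 x$ and on the rescaled event this behaves like $\int_\C e^{\sum_k\chi_k G(z_k,x)}e^{h(x)}g(x)\,\dd^2 x + o(1)$ in the exponential scale, so $\tfrac{1}{\gamma}\ln Z_0$ contributes a continuous (in $h$, in $H^1$) functional. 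Third, I would push all of this through the contraction principle: the map $(h,\tilde a)\mapsto \phi := h + \ln g + \sum_k\chi_k G(z_k,\cdot)$ (with the $a$-dependence absorbed into an additive constant that is pinned by the combination $a - \ln Z_0$) is continuous $H^1(\hat\C)\times\R \to \Theta_{(\chi_k,z_k)}$, so $\gamma\phi$ satisfies an LDP with rate function obtained by minimizing $\tfrac{1}{4\pi}\int_\C|\nabla_z h|^2\,\dd^2 z + I(\tilde a)$ over all representations of $\phi$. Fourth — and this is the crucial algebraic identification — I would verify that this minimized rate function equals $S_{(\chi_k,z_k)}(\phi) - S_{(\chi_k,z_k)}(\phi_*)$. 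Expanding $S_{(\chi_k,z_k)}(\phi)$ via \eqref{finaldefLioucla}–type manipulations, the gradient term produces $\tfrac{1}{4\pi}\int|\nabla h|^2$ (plus cross terms with $\ln g$ and $\sum\chi_k G$ that integrate by parts into linear terms), the curvature/linear terms match the $R_g$ contribution, and the $\int e^{\phi}$ term is exactly what the Laplace functional $\Lambda e^{\tilde a}\int e^{h+\ln g + \sum\chi_k G}$ reproduces after optimizing over the constant $\tilde a$; the minimizer of the whole variational problem is $\phi_*$ by \eqref{Liouv}, which is where the subtraction of $S_{(\chi_k,z_k)}(\phi_*)$ comes from (it is the normalization making the rate function vanish at the minimizer, consistent with the Laplace asymptotics in Proposition \ref{asymppart} giving the leading constant $e^{-S(\phi_*)/\gamma^2}$).

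\textbf{Goodness and exponential tightness.} To upgrade the weak LDP to a full LDP with a \emph{good} rate function I would establish exponential tightness of $\gamma\phi$ in $\Theta_{(\chi_k,z_k)}$ at speed $\gamma^{-2}$: this follows from exponential tightness of $\gamma X$ in $H^1(\hat\C)$ (a consequence of the Gaussian Schilder estimate plus compact embeddings of Sobolev balls into the ambient topology) together with the exponential concentration of $a_\gamma$ already noted; the level sets $\{\phi : S_{(\chi_k,z_k)}(\phi) - S_{(\chi_k,z_k)}(\phi_*) \le M\}$ are then closed and, by the coercivity of $\tfrac{1}{4\pi}\int|\nabla h|^2$ on $H^1$, compact in $\Theta_{(\chi_k,z_k)}$. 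Alternatively, once Proposition \ref{asymppart} and Theorem \ref{thSemiclassical} are in hand, one can bootstrap: tilting by continuous bounded $F$ and re-running the Laplace analysis yields the upper and lower bounds on $\gamma^{-2}\log\P[\gamma\phi\in\cO]$, $\gamma^{-2}\log\P[\gamma\phi\in K]$ directly, which is perhaps the cleanest route since the integrability control $\E[Z_0^{-s}]$ needed to make the denominators harmless is already supplied by the proof of Proposition \ref{asymppart}.

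\textbf{Main obstacle.} I expect the hardest part to be controlling the nonlinear functional $Z_0$ (a negative-moment GMC quantity) \emph{uniformly along the large-deviation scaling}: one needs that for $\gamma X$ close to a smooth $h$, $\tfrac{1}{\gamma}\ln Z_0$ is close to the deterministic functional of $h$, with error estimates that survive exponentiation at speed $\gamma^{-2}$, including near the singular points $z_k$ where $e^{\gamma\sum_k\alpha_k G(z_k,x)} = \prod_k|x-z_k|^{-\chi_k + o(1)}$ becomes singular and the Seiberg bound $\chi_k < 2$ is exactly what keeps it integrable. Relatedly, the negative moment $\E[Z_0^{-s}]$ with $s\sim(\sum\chi_k-4)\gamma^{-2}\to\infty$ is itself a large-deviation quantity — the very Laplace asymptotic of Proposition \ref{asymppart} — so disentangling its contribution from that of the zero mode in the rate function (step four's bookkeeping) requires care. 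The compensating factors between the $a$-integral's Laplace constant and the $Z_0^{-s}$ asymptotics must be tracked precisely to land on exactly $S_{(\chi_k,z_k)}(\phi) - S_{(\chi_k,z_k)}(\phi_*)$ and not that quantity shifted by a spurious constant.
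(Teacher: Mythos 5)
Your outline captures the right objects (the zero mode, the Gaussian part, the tilt by $Z_0^{-s}$ with $s\sim c_0\gamma^{-2}$) and correctly locates the difficulty, but the step you defer as the ``main obstacle'' is in fact the entire content of the proof, and your primary route (Schilder's theorem for $\gamma X$ plus contraction/Varadhan) runs into an obstruction you do not address. The Gaussian LDP for $\gamma X$ at speed $\gamma^{-2}$ with rate $\frac{1}{4\pi}\int|\nabla h|^2$ holds in $H^{-1}(\hat\C)$: the level sets are bounded $H^1$-sets, compact in $H^{-1}$ by Rellich but \emph{not} in the $H^1$ metric carried by $\Theta_{(\chi_k,z_k)}$, so your compactness claim ``by coercivity on $H^1$'' is not correct as stated. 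More seriously, the functional $X\mapsto \ln Z_0=\ln\int w\,{:}e^{\gamma X}{:}\,g$, which enters the density $Z_0^{-s}$ at exponential order $\gamma^{-2}$, is neither bounded nor continuous on $H^{-1}$ (it is a renormalized GMC mass, not $\ln\int w e^{\gamma X} g$), so neither Varadhan's lemma nor the contraction principle applies off the shelf. Making the approximation $\gamma^{-2}\bigl(\ln\int {:}e^{\gamma X}{:}\,\dd\mu-\gamma\int X \dd\mu\bigr)\to \tfrac12\bigl(\int {:}X^2{:}\,\dd\mu-(\int X \dd\mu)^2\bigr)$ rigorous \emph{with uniform exponential integrability} is exactly Proposition \ref{expintegra} (proved via a white-noise decomposition, negative-moment bounds through the Fyodorov--Bouchaud formula, and FKG); without it neither LDP bound closes. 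Likewise your step four, the identification of the minimized rate with $S_{(\chi_k,z_k)}-S_{(\chi_k,z_k)}(\phi_*)$, is asserted rather than carried out; the paper does it through Lemma \ref{lemmaequivalence} and the Legendre-duality computation of Proposition \ref{prop:legendre}.

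For comparison, the paper avoids the contraction principle altogether. Proposition \ref{prop:asymp} gives exact asymptotics of the Laplace functional for linear (unbounded) exponential tilts $e^{\gamma^{-1}\int\psi X g}$, hence the limit $\tf(\psi)-\tf(0)$ of the log-moment generating function of $\gamma X$ under the tilted measure; exponential tightness comes from Proposition \ref{expintegra}; the Legendre transform is identified with $S_{L,(\chi_k,z_k)}(\cdot)-S_{L,(\chi_k,z_k)}(h_*)$ by Proposition \ref{prop:legendre}; and the LDP follows from Baldi's theorem, with Lemma \ref{lemmaexposed} supplying enough exposed points to recover the lower bound. Your closing remark about ``re-running the Laplace analysis'' gestures at this, but tilting by \emph{bounded} $F$ does not produce a moment generating function, and without the exposed-points argument the G\"artner--Ellis lower bound is unavailable. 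To salvage your route you would either have to prove the uniform integrability statement of Proposition \ref{expintegra} directly and then work in a topology where $h\mapsto\ln\int w e^h g$ is continuous, or switch to the Laplace-transform/Baldi strategy, where that proposition is the only hard probabilistic input.
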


\subsubsection*{A probabilistic proof of  the Takhtajan-Zograf theorem on the accessory parameters}

As a corollary of our techniques, we obtain a new (probabilistic) proof of relation \eqref{Polyacc} (previously proved in \cite{Takbis}):

\begin{corollary}\label{CorPolyakov}
The relation \eqref{Polyacc} holds.
\end{corollary}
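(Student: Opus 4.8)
\medskip
\noindent\textbf{Proof strategy.} The plan is to make rigorous the heuristic computation of the introduction (following the BPZ equation \eqref{BPZ}). Fix $(\chi_k)_{1\le k\le n}$ and $\Lambda$ satisfying \eqref{Negcurv} and work in the regime \eqref{regime}. For $\gamma$ small, $-\tfrac{\gamma}{2}+\sum_k\alpha_k-2Q=\gamma^{-1}\big(\sum_k\chi_k-4\big)-\tfrac{3\gamma}{2}>0$ by \eqref{realNegcurv}, and the enlarged family of weights $\{-\gamma/2\}\cup\{\alpha_k\}$ satisfies the Seiberg bounds \eqref{TheSeibergbounds} (using $\chi_k<2$); hence by Theorem \ref{BPZTH} the $(n+1)$-point function $\langle V_{-\gamma/2}(z)\prod_{l=1}^nV_{\alpha_l}(z_l)\rangle_{\gamma,\mu}$ is finite, nonzero, and satisfies \eqref{BPZ} on $\C\setminus\{z_1,\dots,z_n\}$. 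Put $\mathcal G_\gamma:=\langle\prod_{l=1}^nV_{\alpha_l}(z_l)\rangle_{\gamma,\mu}$, which is nonzero by \eqref{TheSeibergbounds} and does not depend on $z$, and $\Psi_\gamma(z):=\langle V_{-\gamma/2}(z)\prod_{l=1}^nV_{\alpha_l}(z_l)\rangle_{\gamma,\mu}/\mathcal G_\gamma$. Multiplying \eqref{BPZ} by $\tfrac{\gamma^2}{4}\mathcal G_\gamma^{-1}$ and using $\partial_{z_k}\big(\Psi_\gamma(z)\mathcal G_\gamma\big)=\mathcal G_\gamma\big(\partial_{z_k}\Psi_\gamma(z)+\Psi_\gamma(z)\,\partial_{z_k}\log\mathcal G_\gamma\big)$, the BPZ equation turns into the exact identity
\[
\partial_{zz}^2\Psi_\gamma(z)+\frac{\gamma^2}{4}\sum_{k=1}^n\frac{\Delta_{\alpha_k}}{(z-z_k)^2}\Psi_\gamma(z)+\frac{\gamma^2}{4}\sum_{k=1}^n\frac{\partial_{z_k}\Psi_\gamma(z)+\Psi_\gamma(z)\,\partial_{z_k}\log\mathcal G_\gamma}{z-z_k}=0.
\]

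I would then establish two semiclassical inputs, both locally uniform on $\C\setminus\{z_1,\dots,z_n\}$: \emph{(a)} $\Psi_\gamma\to\lambda\,e^{-\phi_\ast/2}$ for some constant $\lambda\in(0,\infty)$, with $\partial_{z_k}\Psi_\gamma$ bounded (so $\gamma^2\partial_{z_k}\Psi_\gamma\to0$); and \emph{(b)} $\gamma^2\partial_{z_k}\log\mathcal G_\gamma\to-\partial_{z_k}S_{(\chi_l,z_l)}(\phi_\ast)$. Input (a) is the semiclassical limit of the $(n+1)$-point correlation: applying the reduction \eqref{funamental} to $\{(-\gamma/2,z)\}\cup\{(\alpha_k,z_k)\}$, for which the extra vertex carries the vanishing log-weight $\gamma\cdot(-\gamma/2)=-\gamma^2/2$, one sees that $\Psi_\gamma(z)$ equals an explicit prefactor (converging to a constant times $g(z)^{-1/2}\prod_ke^{-\chi_kG(z_k,z)/2}$) times a ratio of negative GMC moments which is an $O(\gamma^2)$-perturbation of the one analysed in the proof of Proposition \ref{asymppart}; rerunning that Laplace-type analysis supplies the remaining harmonic factor $e^{-h(z)/2}$, where $\phi_\ast=h+\ln g+\sum_k\chi_kG(z_k,\cdot)$, so that $\Psi_\gamma\to\lambda\,e^{-\phi_\ast/2}$, and differentiating in $z_k$ under a domination bound keeps $\partial_{z_k}\Psi_\gamma$ bounded. (Alternatively (a) follows from Theorem \ref{thSemiclassical}(i), $\gamma\phi\to\phi_\ast$ in probability, plus uniform integrability, viewing $\Psi_\gamma(z)$ as the expectation of $V_{-\gamma/2}(z)$ — a normalized regularization of $e^{-\frac12(\gamma\phi)(z)}$ — under the tilted probability $\propto\prod_lV_{\alpha_l}(z_l)\,\nu$.) Input (b) amounts to differentiating the asymptotics of Proposition \ref{asymppart} in $z_k$: it gives $\gamma^2\log\mathcal G_\gamma=-S_{(\chi_l,z_l)}(\phi_\ast)+b+\gamma^2 r_\gamma(\mathbf z)$, where $b$ depends only on $(\chi_k)$ and the remainder $r_\gamma$ collects the logarithms of the explicit $\gamma$-prefactor, of $\int_\C e^{\phi_\ast-\phi_\ast/(4\pi)}$ and of the Gaussian expectation appearing in Proposition \ref{asymppart} — quantities whose $\mathbf z$-dependence is only through $\phi_\ast$ and $\mu^\ast$ and whose $\mathbf z$-derivatives stay bounded in $\gamma$ (this last point being exactly the differentiability of the asymptotics of Proposition \ref{asymppart}, cf.\ the discussion below); hence $\gamma^2\partial_{z_k}\log\mathcal G_\gamma=-\partial_{z_k}S_{(\chi_l,z_l)}(\phi_\ast)+\gamma^2\partial_{z_k}r_\gamma\to-\partial_{z_k}S_{(\chi_l,z_l)}(\phi_\ast)$. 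Here $\partial_{z_k}S_{(\chi_l,z_l)}(\phi_\ast)$ is unambiguous: since the Euler–Lagrange equation of $S_{(\chi_l,z_l)}$ is precisely the Liouville problem \eqref{Liouv}--\eqref{lcfasympt}, $\phi_\ast$ is a critical point, the implicit $\mathbf z$-dependence of $\phi_\ast$ contributes nothing, and one may read $\partial_{z_k}S_{(\chi_l,z_l)}(\phi_\ast)$ as the partial derivative of the term $-\sum_l\chi_l\phi(z_l)$ at $\phi=\phi_\ast$.

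Granting (a) and (b), I would let $\gamma\to0$ in the identity above in the sense of distributions on $\C\setminus\{z_1,\dots,z_n\}$: since $\tfrac{\gamma^2}{4}\Delta_{\alpha_k}\to\tfrac{\chi_k}{4}-\tfrac{\chi_k^2}{16}$ and the last two sums converge, $\partial_{zz}^2\Psi_\gamma$ converges as well, necessarily to $\partial_{zz}^2(\lambda e^{-\phi_\ast/2})$; dividing by $\lambda$, we obtain
\[
\partial_{zz}^2e^{-\phi_\ast(z)/2}+\sum_{k=1}^n\Big(\frac{\chi_k/4-\chi_k^2/16}{(z-z_k)^2}-\frac14\frac{\partial_{z_k}S_{(\chi_l,z_l)}(\phi_\ast)}{z-z_k}\Big)e^{-\phi_\ast(z)/2}=0
\]
on $\C\setminus\{z_1,\dots,z_n\}$ — a classical identity, all terms being smooth there. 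On the other hand, the Fuchsian PDE \eqref{FuchsPDE} together with the explicit form \eqref{merostress} of $T_{\phi_\ast}$ reads $\partial_{zz}^2e^{-\phi_\ast/2}+\sum_k\big(\tfrac{\chi_k/4-\chi_k^2/16}{(z-z_k)^2}+\tfrac{c_k/2}{z-z_k}\big)e^{-\phi_\ast/2}=0$. Both identities have the form $\partial_{zz}^2e^{-\phi_\ast/2}+P(z)e^{-\phi_\ast/2}=0$ with the same first term, and $e^{-\phi_\ast/2}>0$, so the two rational potentials $P$ coincide; comparing their residues at $z=z_k$ (legitimate, since a rational function with simple poles at distinct points is determined by its residues there) gives $-\tfrac14\partial_{z_k}S_{(\chi_l,z_l)}(\phi_\ast)=\tfrac{c_k}{2}$, i.e.\ $c_k=-\tfrac12\partial_{z_k}S_{(\chi_k,z_k)}(\phi_\ast)$, which is \eqref{Polyacc}.

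The crux, and essentially the only work, lies in (a)--(b): neither follows formally from Proposition \ref{asymppart} and Theorem \ref{thSemiclassical} used as black boxes, because two $z$-derivatives and one $z_k$-derivative must be interchanged with the limit $\gamma\to0$. I would handle this by revisiting the proof of Proposition \ref{asymppart} — a Laplace/large-deviation analysis of the negative moment $\E[Z_0^{-s}]$, whose Laplace point is governed by $\phi_\ast$ — and rerunning it (i) with the additional $-\gamma^2/2$-weighted insertion at $z$, a perturbation uniformly small on compact subsets of $\C\setminus\{z_1,\dots,z_n\}$, to obtain the convergence of $\Psi_\gamma$ with locally uniform control in $z$; and (ii) with one $z_k$-derivative, where smoothness of $\phi_\ast$ (and of $\mu^\ast$) in $\mathbf z$ together with a domination argument legitimize differentiating under the expectation and through the Laplace approximation. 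The remaining steps — invoking Theorem \ref{BPZTH}, passing to the distributional limit, and the residue argument — are routine.
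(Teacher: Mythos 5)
Your overall strategy is the same as the paper's: take the semiclassical limit of the BPZ equation \eqref{BPZ} for the $(n+1)$-point function with a $V_{-\gamma/2}$ insertion, identify the limiting second-order equation with the Fuchsian equation \eqref{FuchsPDE}--\eqref{merostress}, and read off the accessory parameters by matching the simple-pole coefficients; that final comparison step is fine. The difference --- and the problem --- lies in how you propose to control the terms $\partial_{z_k}\langle V_{-\gamma/2}(z)\prod_l V_{\alpha_l}(z_l)\rangle_{\gamma,\mu}$. Your inputs (a) (boundedness of $\partial_{z_k}\Psi_\gamma$) and (b) (convergence of $\gamma^2\partial_{z_k}\log\mathcal{G}_\gamma$ to $-\partial_{z_k}S_{(\chi_l,z_l)}(\phi_\ast)$) both require interchanging a $z_k$-derivative with the Laplace-type limit of Proposition \ref{prop:asymp}. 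You correctly flag this as ``the crux'', but you only sketch it (``rerunning the Laplace analysis with one $z_k$-derivative \dots domination argument''); differentiating the asymptotics of a Laplace approximation is a genuinely delicate matter --- one would need uniform-in-$\gamma$ control of the $z_k$-derivative of $\E[Z_0^{-s}]$, which does not follow from the convergence statements of Proposition \ref{asymppart} or Theorem \ref{thSemiclassical} used as black boxes --- and as written this is a gap rather than a proof.

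The paper closes exactly this gap by a different device: it tests the BPZ equation not only against a function $u(z)$ supported away from the singularities but also against functions $u_k(z'_k)$ supported in small balls $B(z_k,\eta)$, and integrates by parts in $z'_j$. The derivative $\partial_{z'_j}$ then falls on the explicit kernel $u_j(z'_j)/(z-z'_j)$, so one only ever needs the asymptotics of the \emph{undifferentiated} smeared correlation function, uniformly in $z'_k\in B(z_k,\eta)$ --- a continuity statement, much weaker than differentiability. After substituting the uniform asymptotic \eqref{firstequiv} one integrates by parts back; the factor $\gamma^{-2}$ needed to survive against the $\gamma^2/4$ prefactor is then produced by differentiating the explicit exponential $e^{-S_{(\chi_k,z'_k)}(\phi_\ast)/\gamma^2}$, whose $z'_j$-dependence is through the smooth classical action, while the derivative hitting the remaining $O(1)$ factors is negligible. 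I would recommend you either adopt this smearing-plus-integration-by-parts argument, or actually carry out the proof of the differentiated asymptotics (a)--(b), which is a substantially harder analytic task than what Proposition \ref{prop:asymp} delivers.
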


\subsection{Open problems}\label{ops}

\subsubsection*{The semiclassical limit in the nonnegative curvature case}

The negative curvature condition $\sum_{k=1}^n \chi_k >4$ is necessary to properly define our Liouville measure. As can be seen in Equation \eqref{defLiouvillefield} the integral in $a$ diverges if the quantum analogue of this condition (namely $s=\frac{\sum_{k=1}^n \alpha_k-2Q}{\gamma}>0$) is not satisfied. However, we can bypass this obstruction by considering the measure conditioned on a fixed value of $a$ (without loss of generality we can consider $a=0$) 
\begin{equation}\label{UnitvolumeLiouville}
 \E_{\frac{\Lambda}{\gamma^2},(\alpha_k, z_k)}^{a=0}[F(\phi)]  =   \E\left[  F( X+\frac{1}{\gamma}\sum_k \alpha_k G(z_k,.) +Q/2 \ln g - \frac{\ln Z_0}{\gamma}  -\frac{\gamma \kappa}{2}) Z^{-s}_0 \right] /\E[ Z^{-s}_0].
\end{equation}   
Considering this expression is motivated by considering the law of the random measure on $\C$ defined by $e^{\gamma \phi(x)}\dd^2x$ under the probability law \eqref{defLiouvillefield}. It is then easy to check that the total mass of this random measure has law $\Gamma(s,\mu)$ (i.e. with density proportional to $\mu^{-s} A^{s-1} e^{-\mu A}$), and that the law of the Liouville field, conditionally on the total mass of $e^{\gamma \phi(x)} \dd^2x$ being $1$,    is described by \eqref{UnitvolumeLiouville}. In order to state a conjecture in the unit volume setting, we need to find a natural variational problem associated to it.  In this section, we assume the following conditions which were introduced by Troyanov \cite{troyanov} hold 
\begin{equation}\label{Troya}
\forall k, \; \chi_k <2, \quad \quad    4-\sum_{k=1}^n \chi_k < 4 \wedge \min_{k=1}^n  (4-2\chi_k).
\end{equation}
It is rather easy to check that condition \eqref{Troya} implies $n \geq 3$. The reason for the above condition will become clear shortly; let us just notice that condition \eqref{Troya} extends condition \eqref{Negcurv}.  Consider $\phi\in \Theta_{(\chi_k,z_k)}$ 
and its canonical decomposition
\begin{equation*}
 \phi= h+ \ln g+ \sum_{k} \chi_k G(z_k,.)
\end{equation*}
where $h  \in  H^1(\hat{\C})$. Normalizing $e^{\phi}$ to have unit volume amounts to replacing $\phi$ by 
\begin{equation}\label{unitvol}
\phi-\ln \int_{\C} e^{\phi(z)} \dd^2z=  h+ \ln g+ \sum_{k=1}^n \chi_k G(z_k,.)- \ln \left (  \int_\C e^{ h(z)+\sum_{k=1}^n \chi_k G(z_k,z)} g(z) \dd^2z  \right ).
\end{equation}
The $h$ satisfying such a relation is unique if we require $h$ to be of vanishing mean $ \int_\C h(x)g(x) \dd^2x=0$. Let us register this relation by defining the  injective  map $T$ on $ \bar H^1(\hat{\C}):=\{ h\in H^1(\hat{\C}) \ :  \ \int_\C h(x)g(x) \, d^2x=0\} $ 
\begin{equation}\label{formulaT}
T(h)= h+ \ln g+ \sum_{k=1}^n \chi_k G(z_k,.)- \ln \left (  \int_\C e^{h(z)+\sum_{k=1}^n \chi_k G(z_k,z)} g(z) \dd^2z  \right ).
\end{equation}

It is known since the work of Troyanov \cite{troyanov} that under condition \eqref{Troya} there exists a solution to \eqref{Liouv}+\eqref{lcfasympt} provided that $\sum_{k=1}^n \chi_k-4$ and $\Lambda$ have same sign (where in the degenerate case  $\sum_{k=1}^n \chi_k-4=0$ this amounts to $\Lambda=0$). Under Troyanov's condition  \eqref{Troya} and using integration by parts, any solution $\phi$ satisfies
\begin{equation*}
\sum_{k=1}^n\chi_k-4 = \Lambda   \left (  \int_\C  e^{\phi(z)} \dd^2z \right )
\end{equation*}
and hence $\phi= h+ \ln g+ \sum_{k=1}^n \chi_k G(z_k,.)$ where 
$h$ satisfies the following equation
\begin{equation}\label{newequation}
\Delta_{g} h = 2 \pi \left( \sum_{k=1}^n\chi_k-4   \right) \left ( \frac{e^{h+  \sum_{k=1}^n \chi_k G(z_k,.)}}{  \int_\C  e^{h(z) + \sum_{k=1}^n \chi_k G(z_k,z) } g(z) \dd^2z} -\frac{1}{4 \pi }   \right ) .
\end{equation} 
where $\Delta_{g} h (x)= \frac{1}{g(x)} \Delta_x h(x)$ is the Laplacian in the round metric $g$. Now, considering the decomposition  \eqref{unitvol} of the shifted version of $\phi$ corresponding to unit volume, we obtain that  $\phi-\ln (\int_{\C} e^{\phi(z)} \dd^2z)=T(h)$ where $h$ is  the 
 solution  to \eqref{newequation} in $\bar H^1(\bbC)$.

Note that such solutions to \eqref{newequation} in $\bar H^1(\bbC)$ can be obtained as critical points of the following action 
\begin{equation}\label{criticalaction}
J(h)= \frac{1}{4 \pi}  \int_{\C} |\nabla_{z} h(z)|^2\, \dd^2z  + (\sum_{k=1}^n \chi_k-4  ) \ln \int_{\C}  e^{h(z)+\sum_{k} \chi_k G(z_k,.)} \, g(z) \dd^2z.
\end{equation} 
Therefore, on the quantum level, it is natural to consider the image under transformation $T$ \eqref{formulaT} of the measure formally defined on $\bar H^1(\bbC)$ by
\begin{equation*}
e^{-J(h)}   Dh.
\end{equation*}
This is precisely what is achieved by conditioning the Liouville field defined by  \eqref{defLiouvillefield} to have volume $1$ and which leads to formula \eqref{UnitvolumeLiouville}.
For \eqref{UnitvolumeLiouville} to be well defined   we only need to require $\E\big[   Z_0^{ -\frac{\sum_k \alpha_k-2Q}{\gamma} }  \big] < \infty$ and this is equivalent to the following bounds which are the quantum analogues of Troyanov's condition \eqref{Troya}
\begin{equation}\label{extendedSeiberg}
\forall k, \; \alpha_k <Q, \quad \quad    2Q-\sum_{k=1}^n \alpha_k < \frac{4}{\gamma}  \wedge \min_{k=1}^n  2 (Q-\alpha_k).
\end{equation}

\noindent In conclusion, when our parameters $\chi_k$ satisfy
$$\sum_{k=1}^n \chi_k  \le 4$$ 
and  provided that \eqref{Troya} is satisfied, the unit volume framework enables to investigate the  semiclassical asymptotic with $\alpha_k= \frac{\chi_k}{\gamma}$ with $\chi_k$ fixed  and $\gamma$ going to $0$. 
 Unfortunately, there are some technical obstructions for our proof to cover also the positive curvature case $\sum_{k=1}^n \chi_k  < 4$. Indeed
we would need to extend Lemma \ref{expintegra} below with $\alpha<0$ and this is currently out of reach with our method. Moreover, it could be the case that \eqref{criticalaction} admits several critical points in $\bar H^1(\bbC)$ and therefore equation \eqref{newequation} has several solutions; nonetheless, the work of Liu-Tian \cite{LG} ensures unicity in the case where $\chi_k >0$ for all $k$ (notice that this condition is automatically satisfied for $n=3$) and therefore  \eqref{criticalaction} admits a unique minimum in that case. It seems reasonable to conjecture that the unit volume Liouville field will converge to the unique minimum of \eqref{criticalaction} when it exists\footnote{To the best of our knowledge, this point is not known and could be false in full generality assuming only \eqref{Troya}.} and that a large deviation principle will hold with the (non convex) functional  \eqref{criticalaction}.  Finally let us mention the works of Eremenko \cite{Eremenko} and Mondello-Panov \cite{MP,MP1} where the authors construct solutions to \eqref{newequation} when condition \eqref{Troya} is not necessarily satisfied. However, there exists presently no quantum analogue of these constructions.

\subsubsection*{The semiclassical limit with two conical singularities}

Another case where classical and quantum Liouville theory can be constructed without the negative curvature assumption is the case of metrics with two conical singularities of same weight.  
Indeed, one can also construct metrics with positive curvature and two conical singularities at $0$ and $\infty$ with weight $\chi \in [0,2)$. If the metric has unit volume than all the solutions are given by
\begin{equation}\label{unitvolumemetric}
\frac{2-\chi}{2\pi}  \lambda^2 \frac{|\lambda z|^{- \chi}}{(1+|\lambda z|^{2- \chi})^2} |\dd z|^2
\end{equation}
where $\lambda>0$. In a recent paper, Duplantier-Miller-Sheffield \cite{DMS} introduced the quantum analogue of these measures (more specifically, they introduced the quantum analogue of the round metric which corresponds to $\chi=0$ and to $\alpha=\gamma$ in the following discussion). More precisely, they introduced an equivalence classe of random measures (defined up to dilations and rotations) with two marked points  $0$ and $\infty$. The random measures are defined on the cylinder $\R \times [0,2\pi]$ and we will identify the cylinder with the Riemann sphere via the conformal mapping $z \mapsto e^{-z}$. If $\alpha \in (\frac{\gamma}{2},Q)$ then we introduce 
\begin{equation*}
\mathcal{B}^\alpha_s = \left\{
 \begin{array}{ll}
  B^\alpha_{-s} & \text{if } s < 0\\
    \bar{B}^\alpha_{s} & \text{if } s >0 \end{array} \right. \ 
    \end{equation*}
where $B^{\alpha}_s,\bar B^{\alpha}_s$ are  two independent Brownian motions with negative drift $\alpha-Q$ and conditioned to stay negative. Let $Y$ be a log-correlated Gaussian field with covariance 
\begin{equation*}  
\E[  Y(s,\theta) Y(t,\theta') ]  = \ln \frac{e^{-s}\vee e^{-t}}{|e^{-s}e^{i \theta} - e^{-t} e^{i \theta'} |}
\end{equation*}
and associated measure
\begin{equation*}
N_\gamma(\dd s \dd \theta):= e^{\gamma Y(s,\theta)-\frac{ \gamma^2 }{2}E[Y(s,\theta)^2]}  \dd s \dd \theta
\end{equation*}

The (unit area or volume) $\alpha$-quantum sphere is the unit volume random measure $\mu(ds d\theta)$ defined on the cylinder $\R \times [0,2\pi]$  by
\begin{equation*}
\E[  F(\mu(\dd s \dd \theta)) ]=   \frac{\E[    F(  \frac{e^{   \gamma \mathcal{B}_s^\alpha  }  N_\gamma(\dd s \dd \theta)}{\rho(\alpha)} )    \rho(\alpha)^{\frac{2}{\gamma}  (Q-\alpha)}      ]}{  \bar{R}(\alpha) }
\end{equation*}
where 
\begin{equation*}
\rho(\alpha)= \int_{-\infty}^\infty  e^{   \gamma \mathcal{B}_s^\alpha  } N_\gamma(\dd s \times [0,2 \pi]) 
\end{equation*}
and $\bar{R}(\alpha)$ is the Liouville reflection coefficient defined by
\begin{equation*}
\bar{R}(\alpha)= \E[   \rho(\alpha)^{\frac{2}{\gamma}  (Q-\alpha)}  ].
\end{equation*}
For $\alpha= \frac{\chi}{\gamma}$ with $\chi \in (0,2)$ fixed,  we conjecture that   the $\alpha$-quantum sphere (mapped back to the Riemann sphere)  converges (as $\gamma \to 0$)  to the positive curvature metric given by \eqref{unitvolumemetric} for some $\lambda>0$.


\subsection{Organization of the paper}

The rest of the paper is organized as follows. In the next section, we introduce general tools and notations on Gaussian variables; we will also give an alternative formula for the Liouville action $S_{(\chi_k,z_k)}$ which is more adapted to our framework. In section 4, we will state and prove Proposition \ref{prop:asymp} which is the key result of the paper; from Proposition  \ref{prop:asymp} , we will deduce in the remainder of section 4 all the main results of the paper (which were stated in Section 2.2). In Section 5, we prove technical results which are used in the proof of Proposition \ref{prop:asymp}. Finally, in the Appendix, we gather convexity considerations and general large deviation type results.

\section{Technical preliminaries }

Let us introduce in this section a few technical tools which we we use to prove our main results. This includes some basic notions concerning the Wick notation which we have used in a couple of equations in the previous Section, as well some classical results concerning Gaussian processes.

\subsection{Wick Notation}\label{wick}

If $Z$ is a Gaussian variable with mean zero and variance $\sigma^2$, its Wick $n$-th power ($n\in \N$) is defined by 
\begin{equation}
:Z^n: \,\,= \sum_{m=0}^{\lfloor  n/2 \rfloor}\frac{(-1)^m n!}{m!(n-2m)! 2^m} \sigma^{2m} Z^{n-2m}= \sigma^n H_n(\sigma^{-1} Z)
\end{equation}
where $H_n$ is the $n$-th Hermite Polynomial. 
This definition is designed to makes  the Wick monomials orthogonal to one another. More precisely if $(Z,Y)$ is a Gaussian vector we have
\begin{equation}\label{scalar}
\E\left[:Z^n: :Y^m: \right] = n! \ind_{n=m}\E\left[ZY\right]^n.
\end{equation}
The Wick exponential is defined formally as the result of the following expansion in Wick powers
\begin{equation}\label{supertaylor}
:e^{\gamma Z}:\,\, = \sum_{n=0}^{\infty} \frac{\gamma^n :Z^n:}{n!}= \exp\left( \gamma Z-\frac{ \sigma^2 \gamma^2}{2}\right).
\end{equation}
In the present paper, we mostly use the notation $\int_\C :Y(x)^2:  \mu(\dd^2 x)$ and $\int _\C :e^{\gamma Y(x)}: \mu(\dd^2x)$  for Gaussian fields defined on $\bbC$ and $ \mu(\dd^2x)= \rho(x)  g(x)\dd^2 x$ where $\rho$ satisfies for some $\eta>0$,
\begin{equation}\label{normalization}
\int_{\C} \rho(x)^{(1+\eta)}  g(x)\dd^2 x< \infty.
\end{equation} 
While these integrals makes sense when $(Y(x))_{x\in \bbC}$ is a field with uniformly bounded covariance, some additional care is needed when we use the notation for distributional fields such as the GFF.

\subsubsection{Wick Notation for Gaussian Fields}

Let us consider $Y$ a Gaussian field on $\bbC$ (or a subset)
whose covariance satisfies 
\begin{equation}\label{cozz}
 \bbE[ Y(x) Y(y)]:= \log\frac{1}{|x-y|} + \log (1+|x|) + \log(1+|y|) + O(1)
 \end{equation}
Consider   a sequence  $(Y_n)_{n\ge 1}$ of Gaussian fields defined on the same space as $Y$ and such that the full process $[(Y_n),Y]$ is Gaussian. Assume this sequence has bounded covariance and converges to $Y$ in the following sense 
\begin{equation}\begin{split}\label{weaksense}
\forall n\ge 1, \forall x,y\in \bbC, \quad & |\bbE[ Y_n(x) Y_n(y)]|\le C + \bbE[ Y(x) Y(y)],\\
\forall u\in C^{\infty}_c(\bbC) \quad  & \lim_{n\to \infty}\int_\C (Y_n(x)-Y(x)) u(x)\dd^2 x=0,
     \end{split}
\end{equation}
where the first inequality has to be satisfied for an arbitrary constant $C>0$ that does not depend on $n$, and the convergence in the second line is in probability.
An example of sequence satisfying these conditions is a convolution sequence   as the one described in Section \ref{sec:backgr}. 
It can be checked via elementary computations that for any fixed $k$ the sequence  
\begin{equation}
\int_{\bbC} :Y_n^k(x): \mu(\dd^2x),
\end{equation}
is Cauchy in $\bbL_2$. We can thus define 
$\int_{\bbC} :Y^k(x):  \mu(\dd^2x)$ as the limiting random variable which does not depend on the sequence $(Y_n)_{n\ge 1}$.

\medskip

The distribution $:e^{\gamma Y}:$ can be defined using  the procedure described in Equations \eqref{circlegreen}-\eqref{law} (and detailed e.g.\ in \cite{Ber}) as soon as $\gamma<2$).
Let us however provide a concise and self-contained argument which asserts the existence of $\int_{\bbC} :e^{\gamma Y(x)}:  \mu(\dd^2x) ,$
as soon as $\gamma^2(1+\eta^{-1})<2$.
It can be checked that if $Y_n$ satisfies \eqref{weaksense} the sequence $\int_{\bbC} :e^{\gamma Y_n(x)}:  \mu(\dd^2 x)$, is Cauchy in $\bbL_2$ provided that 
$$ \int_{\bbC^2} \left(\frac{(1+|x|)(1+|y|)}{|x-y|}\right)^{\gamma^2} \mu(\dd^2 x) \mu(\dd^2 y)<\infty.$$
H\"older's  inequality and \eqref{normalization} guarantees that the above holds as soon as $\gamma^2(1+\eta^{-1})<2$.

\subsection{Gaussian space tools}

\subsection*{Girsanov/Cameron Martin Formula}

The formula states how the distribution of elements of a Gaussian Hilbert space are modified by an exponential tilt of a random variable. In our context it says that if $Y$ is a centered Gaussian field with covariance funtion $K(\cdot, \cdot)$ with displays a logarithmic divergence (similar to \eqref{cozz}) then we have for any bounded continuous fonction on $H^{-1}(\bbC)$ and any signed measure $\mu$ such that $\int_{\C^2} K(x,y) \mu( \dd^2 x)  \mu(\dd^2 y)<\infty$
\begin{equation}
\bbE[ F(Y) e^{\int_{\C} Y(x)  \mu(\dd^2 x) -\frac{1}{2} \int_{\C^2} K(x,y) \mu(\dd^2 x)  \mu(\dd^2 y)}]= \bbE[ F(Y+ \int_\C K(\cdot, y) \mu(\dd^2 y)].
\end{equation}
The formula is easily checked for finite dimensional marginals and then extended by continuity.
We are going to apply this formula also to $:e^{\gamma Y}:$ which is not a continuous fonction of $Y$. However $:e^{\gamma Y_\gep}:$ is, and  using the limiting procedure \eqref{Meps} we can deduce from the above that 
\begin{align}
& \bbE[ F(Y, \int_\C :e^{\gamma Y(x)}:  \nu(\dd^2x) ) e^{\int_\C Y(x) \mu(\dd^2x) -\frac{1}{2} \int_{\C^2} K(x,y) \mu(\dd^2x)  \mu(\dd^2 y)}]  \nonumber  \\
& = \bbE[ F(Y+ \int_{\C} K(\cdot, y) \mu(\dd^2 y), \int_{\C} e^{ \int_\C K(x, y) \mu( \dd^2 y)}  :e^{\gamma Y(x)}:  \nu(\dd^2 x) )   ] \label{girsanov}
\end{align}
for every $F$  continuous on $H^{-1}(\bbC)\times \bbR$,  $\gamma<2$ and $\nu$ with density w.r.t Lebesgue and finite total mass.
\subsection*{Positive association for positively correlated fields}

A classical result of Gaussian analysis 
\cite{Pitt} states that if $(X_i)_{i\in I}$ ($I$ finite) is a Gaussian vector such that $\bbE[X_i X_j]\ge 0$ for all $i,j\ge 1$ then for any pair of  square integrable functions $f, g: \bbR^I \to \bbR$ which are non-decreasing in all $|I|$ variables we have 
\begin{equation}\label{FKG}
 \bbE\left[ f((X_i)_{i\in I})g((X_i)_{i\in I})\right] \ge \bbE[f((X_i)_{i\in I})]\bbE[g((X_i)_{i\in I})].
\end{equation}
In order to apply this inequality to our field which are indexed by $\bbC$ and defined in a space of distribution we simply apply a limiting procedure.


\subsection{White Noise Decomposition}\label{decompex}

While it is a priori possible to write a proof of our results by working directly on the Riemann Sphere $\hat \bbC$, it turns out to be more convenient for notation to work with a field  defined in the ball $B(0,1):= \{z \ : \ |z|\le 1\}$.
Instead of working directly with the restriction of $X$ on $B(0,1)$ we are going to look at a randomly shifted version of it which we denote by $\tilde X$ that
possesses the convenient feature of having an explicit white noise decomposition for which correlations are positive on all scales, which is helpful in view of using positive association.

\medskip

As an intermediate step we introduce $\bar{X}$ the GFF on the plane with average zero on the circle of radius $1$. 
It can be obtained by considering the limit  $ X- \lim_{\epsilon \to 0}(2\pi)^{-1}\int_{0}^{2\pi} X_{\epsilon}(\theta) \dd \theta$ where $X_{\epsilon}$ is the mollified version of $X$ considered in Section \ref{sec:backgr}.
As $\int_\C X(x)   g(x) \dd^2x=0$ we also have   
\begin{equation}\label{moinlamoy}
X=\bar{X}- \frac{1}{4\pi }\int_{\C}  \bar{X}(x)  g(x) \dd^2x.
\end{equation}
The covariance of $\bar{X}$ in the ball $B(0,1)$ is given by
$\E[\bar{X}(x)\bar{X}(y)]= \ln \frac{1}{|y-x|}$
as can be checked by a simple computation of covariances.
Now to obtain a positively correlated field, we  set  $\tilde{X}:=\bar{X}+\sqrt{\ln 2}\,Y$   where $Y$ is an independent standard Gaussian variable. Equation \eqref{moinlamoy} is also satisfied with $\bar X$ replaced by $\tilde X$ and the covariance of this last field satisfies \cite[Example 2.3]{cf:RoVa}
\begin{equation*}
\E[\tilde{X}(x)\tilde{X}(y)]= \ln \frac{2}{|y-x|}= 2\int^{\sqrt{2}}_0 (t-\sqrt{|x-y|})_+ \frac{ \dd t }{t^2}+ \sqrt{2}(\sqrt{2}-\sqrt{|x-y|})_+.
\end{equation*}  
Instead of using a convolution to approximate $\tilde X$ by a smoothened field, we construct it as a limit of functional fields.
We let $(\tilde X_t(x))_{x\in B(0,1),t\ge 0}$ be a bivariate field of covariance 
\begin{equation}
 \bbE[ \tilde X_s(x) \tilde X_t(y) ]= 2 Q_0(x,y)+\int^{t\wedge s}_0 Q_u(x,y),
\end{equation}
where 
\begin{equation} 
 Q_u(x,y):=  (1- \sqrt{e^u |x-y|/2})_+.
  \end{equation}
Note that $Q_u(x,y)$ is a positive definite function \cite{cf:PaYu}.
We have in particular that, letting $K_t(x,y):= \int^{t}_0 Q_u(x,y)+2Q_0(x,y)$ denote the covariance function of the field $\tilde X_t$, there exists a constant $C$ such that for all $x,y\in B(0,1)$, $t\ge 0$ we have
\begin{equation*}
\forall x, y \in B(0,1), \quad   |K_t(x,y)- \max (\log|x-y|, t)|\le C.
\end{equation*}  

\subsection{The centered Liouville action $S_{L, (\chi_k,z_k)}$} 
 In what follows, it will be convenient to introduce the notation
\begin{equation}\label{defw}
w(z)=  e^{\sum_{k=1}^n \chi_k G(z_k,z)}.
\end{equation}

Note that in our setup, the field $\gamma \phi$ displays logarithmic singularities at $(z_k)$, cf.\ \eqref{defLiouvillefield}, and these 
singularities persists in the semiclassical limit.
As it is easier to work with a field with no such singularities, we 
replace $\phi$ by a more regular function $h$ as done in Section \ref{ops} above. 
We introduce thus the centered Liouville action $S_{L, (\chi_k,z_k)}$ on $H^1(\bbC)$ which roughly  corresponds to rewriting $S_{(\chi_k,z_k)}(\phi)$ as a function of $h=\phi- \ln (gw)$.
 It admits the following expression
 \begin{equation}\label{defcentered}
S_{L,(\chi_k,z_k)}(h)=  \frac{1}{4\pi}\int_\C ( |\nabla_z h(z)|^2 + 4 \pi \Lambda e^{h(z)}w(z)  g)\dd^2z+   \frac{1}{4 \pi  }(4- \sum_{k=1}^n\chi_k) \int_{\C} h(z) g(z) \dd^2z.
\end{equation}
One can extend $S_{L,(\chi_k,z_k)}$ to $H^{-1}(\hat \C)$ by setting it to be equal to infinity outside $H^1(\hat \C)$: the extension is convex and a good rate function in the terminology of large deviation theory.
The following claim (proved in Appendix \ref{app:equiv}) motivates our definition:

\begin{lemma}\label{lemmaequivalence}
Given  $h \in H^{1}(\hat \bbC)$ and setting
\begin{equation*}
 \phi := h+ \ln (gw),
\end{equation*}
the following identity holds
\begin{equation}  \label{mainidentityapp}
 S_{(\chi_k,z_k)}(\phi)     = S_{L,(\chi_k,z_k)}(h)+  l(  (\chi_k,z_k) )  +C_\star((\chi_k)) 
\end{equation}
where $C_{\star}((\chi_k))$ is defined by  \eqref{Theconstant},
 and
 \begin{equation}\label{Thelfunction}
l(  (\chi_k,z_k) )=  -\sum_{k=1}^n \chi_k(1-\frac{\chi_k}{4}) \ln   g(z_k)  - \frac{1}{2} \sum_{k \not = j }  \chi_k \chi_j G(z_j, z_k) 
\end{equation}

\end{lemma}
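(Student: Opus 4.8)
The strategy is to unwind the definition of $S_{(\chi_k,z_k)}(\phi)$ through the $\epsilon$-regularization \eqref{procedlimit} and substitute $\phi = h + \ln(gw)$, where $w$ is given by \eqref{defw}. The key observation is that $\ln(gw) = \ln g + \sum_k \chi_k G(z_k,\cdot)$, so $\phi$ of the form in the lemma is exactly an element of $\Theta_{(\chi_k,z_k)}$ with $H^1$-part $h$, consistent with \eqref{defTheta}. First I would expand the Dirichlet energy term: $|\nabla_z\phi|^2 = |\nabla_z h|^2 + 2\nabla_z h\cdot\nabla_z\ln(gw) + |\nabla_z\ln(gw)|^2$. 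The cross term, after integration by parts over $\C_\epsilon$, produces a bulk term $-\frac{1}{2\pi}\int h\,\Delta_z\ln(gw)$ plus boundary contributions around each $z_k$ and at infinity; since $\Delta_z\ln g = -2g$ (as $R_g=2$) and $\Delta_z G(z_k,\cdot) = -2\pi\delta_{z_k} + $ smooth (precisely $\Delta_z G(z_k,\cdot) = 2\pi g(\cdot)/(4\pi) \cdot$ something — one must track that $G$ has Laplacian $-2\pi\delta_{z_k} + \frac{1}{2}g$ so that it has vanishing $g$-mean), the bulk term should collapse to a multiple of $\int h\,g\,\dd^2z$ matching the last term of \eqref{defcentered} with coefficient $\frac{1}{4\pi}(4-\sum\chi_k)$.

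Next I would handle the $|\nabla_z\ln(gw)|^2$ term together with the counterterms in $R(\epsilon,\phi)$. This is the delicate part: $\ln(gw)$ itself has the logarithmic singularities $\chi_k\ln\frac{1}{|z-z_k|}$ and the $-4\ln|z|$ behavior at infinity, so $\int_{\C_\epsilon}|\nabla_z\ln(gw)|^2$ diverges logarithmically in $\epsilon$, and likewise the contour integrals in $R(\epsilon,\phi)$ split into a part involving $h$ (which is continuous, so those contour integrals vanish as $\epsilon\to0$ once one checks $h$ has enough regularity at $z_k$ — here one uses $h\in H^1$, hence locally in every $L^p$, and a trace/mean-value argument on small circles) and a part involving $\ln(gw)$ that exactly cancels the $2\pi\sum\chi_k^2\ln\frac1\epsilon + 32\pi\ln\frac1\epsilon$ divergences. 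The finite leftover from this purely deterministic computation is what assembles into $l((\chi_k,z_k)) + C_\star((\chi_k))$: the $G(z_j,z_k)$ cross terms in \eqref{Thelfunction} come from the interaction of the $\chi_j G(z_j,\cdot)$ pieces in $\nabla\ln w$, the $\ln g(z_k)$ terms from the interaction of $\ln g$ with the singular parts and from evaluating $\ln(gw)$ at the $z_k$ in the $-\sum\chi_k\phi(z_k)$ term (after subtracting the divergent self-interaction), and the $g\ln g$ integral in $C_\star$ from $\int|\nabla\ln g|^2$ and $\int \ln g\cdot g$ type terms.

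The cleanest way to organize this is: (i) write $S_\epsilon(\phi) = A_\epsilon(h) + B_\epsilon(h) + D_\epsilon$ where $A_\epsilon$ collects the terms quadratic/linear in $h$, $B_\epsilon$ the cross terms, and $D_\epsilon$ the purely deterministic $\ln(gw)$-terms including all counterterms; (ii) show $\lim_{\epsilon\to0}(A_\epsilon(h)+B_\epsilon(h)) = \frac{1}{4\pi}\int(|\nabla h|^2 + 4\pi\Lambda e^h w g) + \frac{1}{4\pi}(4-\sum\chi_k)\int h g = S_{L,(\chi_k,z_k)}(h)$, which is exactly \eqref{defcentered}; (iii) compute $\lim_{\epsilon\to0}D_\epsilon = l((\chi_k,z_k)) + C_\star((\chi_k))$ by a direct but careful residue/Green's-identity calculation, using the explicit formula \eqref{hatGformula} for $G$ and the explicit $g(z) = 4/(1+|z|^2)^2$. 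Step (iii) is where all the bookkeeping lives and is the main obstacle — one must be meticulous about orientations of the contour integrals, the $\kappa$ constant hidden in $G$, the behavior at $\infty$ (requiring the substitution $z\mapsto1/z$ and tracking how $\ln g$ transforms), and the precise way the divergent pieces cancel; a sign error or a dropped $\ln 2$ will misplace a term between $l$ and $C_\star$. I would double-check the final constant by testing it against the known value \eqref{Gauss} or against a symmetric special configuration. The term $e^{h(z)}w(z)g$ in \eqref{defcentered} simply equals $e^{\phi(z)}$, so the cosmological-constant term passes through unchanged once $\C_\epsilon\to\C$, using that $e^{\phi_*}$ — and more generally $e^{h}w g$ for $h\in H^1$ bounded below appropriately, here only needed along the LDP-relevant $h$'s via Fatou/monotone arguments deferred to the appendix — is integrable near each $z_k$ by the first condition in \eqref{Negcurv}.
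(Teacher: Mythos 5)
Your overall strategy --- expand $S_\epsilon(\phi)$ for $\phi=h+\ln(gw)$, integrate by parts, and track the cancellation of the $\ln\frac1\epsilon$ divergences against the counterterms in \eqref{procedlimit} --- is the same as the paper's (the paper integrates the full Dirichlet energy by parts first and then splits the bulk term, you split $|\nabla\phi|^2$ first; these are equivalent bookkeepings, and your identification of the bulk contribution $\frac{1}{4\pi}(4-\sum_k\chi_k)\int h\,g$ is correct). However, one of your specific claims is wrong and would derail the computation: the $h$-parts of the counterterm contour integrals do \emph{not} vanish. On the counterclockwise circle $|z-z_k|=\epsilon$ one has $\frac{\overline{\dd z}}{\bar z-\bar z_k}=-i\,\dd\theta$, so $-2i\chi_k\oint h\,\frac{\overline{\dd z}}{\bar z-\bar z_k}=-2\chi_k\int_0^{2\pi}h(z_k+\epsilon e^{i\theta})\,\dd\theta\to-4\pi\chi_k h(z_k)$ for continuous $h$. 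This finite nonzero limit must cancel against the boundary terms produced at $\partial B(z_k,\epsilon)$ when you integrate the cross term $2\nabla h\cdot\nabla\ln(gw)$ by parts (each of which contributes $+4\pi\chi_k h(z_k)$, since $\partial_n\ln(gw)\sim\chi_k/\epsilon$ there); this pairwise cancellation is exactly why no $h(z_k)$ term survives in \eqref{defcentered}. Discarding only one side of the pair, as your plan does, leaves a spurious $\sum_k\chi_k h(z_k)$ in the final identity.

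The second, related gap is the passage to general $h\in H^1(\hat\C)$. In two dimensions $H^1$ does not embed in $C^0$, and the circle averages of an $H^1$ function around a prescribed point need not converge (take $h=\ln\ln(1/|z-z_k|)$ near $z_k$, which lies in $H^1$), so neither "continuity" nor an $L^p$/trace argument makes the two divergent circle averages above individually convergent; they must be kept paired and one must show that the \emph{combination} converges. The paper's proof resolves this by a two-step reduction: it first establishes \eqref{mainidentityapp} for smooth $h$ (in particular for $h_*$, the regular part of $\phi_*$), and then for general $h\in H^1$ writes $\phi=(\phi-\phi_*)+\phi_*$; in the resulting cross terms the explicit contour integrals of $\phi-\phi_*=h-h_*$ are absorbed by an integration by parts against $\partial_z\ln(gw)$, and the surviving bulk term is $(1-\frac14\sum_k\chi_k)\int(h-h_*)\,g\,\dd^2z$, which involves no circle averages at the $z_k$. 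Your plan needs either this reduction or an equivalent device (e.g.\ an estimate, using only $\nabla h\in L^2$, showing that the paired boundary terms at $\partial B(z_k,\epsilon)$ differ by $o(1)$) before it closes for all of $H^1(\hat\C)$.
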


\section{Reducing the problem to partition function asymptotics}

\subsection{Introducing the statement}
The goal of this Section is to reduce the proof of our three main results  Proposition \ref{asymppart}, Theorem \ref{thSemiclassical}
and Proposition \ref{propLargedev} to a general statement.
Let us recall that  we always assume that 
$\alpha_k=\chi_k \gamma^{-1}$, $\mu=\gL \gamma^{-2}$.
\medskip

A statement concerning large deviations can be obtained by studying the asymptotic behavior of the Laplace transform of the field which is given by
\begin{equation}
 \lim_{\gamma \to 0} \frac{1}{\gamma^2} \log \frac{\langle e^{\gamma^{-1} \int_\C \psi(x) \phi(x) g(x) \dd^2x   }   \prod_{k=1}^n V_{\alpha_k}(z_k)   \rangle_{\gamma,\mu}}{ \langle  \prod_{k=1}^n V_{\alpha_k}(z_k)   \rangle_{\gamma,\mu}}.
\end{equation}
for appropriate $\psi$. On the other hand, to obtain results concerning the limiting law of $\phi- \gamma^{-1} \phi_*$, we need to compute the following limit for all bounded continuous function on $H^{-1}(\bbC)$
\begin{equation}
 \lim_{\gamma \to 0} \frac{ \langle F(\phi- \gamma^{-1} \phi_*)   \prod_{k=1}^n V_{\alpha_k}(z_k)   \rangle_{\gamma,\mu}}{ \langle  \prod_{k=1}^n V_{\alpha_k}(z_k)   \rangle_{\gamma,\mu}}.
\end{equation}
Hence we can prove both statements if we obtain 
sharp asymptotics for 
$$\langle  e^{\gamma^{-1} \int_\C \psi(x) \phi(x)  g(x) \dd^2x} F(\phi- \gamma^{-1} \phi_{*,\psi})   \prod_{k=1}^n V_{\alpha_k}(z_k)   \rangle_{\gamma,\mu},$$
where $\phi_{*,\psi}$ is a function to be determined but which coincides with $\phi_*$ when $\psi\equiv 0$.
Using our factorisation of the measure into $X$ and $a$ given by \eqref{defLiouvillefield} we can in fact compute a separate asymptotic for $a$ and $X$. 

\medskip

Before discussing things in more details let us introduce further notations used in this section. 
Considering $\psi$ a smooth function on $\hat \bbC$, we set
\begin{equation}\label{def:cphi}
c_\psi= \int_{\C}  \psi(x) g(x) \dd^2x+\sum_{k=1}^n \chi_k-4 . 
\end{equation} 
We also introduce $h_\psi\in   \bar H^1(\hat{\C}):=\{ h\in H^1(\hat{\C}) \ :  \ \int_\C h(x) g(x) \dd^2x=0\} $  the unique solution to the Liouville equation     (see appendix \ref{app:exist})
\begin{equation}
\begin{cases}
 \label{equationdebase1}
\Delta_{g} h &= -2 \pi \left(\psi- \frac{1}{4\pi} \int_\C \psi(x)  g(x) \dd^2x \right)+2 \pi c_\psi \left(\frac{ w e^{h}}{\int _\C w(x) e^{h(x)}   g(x) \dd^2 x  }-\frac{1}{4 \pi}\right),\\
  \int_\C h(x)   g(x) \dd^2x&=0.
\end{cases}
\end{equation}
and set 
\begin{equation}\label{mupsi}
\mu_{\psi}(\dd^2 x):= \frac{ e^{h_{\psi}(x)}w(x)   g(x) \dd^2 x}{\int_\C  e^{h_{\psi}(y)}w(y)   g(y) \dd^2 y}= e^{\bar h_{\psi}} \dd^2 x.
\end{equation} 
Note that $\mu_{\psi}(\dd^2 x)$ has volume $1$.

\medskip

The asymptotic for the integral in $a$ is a standard computation.
Using the following variant of Stirling's formula  
$$\gG(x-1)=\gG(x+1)/(x(x-1)) \sim \sqrt{2\pi} x^{-3/2} \left(\frac{x}{e}\right)^x,$$ 
we have 
\begin{multline}
\int_{\bbR} e^{a (s_{\gamma}+\frac{\int_\C \psi(x)   g(x) \dd^2 x}{\gamma^2})} e^{-\mu e^a} \dd a=
\int_{\bbR} e^{a\left(\frac{c_{\psi}}{\gamma^2} -1\right)} e^{- \frac{\gL}{\gamma^2} e^a} \dd a\\
=\left( \frac{\gamma^2}{\gL} \right)^{\frac{c_{\psi}}{\gamma^2} -1}\gG\left(\frac{c_{\psi}}{\gamma^2} -1\right)
= \gamma\gL\sqrt{\frac{2\pi}{c_{\psi}^3}} e^{\frac{c_{\psi}}{\gamma^2}[\ln (c_{\psi}/\gL)-1]}(1+o(1)).
\end{multline}
Moreover (and this is only of interest when $\psi=0$),
we have for any bounded continuous function $F$ on $\bbR$
\begin{multline}\label{ziax}
 \int_{\bbR} F(\gamma^{-1}a-\gamma^{-1}\log(c_{\psi}/\gL))e^{a( s_{\gamma}+\frac{\int_\C \psi(x) g(x) \dd^2x}{\gamma^2})} e^{-\mu e^a} \dd a \\=
\gamma\gL\frac{\sqrt{2\pi}}{c_{\psi}} e^{\frac{c_{\psi}}{\gamma^2}[\ln (c_{\psi}/\gL)-1]} \left(\int_{\bbR} F(x) e^{-\frac{c_\psi x^2}{2}} \dd x +o(1)\right)
\end{multline}
showing that after recentering the variable $\gamma^{-1} a$ converges to a Gaussian
of variance $c_{\psi}^{-1}$. 

\medskip

The part concerning $X$ is the main probabilistic estimate of the paper. Given $F$ a continuous bounded function in $H^{-1}(\bbC)$ we want to determine the precise asymptotic of the following Laplace functional
\begin{multline*}
L_\gamma (\psi,F) =\E\left[ F(X- \gamma^{-1}[ \log  Z_0 + \bar h_\psi ])  e^{\frac{1}{\gamma}  \int_\C  \psi(x)  (X(x)-\gamma^{-1} \ln Z_0) g(x) \dd^2x  } Z_0^{-s}  \right]  \\
=\E\left[ F(X- \gamma^{-1}[ \log  Z_0 +\bar h_\psi]) e^{\frac{1}{\gamma}  \int_\C  \psi(x)  X(x) g(x) \dd^2x  }  (Z_0)^{1-\frac{c_{\psi}}{\gamma^2}} \right].
\end{multline*}

\begin{proposition}\label{prop:asymp}
If $\psi$ is such that $c_{\psi}>0$, then we have in the small $\gamma$ asymptotics  
\begin{align*}
 & L_\gamma (\psi,F)  \stackrel{\gamma \to 0}{=} e^{\gamma^{-2} H(\psi)}(\int_\C e^{h_{\psi}(x)} w(x)  g(x) \dd^2x) \\ 
 & \times \left(\E[  F(X-\int_\C X(x) \mu_{\psi}(\dd^2x))  e^{\frac{c_{\psi}}{2} \left( (\int_\C X(x) \mu_{\psi} (\dd^2x))^2- \int_\C :X^2(x):  \mu_{\psi}(\dd^2x) \right)} ]+o(1)\right)
\end{align*}
with $$H(\psi):=-\frac{1}{4 \pi } \int_\C  |\nabla_{x} h_\psi(x)|^2 \dd^2x -c_\psi \ln \int_\C w(x) e^{h_\psi(x)} g(x) \dd^2x+ \int_\C h_\psi(x) \psi(x) g(x) \dd^2x.  $$


\end{proposition}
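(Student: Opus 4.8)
\textbf{Proof plan for Proposition \ref{prop:asymp}.}

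The plan is to perform a Laplace-type (saddle point) analysis on the random variable $Z_0$ directly, exploiting that the exponent $1-\frac{c_\psi}{\gamma^2}$ in $(Z_0)^{1-c_\psi/\gamma^2}$ diverges as $\gamma\to 0$, so that the main contribution comes from a vanishingly small neighbourhood of the (deterministic) location where the relevant functional is minimized. First I would rewrite $Z_0 = \int_\C e^{\gamma\sum_k\alpha_k G(z_k,x) - \gamma^2\kappa/2} M_\gamma(\dd^2x) = \int_\C w(x)^{\gamma\cdot(1/\gamma)}\cdots$; since $\alpha_k = \chi_k/\gamma$ we have $\gamma\sum_k\alpha_k G(z_k,x) = \sum_k\chi_k G(z_k,x) = \ln w(x)$, so $Z_0 = \int_\C w(x)\, :e^{\gamma X(x)}: g(x)\,\dd^2x \,(1+o(1))$ after absorbing the $e^{-\gamma^2\kappa/2}$ factor. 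The incorporation of the tilt $e^{\gamma^{-1}\int_\C\psi X g\,\dd^2 x}$ is handled by Girsanov/Cameron–Martin (the formula recalled in the excerpt): tilting by this exponential shifts $X$ by $\gamma^{-1}\int_\C \psi(y) G(\cdot,y) g(y)\,\dd^2y =: \gamma^{-1} u_\psi$, and simultaneously produces a deterministic Gaussian factor $\exp(\tfrac{1}{2\gamma^2}\int \psi G \psi)$ and multiplies the integrand of $Z_0$ by $e^{u_\psi(x)}$. So after this change of measure we are reduced to analyzing $\E[ F(\cdot)\, (\tilde Z_0)^{1-c_\psi/\gamma^2}]$ for a modified chaos $\tilde Z_0 = \int_\C \tilde w(x) :e^{\gamma X(x)}: g(x)\,\dd^2 x$ with $\tilde w = w\, e^{u_\psi}$, times the explicit exponential prefactor coming from Girsanov.

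The heart of the matter is then a quenched/annealed-type expansion of $\tilde Z_0$ around its small-$\gamma$ behaviour. The key observation is that $:e^{\gamma X(x)}: = 1 + \gamma X(x) + \tfrac{\gamma^2}{2}:X(x)^2: + O(\gamma^3)$ in an $L^2$ sense (using the Wick expansion \eqref{supertaylor} and the Wick orthogonality \eqref{scalar}), so that
\[
\tilde Z_0 = I_0\Big( 1 + \gamma \,\tfrac{\int \tilde w X g\,\dd^2 x}{I_0} + \tfrac{\gamma^2}{2}\,\tfrac{\int \tilde w :X^2: g\,\dd^2 x}{I_0} + \dots\Big), \qquad I_0 := \int_\C \tilde w(x) g(x)\,\dd^2x.
\]
Therefore $\log \tilde Z_0 = \log I_0 + \gamma G_1 + \gamma^2(G_2 - \tfrac{1}{2}G_1^2) + o(\gamma^2)$ where $G_1 = I_0^{-1}\int \tilde w X g\,\dd^2x$ and $G_2 = \tfrac12 I_0^{-1}\int \tilde w :X^2: g\,\dd^2x$, and
\[
(\tilde Z_0)^{1 - c_\psi/\gamma^2} = I_0^{1-c_\psi/\gamma^2}\, e^{(\gamma^2 - c_\psi)(G_1 \gamma^{-1} + (G_2 - \tfrac12 G_1^2) + o(1))} = I_0\, I_0^{-c_\psi/\gamma^2}\, e^{-c_\psi \gamma^{-1} G_1}\, e^{-c_\psi(G_2 - \tfrac12 G_1^2)}(1+o(1)).
\]
Now $e^{-c_\psi \gamma^{-1} G_1}$ is itself a Gaussian exponential tilt (of order $\gamma^{-1}$, but $G_1$ is a bounded-variance Gaussian functional), so by a second application of Girsanov it shifts $X$ by $-c_\psi\gamma^{-1}$ times the covariance kernel of $G_1$, namely it shifts $X$ by a quantity $-\gamma^{-1}\bar h_\psi + o(\gamma^{-1})$ where $\bar h_\psi$ is arranged so that the log-density and the PDE \eqref{equationdebase1} close up; this is exactly what forces the definition of $h_\psi$ as the solution of \eqref{equationdebase1}. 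After this shift, the deterministic exponential factors combine: $I_0^{-c_\psi/\gamma^2}$ together with the Girsanov quadratic terms produce $e^{\gamma^{-2} H(\psi)}$ with $H(\psi)$ of the stated form (identifying $-\tfrac{1}{4\pi}\int|\nabla h_\psi|^2$ and the remaining terms via the variational characterization of $h_\psi$), the $G_1^2$ and $G_2$ terms turn into the claimed $\exp(\tfrac{c_\psi}{2}((\int X\mu_\psi)^2 - \int :X^2:\mu_\psi))$ in the new measure, and $I_0$ becomes the prefactor $\int_\C e^{h_\psi}w g\,\dd^2x$ after the shift is absorbed. The argument $F$ of the test function, after the net shift by $-\gamma^{-1}[\log Z_0 + \bar h_\psi]$, converges to $X - \int X\mu_\psi$, giving the $\mu_\psi$-centering.

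The main obstacle — and where most of the real work lies — is making the Taylor expansion of $\tilde Z_0$ and the exponentiation $\tilde Z_0^{1-c_\psi/\gamma^2}$ rigorous: one must show that the contribution of the event $\{\tilde Z_0 \text{ far from } I_0\}$ is negligible even after being raised to the large power $1 - c_\psi/\gamma^2$, control the negative moments of $\tilde Z_0$ uniformly in $\gamma$, and justify interchanging limits and expectations against the (only) bounded continuous $F$. This requires the moment estimates of Section 5 and, crucially, the positive-association / FKG input from the white-noise decomposition (Section \ref{decompex}) to get uniform control of $\E[\tilde Z_0^{-s}]$-type quantities; presumably Lemma \ref{expintegra} referenced later is exactly the integrability statement needed here. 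I would therefore structure the proof as: (1) Girsanov reduction absorbing the $\psi$-tilt; (2) a pathwise expansion of $\log \tilde Z_0$ on a high-probability event together with a separate crude bound off that event; (3) the second Girsanov shift by the $G_1$-term, identifying $h_\psi$ and the PDE \eqref{equationdebase1}; (4) collecting the deterministic exponents into $e^{\gamma^{-2}H(\psi)}$ using the weak formulation of \eqref{equationdebase1}; (5) dominated convergence to pass to the limit in the remaining expectation, obtaining the $\mu_\psi$-tilted Gaussian expression.
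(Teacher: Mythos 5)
Your overall strategy is the same as the paper's: a Cameron--Martin recentering of $X$, a second-order expansion of $\ln Z_0$ producing the Gaussian density $e^{\frac{c_\psi}{2}((\int X\dd\mu_\psi)^2-\int :X^2:\dd\mu_\psi)}$, and a uniform-integrability input (your step (5)) which is exactly Proposition \ref{expintegra}. You also correctly identify that this last point is where the real work lies.

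There is, however, a genuine gap in your step (3), the identification of the shift. After your first Girsanov tilt by $\gamma^{-1}\int\psi X\,g\,\dd^2x$, the chaos weight becomes $\tilde w=we^{u_\psi}$ with $u_\psi=\int G(\cdot,y)\psi(y)g(y)\dd^2y$, and your second tilt by $e^{-c_\psi\gamma^{-1}G_1}$ shifts $X$ by $-c_\psi\gamma^{-1}I_0^{-1}\int G(\cdot,y)\tilde w(y)g(y)\dd^2y$. The total shift is therefore $\gamma^{-1}$ times the solution of the \emph{linearized} equation (with density $\tilde w/I_0$ on the right-hand side), whereas the correct shift is $\gamma^{-1}h_\psi$ with $h_\psi$ solving the \emph{nonlinear} equation \eqref{equationdebase1}, whose right-hand side involves $we^{h_\psi}$ itself. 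These differ by an $O(1)$ function, i.e.\ an $O(\gamma^{-1})$ error in the shift — not $o(\gamma^{-1})$ as you assert — and since the shift enters the exponent multiplied by $\gamma^{-1}$ (and the chaos is raised to the power $1-c_\psi\gamma^{-2}$), this corrupts the prefactor $e^{\gamma^{-2}H(\psi)}$ at leading exponential order; moreover the argument of $F$ would not converge. Your two-step scheme is really the first Picard iterate of a fixed-point problem whose fixed point is $h_\psi$; to get the sharp constant you must use the fixed point itself. The paper avoids the issue by performing a \emph{single} tilt by $\gamma^{-1}Y_\psi$ with
\begin{equation*}
Y_\psi:=\int_\C\Bigl(\psi(x)-c_\psi e^{\bar h_\psi(x)}w(x)\Bigr)X(x)\,g(x)\,\dd^2x,
\end{equation*}
defined directly in terms of the (already constructed) solution $h_\psi$, and then checks via the integral form of \eqref{equationdebase1} that $\E[Y_\psi X(x)]=h_\psi(x)$ exactly and $\var(Y_\psi)=\frac{1}{2\pi}\int|\nabla h_\psi|^2$, after which the deterministic factors assemble into $e^{\gamma^{-2}H(\psi)}$ with no error term to control. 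Replacing your steps (1) and (3) by this single exact tilt (and keeping your steps (2), (4), (5)) would make the plan correct.
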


Now we can combine Proposition \ref{prop:asymp} with Equation \eqref{ziax} to prove our main results.

\subsection{Proof of Proposition \ref{asymppart} and Theorem \ref{thSemiclassical}}

{\bf In what follows and in order to have more concise formulae, we will sometimes write $\dd   g$ in place of $g(x) \dd^2x$ or $\dd \mu^*$ in place of $\mu^*(\dd^2x)$, etc... Also we will simply write $\int$ without indicating the set on which we integrate: this should be clear from the context.}

\vspace{0.1 cm}

To obtain the limit of correlations, we use Proposition \ref{prop:asymp}  and Equation \eqref{ziax} for $\psi \equiv 0$ and $F\equiv 1$. Notice that $\mu_{0}:=\mu_{\psi=0}$  defined by \eqref{mupsi} coincides with  $\mu^*$ defined by \eqref{mustar}  since  
by definition $h_{0}+ \log w   g$ is the solution of \eqref{Liouv}+\eqref{lcfasympt} up to constant, and $c_0:= \sum_{k=1}^n \chi_k -4$.
We obtain, recalling \eqref{funamental} and   \eqref{Thelfunction}
\begin{equation}
\begin{split}
 K({\bf z}) &\stackrel{\gamma\to 0}{\sim}  
 4\gamma^{-1} e^{-\frac{\kappa}{2}(\sum_{k=1}^n \chi_k-4)} e^{\frac{1}{\gamma^2} \left(\frac{\kappa}{2}\sum_{k=1}^n \chi^2_k- 8\kappa-l((\chi_k,z_k))\right)}\\
 \int_{\bbR} e^{a s_{\gamma}} e^{-\mu e^a} \dd a &\stackrel{\gamma\to 0}{\sim}  \frac{\gL\gamma\sqrt{2\pi}}{c_0^{3/2}}e^{\frac{1}{\gamma^2}c_0[\ln (c_0/\gL)-1]},\\
 \bbE[Z_0^{-s}]& \stackrel{\gamma \to 0}{\sim} (\smallint e^{h_0}w \dd  g  )e^{-\frac{1}{\gamma^2}\left( \frac{1}{4 \pi } \int  |\nabla_{z} h_0|^2 \dd^2z+c_0 \ln(\smallint e^{h_0}w \dd  g  ) \right)} \E[   e^{\frac{c_{0}}{2} \left( (\int X \dd \mu^*)^2- \int :X^2: \dd \mu^* \right)} ] 
 \end{split}
\end{equation}
with $h_0$ given by  \eqref{equationdebase1} with $\psi=0$. Altogether we obtain
\begin{multline}
 \langle  \prod_{k=1}^n V_{\alpha_k}(z_k)   \rangle_{\gamma,\mu} 
 \stackrel{\gamma\to 0}{\sim} 
  \frac{4\gL \sqrt{2\pi}}{c_0^{3/2}}e^{\frac{1}{\gamma^2}c_0[\ln (c_0/\gL)-1]}   e^{-2\kappa-\frac{\kappa}{2}\sum_{k=1}^n \chi_k} e^{\frac{1}{\gamma^2} \left(\frac{\kappa}{2}\sum_{k=1}^n \chi^2_k- 8\kappa-l((\chi_k,z_k))\right)}\\
  \times (\smallint e^{h_0}w \dd  g  )e^{-\frac{1}{\gamma^2}\left( \frac{1}{4 \pi } \int  |\nabla_{z} h_0|^2 \dd^2z+c_0 \ln(\smallint e^{h_0}w \dd  g  ) \right)} \E[   e^{\frac{c_{0}}{2} \left( (\int X \dd \mu^*)^2- \int :X^2: \dd \mu^* \right)} ] .\label{rem:equiv}
\end{multline}
 In the appendix \ref{app:exist}, we prove that the quantity (see \eqref{defJ})
$$J_0(h_0):= \frac{1}{4 \pi } \int  |\nabla_{z} h_0|^2 \dd^2z+c_0 \ln(\smallint e^{h_0}w \dd  g) $$
can be related to the quantity $\min_hS_{L,(\chi_k,z_k)}(h)$ by the relation
$$\min_hS_{L,(\chi_k,z_k)}(h)=-c_0[\ln (c_0/\gL)-1]+J_0(h_0).$$ 
This is the content of Proposition \ref{prop:legendre}. Consequently Lemma \eqref{lemmaequivalence} allows us express $J_0(h_0)$ as
$$J_0(h_0)= \min_\phi S_{(\chi_k,z_k)}(\phi)-l((\chi_k,z_k))-C_\star((\chi_k)) +c_0[\ln (c_0/\gL)-1].$$
Furthermore, integrating \eqref{unitvol} on the sphere and using
$$ \int \log w \dd  g=0 \quad \text{ and } \quad \frac{1}{4\pi} \int \log  g \dd   g= 2(\ln 2 -1),$$
we get that 
$$ \int e^{h_0}w \dd   g=\int e^{\phi_*-\frac{1}{4\pi}\phi_*}\dd^2 z \,e^{2(\ln 2 -1)}.$$
By plugging these relations into \eqref{rem:equiv}, we get the statement of Proposition  \ref{asymppart}.


\bigskip
For Theorem \ref{thSemiclassical}, we can perfom the same computation including a function.
Recalling \eqref{defLiouvillefield} we obtain from Proposition \ref{prop:asymp} that under $\bbP_{\mu,(\alpha_k,z_k)}$ in the semi-classical limit $\gamma X-\log Z_0$ converges to $\bar h_0$, and from \eqref{ziax} that $a$ converges to $\log (c_{0}/\gL)$.
Hence 
$$\gamma \phi= \gamma X -\log Z_0+a +\log (w  g) + \gamma^2 \left( \frac{\log   g}{4} + \frac{\kappa}{2}\right)$$
converges to $ \bar h_0 + \log (c_{0}/\gL)+ \log (w g)$.
Note that up to a constant shift, the above function is equal to $\phi_*$.
To check that the involved constant is $0$, it is sufficient to observe that 
\begin{equation}
 \int e^{\bar h_0 + \log (c_{0}/\gL)+ \log (w  g)} \dd z= \frac{c_0}{\gL}= \int e^{\phi_*} \dd z.
\end{equation}
Concerning the convergence of $\phi-\gamma^{-1} \phi_*$,
the corresponding limit corresponds to the independent sum of  $ \gamma^{-1}(a- \log (c_{0}/\gL))+(X-\gamma^{-1}(\log Z_0+ \bar h_0))$. Equation \eqref{ziax} imples that the first term converges to a Gaussian of variance $c^{-1}_0$ while Proposition \eqref{prop:asymp}
guarantees the convergence of the second term to the prescribed field.

\subsection{Proof of Proposition \ref{propLargedev}}

According to relation \eqref{mainidentityapp}, the claim is equivalent to proving that the field $\gamma X=\gamma \phi-\sum_k \chi_k G(z_k, \cdot)- \ln g- \ln \frac{\sum_k \chi_k-4}{\Lambda} $ satisfies a large deviation principle on $H^{-1}(\hat \C)$ with good rate function the centered Liouville action (shifted by its minimum) $S_{L,(\chi_k,z_k)}(.)-S_{L,(\chi_k,z_k)}(h_{\ast}) $. Here we adopt the notations of section \ref{Convexconsi} in the appendix. With these notations, Proposition \ref{prop:asymp} implies straightforwardly for all $\psi$ that 
\begin{equation*}
\frac{1}{\gamma^2}  \ln  \E_{\frac{\Lambda}{\gamma^2},(\frac{\chi_k}{\gamma}, z_k)}[e^{\int_{\C} \psi (z) X(z) g(z) \dd^2 z    }]    \underset{\gamma \to 0}{\rightarrow}    \textsc{f} (\psi)- \textsc{f} (0). 
\end{equation*}
In the language of large deviation theory, $\textsc{f} (\psi)- \textsc{f} (0)$ is the limit of the logarithmic moment generating function of $\gamma X$ (under $\P_{\frac{\Lambda}{\gamma^2},(\frac{\chi_k}{\gamma}, z_k)}$). In Proposition \ref{expintegra}, we prove that the  distribution of $\gamma X$ is exponentially tight under $\P_{\frac{\Lambda}{\gamma^2},(\frac{\chi_k}{\gamma}, z_k)}$. The Legendre transform of $\textsc{f} (\psi)- \textsc{f} (0)$ satisfies $(\textsc{f} -\textsc{f} (0)) ^{\ast}(h)= S_{L,(\chi_k,z_k)}(h)-S_{L,(\chi_k,z_k)}(h_{\ast}) $. Thanks to Lemma \ref{lemmaexposed} on the exposed points of $S_{L,(\chi_k,z_k)}(.)$, we can conclude by using Baldi's theorem in \cite{dembo}: see theorem 4.5.20 page 157.

\subsection{Proof of relation \eqref{Polyacc}}
 
Here we prove relation \eqref{Polyacc} by using the BPZ differential equations established in \cite{KRV}.  
We have the following BPZ differential equation
\begin{align*}
 & \frac{4}{\gamma^2}\partial_{zz}^2\langle   V_{- \frac{\gamma}{2}}(z) \prod_l V_{\alpha_l}(z_l)   \rangle_{\gamma,\frac{\Lambda}{\gamma^2}}   + \sum_k \frac{\Delta_{\alpha_k}}{(z-z_k)^2}  \langle  V_{- \frac{\gamma}{2}}(z) \prod_l  V_{\alpha_l}(z_l)  \rangle_{\gamma,\frac{\Lambda}{\gamma^2}}  \\  
 & + \sum_k \frac{1}{z-z_k}  \partial_{z_k} \langle  V_{- \frac{\gamma}{2}}(z) \prod_l  V_{\alpha_l}(z_l)  \rangle_{\gamma,\frac{\Lambda}{\gamma^2}}     =  0  ,
\end{align*}
where $V_\alpha(z)= e^{\alpha (X(z)+\frac{Q}{2} \ln g(z)+c)}$ and $\Delta_\alpha= \frac{\alpha}{2}  (Q-\frac{\alpha}{2})$. We write $\alpha_k= \frac{\chi_k}{\gamma}$ and set $\eta>0$ small. We consider smooth functions $u_k$ with compact support in $B(z_k,\eta)$ and $u$ a smooth function with compact support in $\C \setminus \cup B(z_k,\eta)$. Using \eqref{funamental} and \eqref{defLiouvillefield}, we have for any smooth function $f$ with compact support in $\C \setminus \cup B(z_k,\eta)$ and $z'_k \in B(z_k,\eta)$ the following identity (the definition of $K$ is given by  \eqref{Kdef}) 
\begin{align*}
&  \langle   \left ( \int_{\C} f(z)  V_{- \frac{\gamma}{2}}(z) \dd^2 z   \right ) \prod_{l=1}^n  V_{\alpha_l}(z'_l)  \rangle_{\gamma,\frac{\Lambda}{\gamma^2}} \\
&  = K(\textbf{z}) \:   \int_0^\infty  y^{ \frac{\sum_k \alpha_k-2Q }{\gamma}-1-\frac{1}{2}}  e^{-\frac{\Lambda}{\gamma^2} y}  \dd y  \: \E \left [       \left (   \int_\C f(z)  e^{-\frac{\gamma}{2}  (X(z)  +\frac{Q}{2} \ln g(z)  + \sum_k \alpha_k G(z'_k,z)       }    \dd^2 z   \right)  Z_0^{\frac{1}{2}} Z_0^{-\frac{\sum_k \alpha_k-2Q}{\gamma}}   \right  ] 
\end{align*}
where $e^{-\frac{\gamma}{2} X (z)}$  denotes the limit of $\epsilon^{\frac{\gamma^2}{8}}e^{-\frac{\gamma}{2} X_{\epsilon} (z)}$ as $\epsilon$ goes to $0$. 
By using Proposition \ref{prop:asymp} (in fact a slight extension of the proposition with $F$ given by an appropriate integral of the exponential function), we get the following equivalent (up to a constant)
\begin{align}
&  \E \left [       \left (   \int_\C f(z)  e^{-\frac{\gamma}{2}  (X (z)  +\frac{Q}{2} \ln g(z)  + \sum_k \alpha_k G(z'_k,z)         }   \dd^2 z    \right  )   Z_0^{-\frac{\sum_k \alpha_k-2Q}{\gamma}}   \right  ]  \nonumber \\
&  \underset{\gamma \to 0}{\sim}    \left (  \int_{\C} f(z) e^{-\frac{\phi_* (z)}{2}}  \dd^2z   \right )     e^{-\frac{S_{(\chi_k,z'_k)}(\phi_*  )}{\gamma^2}} \label{firstequiv}
\end{align}
Applying \eqref{firstequiv} with $f(z)= \partial_{zz}^2  u(z) $ and using the fact that \eqref{firstequiv} is uniform for $z'_k \in B(z_k,\eta)$ (and $f$ has compact support in $\C \setminus \cup B(z_k,\eta)$),  we get  by integration by parts
\begin{align*}
& \int_{\C^{n+1}} u(z) \prod_{k=1}^n u_k (z'_k) \partial_{zz}^2\langle   V_{- \frac{\gamma}{2}}(z) \prod_l V_{\alpha_l}(z'_l)   \rangle_{\gamma,\frac{\Lambda}{\gamma^2}}  \dd^2z \prod_{k=1}^n \dd^2 z'_k \\ 
& \underset{\gamma \to 0}{\sim}    \left (  \int_{\C^{n+1}}  \partial_{zz}^2  u(z)  \prod_{k=1}^n u_k (z'_k)   e^{-\frac{\phi_* (z)}{2}} e^{-\frac{S_{(\chi_k,z'_k)}(\phi_*  )}{\gamma^2}}   \dd^2z  \prod_{k=1}^n \dd^2z'_k   \right )      \\
\end{align*}
We also get for all $j$ that (up to the same constant as in the former equivalents)
\begin{align*}
& \int_{\C^{n+1}} u(z) \prod_{k=1}^n u_k (z'_k) \frac{1}{z-z'_j} \partial_{z'_j} \langle   V_{- \frac{\gamma}{2}}(z) \prod_l V_{\alpha_l}(z'_l)   \rangle_{\gamma,\frac{\Lambda}{\gamma^2}}  \dd^2z \prod_{k=1}^n \dd^2 z'_k \\ 
& \underset{\gamma \to 0}{\sim}    - \left (  \int_{\C^{n+1}}  u(z)  \prod_{k \not =j}^{n-1} u_k (z'_k)   \partial_{z'_j}  \frac{ u_j(z'_j) }{ z-z'_j } e^{-\frac{\phi_* (z)}{2}}  e^{-\frac{S_{(\chi_k,z'_k)}(\phi_*  )}{\gamma^2}}   \dd^2z \prod_{k=1}^n \dd^2 z'_k  \right )     \\
& \underset{\gamma \to 0}{\sim}    \frac{1}{\gamma^2} \left (  \int_{\C^{n+1}}  u(z)  \prod_{k \not =j}^{n-1} u_k (z'_k)     \frac{ u_j(z'_j) }{ z-z'_j }   \partial_{z'_j}  S_{(\chi_k,z'_k)}(\phi_*  ) e^{-\frac{\phi_* (z)}{2}}  e^{-\frac{S_{(\chi_k,z'_k)}(\phi_*  )}{\gamma^2}}   \dd^2z \prod_{k=1}^n \dd^2 z'_k  \right )     \\
\end{align*}

One can then conclude by taking the equivalent $\gamma \to 0$ of the BPZ equation integrated with respect to $ u(z) \prod_{k=1}^n u_k (z'_k)  \dd^2z \prod_{k=1}^n \dd^2 z'_k$ and then taking the limit $\eta$ to $0$.

\subsection{Proof of Proposition \ref{prop:asymp}}

For simplicity we write the proof in the case $F\equiv 1$.
We explain how to adapt the proof for general $F$ in the end.

Note that using our notation we have 
\begin{equation}
Z_0  = \int :e^{\gamma X}: w\dd g=  \left(\smallint e^{h_{\psi}} w \dd   g\right)\left(\smallint :e^{\gamma X}:e^{-h_{\psi}} \dd \mu_{\psi}\right)
\end{equation}
and hence we can rewrite the quantity we wish to estimate in the following manner
\begin{equation}\label{woopz}
\E\left[   e^{\frac{1}{\gamma}  \int  \psi  X \dd  g  } Z_0^{1-\frac{c_{\psi}}{\gamma^2}}  \right]
= \left(\smallint e^{h_{\psi}} w \dd   g\right)^{-\frac{c_{\psi}}{\gamma^2}}\E\left[ Z_0 e^{ -\frac{c_{\psi}}{\gamma^{2}} \left( \ln \int :e^{\gamma X}: e^{-h_{\psi}}  \dd \mu_{\psi}- \gamma \int X\dd \mu_{\psi}\right)} e^{\frac{1}{\gamma}\left(\int  \psi  X \dd   g- c_{\psi}\int  X \dd \mu_{\psi} \right) } \right].
\end{equation}
The first part of our proof consists in checking that  the exponential tilt produced by the second exponential factor exactly cancels the $e^{-h_{\psi}}$ present in the first exponent.

\medskip

\medskip
Then we need to check that after taking into account this exponential tilt, the integral converges. This can be achieved by showing convergence in probability of the integrand and uniform integrability.
This is the content of the following Proposition, whose proof is detailed in the next section.

\begin{proposition}\label{expintegra}
Assuming that $\mu$ is a probability on $\bbC$ satisfying \eqref{normalization} and $\inf_{\bbC} \rho >0$, we have
\begin{equation}\label{labound}
 \sup_{\gamma \in (0,1]}\E\left[  e^{-\frac{\alpha}{\gamma^2}  (  \ln \int :e^{\gamma X}: \dd \mu  - \gamma \int X \dd \mu     )    }   \right] 
 =\E\left[  e^{- \alpha  [ \int  :X^2:\dd \mu  -  (\int X \dd \mu )^2     ]} \right]
\end{equation}
Furthermore we have the following almost sure convergence 
\begin{equation}\label{enproba}
\lim_{\gamma \to 0} \gamma^{-2} (  \ln \smallint :e^{\gamma X}: \dd \mu  - \gamma \smallint X \dd \mu)=   \smallint  :X^2:\dd \mu  -  (\smallint X \dd \mu )^2 
\end{equation}
\end{proposition}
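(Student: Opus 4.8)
\noindent Write $m:=\int X\,\dd\mu$, $\Phi_\gamma:=\int :e^{\gamma X}:\,\dd\mu$ and $R:=\int :X^2:\,\dd\mu-m^2$; by the discussion of Section~\ref{wick} (and \eqref{normalization}) these are genuine $\bbL_2$ random variables. The plan is to establish, separately, the almost sure convergence \eqref{enproba}, i.e.\ $\gamma^{-2}(\ln\Phi_\gamma-\gamma m)\to R$, and the identity \eqref{labound}.

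\medskip
\noindent\textbf{The convergence \eqref{enproba}.} I would expand $:e^{\gamma X}:=\sum_{n\ge0}\frac{\gamma^n}{n!}:X^n:$ and integrate term by term against $\mu$. This is legitimate: by \eqref{scalar}, $\E[(\int :X^n:\dd\mu)^2]=n!\int_{\bbC^2}G^n\,\dd\mu\,\dd\mu$, and H\"older's inequality together with \eqref{normalization} gives $\int_{\bbC^2}e^{\beta G(x,y)}\,\dd\mu(x)\,\dd\mu(y)<\infty$ for $\beta$ small enough, so $\sum_n\frac{\gamma^n}{n!}\|\int :X^n:\dd\mu\|_2<\infty$ for $\gamma$ small and the series for $\Phi_\gamma$ converges in $\bbL_2$ and almost surely, locally uniformly in $\gamma$. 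Hence $\Phi_\gamma=1+\gamma m+\tfrac{\gamma^2}{2}\int :X^2:\dd\mu+O(\gamma^3)$ with an a.s.\ finite random remainder, so that $e^{-\gamma m}\Phi_\gamma=1+\tfrac{\gamma^2}{2}R+O(\gamma^3)$, and $\ln(1+u)=u+O(u^2)$ yields \eqref{enproba}.

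\medskip
\noindent\textbf{The identity \eqref{labound}.} Since $e^{-\alpha\gamma^{-2}(\ln\Phi_\gamma-\gamma m)}\ge0$, Fatou's lemma and the convergence just proved give $\liminf_{\gamma\to0}\E[e^{-\alpha\gamma^{-2}(\ln\Phi_\gamma-\gamma m)}]\ge\E[e^{-\alpha R}]$, hence $\sup_{\gamma\in(0,1]}(\,\cdot\,)\ge\E[e^{-\alpha R}]$; everything therefore reduces to the matching \emph{uniform} upper bound $\E[e^{-\alpha\gamma^{-2}(\ln\Phi_\gamma-\gamma m)}]\le\E[e^{-\alpha R}]$ for every $\gamma\in(0,1]$. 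My plan for this is a Kahane-type Gaussian comparison. First, absorb the random factor $e^{\alpha m/\gamma}$ appearing in $(\Phi_\gamma e^{-\gamma m})^{-\alpha/\gamma^2}=\Phi_\gamma^{-\alpha/\gamma^2}e^{\alpha m/\gamma}$ via the Cameron--Martin formula \eqref{girsanov} (tilting $X$ along $\tfrac{\alpha}{\gamma}\mu$), which turns the statement into a comparison of negative moments of GMC-type integrals. Then pass, through \eqref{moinlamoy}, to the positively correlated field $\tilde X$ on $B(0,1)$ whose white-noise decomposition $(\tilde X_t)_t$ of Section~\ref{decompex} has covariances that are positive on every scale, and run a scale-by-scale Gaussian interpolation: the convexity of $u\mapsto u^{-\beta}$, the positive-association inequality \eqref{FKG} (whose hypothesis is exactly this positivity), and the logarithmic singularity of the covariance combine to produce the desired inequality. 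The assumptions $\inf_{\bbC}\rho>0$ and \eqref{normalization} are used here to control $\Phi_\gamma$, and hence its negative moments, from below and to ensure the relevant double integrals converge.

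\medskip
\noindent\textbf{Main obstacle.} The hard part is the uniform upper bound. The limiting quantity $R$ is a renormalized Wick square --- a ``$\gamma=0$'' critical-type object --- rather than the exponential of a first-order Gaussian chaos, and the sphere covariance $G$ is sign-indefinite, so there is no naive Kahane comparison between the fields $\gamma X$ and $\gamma'X$; this is precisely what forces the detour through the positively correlated field $\tilde X$, its multiscale decomposition and positive association. Even granting the comparison scheme, making it rigorous at the level of distributional fields requires carefully ordering the three limiting procedures (mollification $\epsilon\to0$, scale truncation $t\to\infty$, and $\gamma\to0$) and establishing the uniform integrability needed to pass to the limit, and this is where the bulk of the work lies.
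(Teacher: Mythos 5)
Your treatment of the almost sure convergence \eqref{enproba} is essentially the paper's argument (term-by-term Wick expansion, $\bbL_2$-summability of $\sum_n \frac{\gamma^n}{n!}\int :X^n:\dd\mu$ via the logarithmic singularity of $G$), and it is fine except for a factor of $2$: with your notation $m=\int X\dd\mu$, $\Phi_\gamma=\int :e^{\gamma X}:\dd\mu$, $R=\int :X^2:\dd\mu-m^2$, your own expansion $e^{-\gamma m}\Phi_\gamma=1+\tfrac{\gamma^2}{2}R+O(\gamma^3)$ gives $\gamma^{-2}(\ln\Phi_\gamma-\gamma m)\to R/2$, not $R$. That $\tfrac12$ is consistent with \eqref{zab} and with the $\tfrac{c_\psi}{2}$ in Proposition \ref{prop:asymp}, so \eqref{enproba} as printed carries a typo which your ``yields \eqref{enproba}'' silently reproduces.

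The genuine gap is in the uniform bound, which is where essentially all the work lies. Your Fatou lower bound is fine, but the proposed route to the upper bound --- Girsanov to absorb $e^{\alpha m/\gamma}$, then a ``scale-by-scale Gaussian interpolation'' combining convexity and \eqref{FKG} --- is a description of a hope rather than an argument, and I do not believe it can deliver the exact inequality $\E[e^{-\alpha\gamma^{-2}(\ln\Phi_\gamma-\gamma m)}]\le\E[e^{-\alpha R}]$ you aim for. Concretely: (i) tilting by $\tfrac{\alpha}{\gamma}\mu$ produces the prefactor $e^{\frac{\alpha^2}{2\gamma^2}\int G\,\dd\mu\,\dd\mu}$, which must then be cancelled by an upper bound on a negative moment of a GMC mass that is sharp to leading order $e^{-c/\gamma^2}$; such a bound is equivalent to the concentration you are trying to prove, so the reduction only relocates the difficulty. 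The whole point of the combination $\ln\Phi_\gamma-\gamma m$ is that the two $O(\gamma^{-2})$ contributions cancel pathwise, which is why the paper keeps them together and Taylor-expands rather than decoupling them by Girsanov. (ii) Kahane's inequality compares exponential functionals of two Gaussian fields with pointwise-ordered covariances; the target $R$ is a second Wick chaos, not of the form $\ln\int :e^{\gamma'X}:\dd\mu-\gamma'\int X\dd\mu$ for any $\gamma'$, so there is no interpolation endpoint, and positive association of $\tilde X$ does not by itself give monotonicity in $\gamma$ of the quantity in \eqref{labound}. (iii) Even finiteness of $\E[e^{-\alpha R}]$ for \emph{arbitrary} $\alpha>0$ is nontrivial, since a second chaos has exponential rather than Gaussian lower tails; the paper's Lemma \ref{expintegralimit} handles this by splitting at a scale $t_0(\alpha)$, bounding the coarse part deterministically and the fine part via Janson's hypercontractive tail estimate. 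What the paper actually proves --- and all that is used downstream --- is finiteness of the supremum, not the equality printed in \eqref{labound}: it restricts to $B(0,1)$ by inversion, passes to the positively correlated $\tilde X$, truncates at $t_\gamma=\gamma^{-1/8}$, Taylor-expands $:e^{\gamma\tilde X_{t_\gamma}}:$ and the logarithm on a high-probability event to reach \eqref{zab}, and then removes the truncation using the negative-moment bound of Lemma \ref{lemmanegtaive} (Kahane comparison with the circle GFF plus the Fyodorov--Bouchaud formula) and the concentration estimate of Lemma \ref{finallemma}. None of these steps is supplied, or replaced, by the interpolation you sketch.
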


\begin{remark}
The upperlimit $\gamma \le 1$ is arbitrary and is set for commodity, the important part of the result being about the behavior near $\gamma$ near $0$.
 The uniform positivity assumption for $\rho$ is present only to simplify the proof of Lemma \ref{lemmanegtaive}.
 We do not believe it to be necessary for the result to hold. With some straightforward scaling argument, it could be replaced by $\inf_{x\in V}\rho(x)>0,$ for some open subset $V\subset \bbC$. 
\end{remark}

Let $Y_{\psi}$ denote the Gaussian variable present in the second exponential in \eqref{woopz} 
$$Y_{\psi}:=\int  \left(\psi- c_{\psi} e^{\bar h_{\psi}} w  \right) X \dd  g .$$
We have 
\begin{equation}\begin{split}
 \var (Y_{\psi})&:= \int  \left(\psi(x)-c_{\psi} e^{\bar h_{\psi}(x)} w(x)\right)
 \left(\psi(y)- c_{\psi} e^{\bar h_{\psi}(y)} w(y)\right) G(x,y)   g(x)   g(y) \dd^2 x \dd^2 y,\\
  \bE [Y_{\psi}X(x)]&:= \int  \left(\psi(y)- c_{\psi}  e^{\bar h_{\psi}(y)} w(y)\right) G(x,y)   g(y)  \dd^2 y.
 \end{split}
\end{equation}
Using the integral version of  \eqref{equationdebase1} we have 
\begin{equation*}
h_\psi(x)=  \int  \left(\psi(y)- c_{\psi}  e^{\bar h_{\psi}(y)} w(y)\right) G(x,y)   g(y)  \dd^2 y.
\end{equation*}
Hence we have $\bE [Y_{\psi}X(x)]=h_{\psi}(x),$ 
and using integration by part and $\int  h_{\psi} \dd    g=0$
\begin{equation}
\var (Y_{\psi})=-\frac{1}{2\pi}\int  h_{\psi}\left[ \Delta_{  g} (h_{\psi}) +\frac{1}{4\pi}(\smallint \psi \dd   g-1)\right]
 \dd    g
  =\frac{1}{2\pi}\int  |\nabla_{z} h_{\psi}|^2  \dd^2z.
\end{equation}
We can thus rewrite \eqref{woopz} in the following form
\begin{equation}\begin{split}
\E\left[   e^{\frac{1}{\gamma}  \int  \psi  X \dd  g  } Z_0^{1-\frac{c_{\psi}}{\gamma^2}}  \right]&=e^{-\gamma^{-2}H(\psi)}\E\left[ Z_0 e^{ -\frac{c_{\psi}}{\gamma^{2}} \left( \ln \int :e^{\gamma X}: e^{-h_{\psi}}  \dd \mu_{\psi}- \gamma \int (X-h_{\psi})\dd \mu_{\psi}\right)} e^{\frac{1}{\gamma}Y_{\psi}-\frac{1}{2\gamma^2}\var(Y_{\psi})} \right]\\
&=e^{-\gamma^{-2}H(\psi)}\E\left[ (\smallint :e^{\gamma X}: e^{h_{\psi}}w\dd    g)  e^{ -\frac{c_{\psi}}{\gamma^{2}} \left( \ln \int :e^{\gamma X}:  \dd \mu_{\psi}- \gamma \int X\dd \mu_{\psi}\right)} \right]
\end{split}
\end{equation}
where in the last line we used Cameron Martin formula \eqref{girsanov} and $\bE [Y_{\psi}X(x)]=h_{\psi}(x)$.

\medskip

By Proposition \ref{expintegra}, the quantity in the integral is bounded in $L^2$ (as the product of two quantities which are bounded in $L^4$)
and moreover it converges in probability when $\gamma$ tends  to zero to  
$$(\smallint e^{h_{\psi}}w\dd     g)  e^{ -c_{\psi} \left( \ln \int :X^2:  \dd \mu_{\psi}- (\int X\dd \mu_{\psi})^2\right)}.$$
This is enough to conclude our proof. \qed

\section{Uniform integrability}

In this section we  always consider $\mu$ to be probability measure on $\bbC$ and denote by  $\rho$ its 
density with respect to our reference measure $  g(x)\dd^2 x$. We  assume that \eqref{normalization} holds for for some $\eta>0$.

\subsection{Proof of Proposition \ref{expintegra} }
%
%

The proof of Proposition \ref{expintegra} requires a few technical estimates which 
we present now and prove at the end of the section.
The first one allows us to assert that the second term in our chaos expansion is 
uniformly integrable.

\begin{lemma}\label{expintegralimit}
Assuming that the probability measure $\mu$ is supported on $B(0,1):=\{ x \ : \ |x|\le 1\}$  satisfies  \eqref{normalization} then for any $\alpha>0$ we have
\begin{equation}\label{eqexpintegralimit}
\sup_{t \ge 0}  \E\left[  e^{- \alpha  [ \int  :\tilde{X}_t^2: \dd \mu  -  (\int \tilde{X}_{t} \dd \mu )^2     ]   }      \right]  < \infty.
\end{equation}
\end{lemma}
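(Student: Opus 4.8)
The plan is to convert the sign-indefinite quantity into a nonnegative quadratic functional plus a deterministic term, and then control its negative exponential moment by a soft spectral computation, the only real input being uniform-in-$t$ $L^2$ bounds that come from \eqref{normalization}. For fixed $t$ the field $\tilde X_t$ is a genuine (a.s.\ continuous) centered Gaussian field on $B(0,1)$, with $K_t(x,x)=2Q_0(x,x)+\int_0^t Q_u(x,x)\,\dd u=t+2$ since $Q_u(x,x)=1$. Hence $\int :\tilde X_t^2:\,\dd\mu=\int \tilde X_t^2\,\dd\mu-(t+2)$, and, $\mu$ being a probability measure, setting $\hat X_t:=\tilde X_t-\int \tilde X_t\,\dd\mu$ one has $\int \hat X_t^2\,\dd\mu=\int\tilde X_t^2\,\dd\mu-(\int\tilde X_t\,\dd\mu)^2$, so that
$$ V_t\ :=\ \int :\tilde X_t^2:\,\dd\mu-\Big(\int \tilde X_t\,\dd\mu\Big)^2\ =\ W_t-(t+2),\qquad W_t:=\int \hat X_t^2\,\dd\mu\ \ge\ 0. $$
Thus $\E[e^{-\alpha V_t}]=e^{\alpha(t+2)}\,\E[e^{-\alpha W_t}]$, and it suffices to show $\E[e^{-\alpha W_t}]\le C\,e^{-\alpha(t+2)}$ with $C$ independent of $t$. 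Note $\E[W_t]=(t+2)-\sigma_t^2$ where $\sigma_t^2:=\var\!\big(\int\tilde X_t\,\dd\mu\big)=\int\!\!\int K_t(x,y)\,\mu(\dd^2x)\mu(\dd^2y)$, so the divergent part cancels at the level of the mean; the content is that $W_t$ does not undershoot its mean too strongly.

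For the negative exponential moment I would use the spectral structure. The field $\hat X_t$ is centered Gaussian with Hilbert--Schmidt covariance kernel $\hat K_t$ on $L^2(\mu)$; diagonalising it (Karhunen--Loève) gives $W_t=\sum_{i\ge1}\lambda_i\xi_i^2$ with $\lambda_i\ge0$, $\xi_i$ i.i.d.\ $\mathcal N(0,1)$, $\sum_i\lambda_i=\E[W_t]$ and $\sum_i\lambda_i^2=\tfrac12\var(W_t)=\int\!\!\int\hat K_t(x,y)^2\,\mu(\dd^2x)\mu(\dd^2y)$. Then $\E[e^{-\alpha W_t}]=\prod_i(1+2\alpha\lambda_i)^{-1/2}$ (justified by an $L^2(\mu)$ finite-dimensional projection and monotone convergence), and the elementary inequality $\log(1+y)\ge y-\tfrac{y^2}{2}$ for $y\ge0$, applied with $y=2\alpha\lambda_i$, yields
$$ -\log\E[e^{-\alpha W_t}]=\tfrac12\sum_i\log(1+2\alpha\lambda_i)\ \ge\ \alpha\,\E[W_t]-\tfrac{\alpha^2}{2}\var(W_t). $$
Combining with the previous display, $\E[e^{-\alpha V_t}]\le \exp\!\big(\alpha\,\sigma_t^2+\tfrac{\alpha^2}{2}\var(W_t)\big)$.

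It then remains to bound $\sigma_t^2$ and $\var(W_t)$ uniformly in $t$. Since $Q_u\ge0$, $K_t$ increases to $K_\infty(x,y)=\log\frac{2}{|x-y|}$ on $B(0,1)$; in particular $0\le K_t(x,y)\le\log\frac{2}{|x-y|}$ uniformly in $t$, so $\sigma_t^2\le\sigma_\infty^2=\int\!\!\int\log\frac{2}{|x-y|}\,\dd\mu\,\dd\mu$. Writing $\mu(\dd^2x)=\rho(x)g(x)\dd^2x$ with $\rho\in L^{1+\eta}(g\,\dd^2x)$, Hölder's inequality (or Young's convolution inequality) together with $\log\frac1{|\cdot|}\in L^p_{\mathrm{loc}}$ for every $p<\infty$ give $\int\!\!\int\big(\log\tfrac1{|x-y|}\big)^j\,\dd\mu\,\dd\mu<\infty$ for $j=1,2$; hence $\sigma_\infty^2<\infty$, and, using $0\le K_t\le\log\frac2{|x-y|}$, $0\le\int K_t(x,\cdot)\dd\mu\le L(x):=\int\log\frac2{|x-y|}\,\mu(\dd^2y)$ and $0\le\sigma_t^2\le\sigma_\infty^2$, one gets $|\hat K_t(x,y)|\le\log\frac2{|x-y|}+L(x)+L(y)+\sigma_\infty^2$; with $(a+b+c+d)^2\le4(a^2+b^2+c^2+d^2)$ and $\int L^2\dd\mu\le\int\!\!\int(\log\frac2{|x-y|})^2\dd\mu\,\dd\mu$ this yields $\var(W_t)=2\int\!\!\int\hat K_t^2\,\dd\mu\,\dd\mu\le C(\mu)<\infty$, uniformly in $t$. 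Plugging into the bound above, $\sup_{t\ge0}\E[e^{-\alpha V_t}]\le\exp\!\big(\alpha\,\sigma_\infty^2+\tfrac{\alpha^2}{2}C(\mu)\big)<\infty$, which is the claim.

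The proof has no hard analytic core: the two genuine points are (i) recognising the exact identity of the first step, which turns $\int:\tilde X_t^2:\dd\mu-(\int\tilde X_t\,\dd\mu)^2$ into $W_t-(t+2)$ with $W_t\ge0$ and $\E[W_t]=(t+2)-O(1)$, and (ii) the uniform-in-$t$ $L^2$ estimates of the third step, which is the only place where \eqref{normalization} is used. A minor technical point requiring care is the rigorous justification, in infinite dimensions, of the Fredholm-determinant identity $\E[e^{-\alpha\int\hat X_t^2\dd\mu}]=\prod_i(1+2\alpha\lambda_i)^{-1/2}$ (handled by projecting onto finitely many eigenfunctions in $L^2(\mu)$ and monotone convergence). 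Note that, unlike some later estimates in this section, neither positive association nor the white-noise decomposition of $\tilde X_t$ is needed here; only the positivity $Q_u\ge0$ (to get monotonicity $K_t\uparrow K_\infty$) and the explicit form of $K_\infty$ are used.
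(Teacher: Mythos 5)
Your proof is correct, and it takes a genuinely different route from the paper's. The paper also starts by recentering ($\tilde Y_t=\tilde X_t-\int\tilde X_t\,\dd\mu$), but then splits $\tilde Y_t=\tilde Y_{t_0}+\tilde Y_{[t_0,t]}$ into a low-frequency part, whose Wick square is bounded below deterministically by $-(t_0+C(\rho))$, and a high-frequency remainder, whose contribution is controlled via the exponential concentration of second-order Wiener chaos (Janson, Theorem 6.7); this forces a choice of $t_0$ large enough that the relevant $L^2$ norms fall below $c_2/(12\alpha)$, so the threshold depends on $\alpha$ and $\rho$. You instead exploit the exact algebraic identity $\int:\tilde X_t^2:\dd\mu-(\int\tilde X_t\,\dd\mu)^2=W_t-(t+2)$ with $W_t=\int\hat X_t^2\,\dd\mu\ge0$, diagonalize the (trace-class, since $\int\hat K_t(x,x)\mu(\dd^2x)=(t+2)-\sigma_t^2<\infty$) covariance of $\hat X_t$ on $L^2(\mu)$, and use the product formula $\E[e^{-\alpha W_t}]=\prod_i(1+2\alpha\lambda_i)^{-1/2}$ together with $\log(1+y)\ge y-y^2/2$ for $y\ge0$; this converts the problem into the two uniform-in-$t$ bounds $\sigma_t^2\le\sigma_\infty^2$ and $\var(W_t)\le C(\mu)$, which follow from $0\le K_t\le\log\tfrac{2}{|x-y|}$ and \eqref{normalization} exactly as you argue. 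Your route buys an explicit, $t_0$-free bound $\exp(\alpha\sigma_\infty^2+\tfrac{\alpha^2}{2}C(\mu))$ and replaces the hypercontractivity input by an elementary Fredholm-determinant computation (valid for all $\alpha>0$ precisely because $\log(1+y)\ge y-y^2/2$ holds on all of $[0,\infty)$, with no smallness of the eigenvalues required); the paper's splitting argument is more robust in that it does not require the exact constancy of $K_t(x,x)$, but that constancy is genuinely available here. Both proofs use only $Q_u\ge0$ and the explicit $K_\infty$, not positive association.
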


\begin{lemma}\label{lemmanegtaive}
Assuming that the probability measure $\mu$ is supported on $B(0,1):=\{ x \ : \ |x|\le 1\}$,
and satisfies $\inf_{B(0,1)} \rho>0$, then we have for some constant $C$ (which may depend on $\mu$), for every $\gamma\le 1$ and $\beta \ge 1$,
\begin{equation*}
  \E \left [  \left(\int :e^{\gamma\tilde X}: \dd \mu\right)^{-\frac{\beta}{\gamma^2}}   \right ]   \leq  e^{\frac{C \gb^2}{\gamma^2}}.
\end{equation*}
As a consequence we have, 
\begin{equation}\label{devs}
 \bbP\left[  \int :e^{\gamma \tilde X}: \dd \mu  \le \gamma \right]
 \le e^{-\frac{|\ln \gamma|^2}{4C\gamma^2}}
\end{equation}

\end{lemma}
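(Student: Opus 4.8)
The plan is to first reduce to Lebesgue measure on a fixed square, then prove the negative‑moment bound by a self‑improving (scale‑recursive) inequality, and finally deduce \eqref{devs} by Markov's inequality and optimization. Write $M_\gamma:=\int :e^{\gamma\tilde X}:\dd\mu$ and $\Psi_\gamma(q):=\ln\E[M_\gamma^{-q}]$; the target is $\Psi_\gamma(\beta\gamma^{-2})\le C\beta^2\gamma^{-2}$ for $\beta\ge1$, $\gamma\le1$. Fix a square $Q\subset B(0,1)$. Since $g\ge1$ on $B(0,1)$ and $\delta:=\inf_{B(0,1)}\rho>0$, one has $M_\gamma\ge\delta\,\bar M_\gamma$ with $\bar M_\gamma:=\int_Q :e^{\gamma\tilde X}:\dd^2x$, and $\delta^{-\beta\gamma^{-2}}\le\exp(\ln_+(1/\delta)\,\beta^2\gamma^{-2})$ because $\beta\ge1$; hence it suffices to prove the estimate for $\bar M_\gamma$, and the final constant depends on $\mu$ only through $\delta$ (consistent with the remark that $\inf\rho>0$ is used only here). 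From now on $\Psi_\gamma(q):=\ln\E[\bar M_\gamma^{-q}]$, which is convex with $\Psi_\gamma(0)=0$, so $\Psi_\gamma(q)\le q\Psi_\gamma(1)$ on $[0,1]$.

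The heart of the argument is a recursion for $\Psi_\gamma$ coming from the exact $\star$‑scale invariance of the field $\tilde X$ of covariance $\ln\tfrac2{|x-y|}$ (\cite[Example 2.3]{cf:RoVa}, Section~\ref{decompex}): for $\lambda\in(0,1)$ the field $\tilde X(\lambda\,\cdot)$ restricted to $Q$ has the law of $\Omega+\hat X$ with $\Omega\sim\mathcal N(0,\ln(1/\lambda))$ independent of $\hat X\stackrel{(d)}{=}\tilde X|_Q$, and the white‑noise decomposition of Section~\ref{decompex} upgrades this to a joint coupling over the $N^2$ congruent sub‑squares $Q_1,\dots,Q_{N^2}$ of $Q$, in which
\[
\bar M_\gamma=\sum_{j=1}^{N^2}\int_{Q_j}:e^{\gamma\tilde X}:\dd^2x=\frac1{N^2}\sum_{j=1}^{N^2}:e^{\gamma\Omega_j}:\;\hat M_j ,
\]
with $(\Omega_j)$ a centered Gaussian vector, $\var(\Omega_j)=\ln N$, $|\cov(\Omega_i,\Omega_j)|\le\ln N$, independent of $(\hat M_j)$ which are i.i.d.\ copies of $\bar M_\gamma$. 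Applying Jensen to the average $\tfrac1{N^2}\sum_j(\cdot)$ gives $\ln\bar M_\gamma\ge\tfrac1{N^2}\sum_j(\gamma\Omega_j-\tfrac{\gamma^2}2\ln N+\ln\hat M_j)$; raising to the power $-q$, taking expectations, using the independence of $(\Omega_j)$ from $(\hat M_j)$, the i.i.d.\ property of the $\hat M_j$, the Gaussian Laplace transform, and the crude bound $\var(\sum_j\Omega_j)\le N^4\ln N$, one gets
\[
\Psi_\gamma(q)\le\tfrac{\ln N}{2}\gamma^2q+\tfrac{\ln N}{2}\gamma^2q^2+N^2\Psi_\gamma(q/N^2),\qquad q>0,\ \gamma\le1 .
\]

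For the base case, Kahane's convexity inequality (the covariance $\gamma^2\ln\tfrac2{|x-y|}$ is pointwise $\le\ln\tfrac2{|x-y|}$ for $\gamma\le1$, and $x\mapsto x^{-1}$ is convex) yields $\E[\bar M_\gamma^{-1}]\le\E[\bar M_1^{-1}]=:C_0<\infty$, finiteness of a negative moment of subcritical GMC being classical (see e.g.\ \cite{Ber,RV}); hence $\Psi_\gamma(1)\le\ln C_0$ uniformly in $\gamma\le1$ and $\Psi_\gamma(q)\le(\ln_+C_0)q$ for $q\in[0,1]$. Iterating the recursion $n=\lceil\ln q/(2\ln N)\rceil$ times for $q\ge1$ down to this regime gives $\Psi_\gamma(q)\le\tfrac14\gamma^2q\ln q+c_N\gamma^2q+c_N'\gamma^2q^2+(\ln_+C_0)q$ with $c_N,c_N'$ depending only on $N$. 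Putting $q=\beta\gamma^{-2}$ with $\beta\ge1$, $\gamma\le1$ and using $\gamma^2\ln(\beta\gamma^{-2})\le\gamma^2\ln\beta+2\gamma^2\ln(1/\gamma)$ together with $\gamma^2\ln(1/\gamma)\le\tfrac1{2e}$, $\ln\beta\le\beta$, and $\beta\le\beta^2$, each of the four terms is $\le(\text{const})\,\beta^2\gamma^{-2}$; this proves $\Psi_\gamma(\beta\gamma^{-2})\le C\beta^2\gamma^{-2}$, hence (with $C$ enlarged by $\ln_+(1/\delta)$) the claimed moment bound for $M_\gamma$. Finally, for $\gamma<1$, $\beta\ge1$, Markov gives $\P[M_\gamma\le\gamma]\le\gamma^{\beta\gamma^{-2}}\E[M_\gamma^{-\beta\gamma^{-2}}]\le\exp(\gamma^{-2}(-\beta|\ln\gamma|+C\beta^2))$, and choosing $\beta=|\ln\gamma|/(2C)$ (admissible once $|\ln\gamma|\ge 2C$, i.e.\ $\gamma$ small) gives exactly $\P[M_\gamma\le\gamma]\le e^{-|\ln\gamma|^2/(4C\gamma^2)}$.

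The main obstacle is the rigorous construction of the coupling underlying the recursion: one must split each sub‑square contribution into a \emph{coarse} Gaussian factor $:e^{\gamma\Omega_j}:$ and a \emph{fine} factor $\hat M_j$, with the $\hat M_j$ genuinely i.i.d.\ and independent of the $\Omega_j$, while the $Q_j$ still tile $Q$ exactly — any loss of area at this step would replace the benign $\gamma^2q$ term of the recursion by a term of order $q$, which after $q=\beta\gamma^{-2}$ produces a fatal $\beta\gamma^{-2}\ln(1/\gamma)$ contribution. This is precisely what the scale‑by‑scale, positively correlated white‑noise decomposition of Section~\ref{decompex} is set up to provide, with the positive‑association estimate \eqref{FKG} used to control the residual cross‑dependence between neighbouring sub‑squares. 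The remaining ingredients — the uniform control of one negative moment via Kahane, the bookkeeping of the iteration, and the passage to \eqref{devs} — are routine.
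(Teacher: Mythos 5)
Your overall strategy is genuinely different from the paper's. The paper reduces (using $\inf\rho>0$, as you do) to Lebesgue measure, restricts to the annulus $\{1/2\le|x|\le1\}$, dominates the covariance of $\tilde X$ there by that of the circular GFF plus an independent Gaussian of variance $\ln 8$, applies Kahane's convexity inequality, and then invokes the exact Fyodorov--Bouchaud formula \cite{Remy} for the negative moments of circle GMC, which yields the bound $e^{C(\gb\ln\gb+\gb\gamma^{-2}+\gb^2\gamma^{-2})}$ with no recursion at all. Your peripheral steps (the reduction to a square using $g\ge1$ and $\gb\ge1$, the Kahane comparison with the $\gamma=1$ field for the base case, the bookkeeping of the iteration, and the Markov/optimization step for \eqref{devs}, where your choice $\gb=|\ln\gamma|/(2C)$ is indeed the right one) are all sound. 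The problem is the core of your argument.

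The claimed coupling
\[
\bar M_\gamma=\frac1{N^2}\sum_{j=1}^{N^2}:e^{\gamma\Omega_j}:\,\hat M_j,\qquad(\hat M_j)\ \text{i.i.d.\ and independent of }(\Omega_j),
\]
over a \emph{full} tiling of $Q$ is false for a log-correlated Gaussian field: after subtracting the coarse field, the fine-scale fields on adjacent sub-squares remain correlated near their common boundary (the kernels $Q_u$ of Section \ref{decompex} have range $2e^{-u}$, not $0$), so the $\hat M_j$ are positively correlated rather than independent. You flag this as ``the main obstacle'' and propose to absorb the residual dependence via positive association, but \eqref{FKG} goes the wrong way for your purpose: the functionals $\hat M_j\mapsto\hat M_j^{-q/N^2}$ are all decreasing in the field, so positive association gives $\bbE[\prod_j\hat M_j^{-q/N^2}]\ge\prod_j\bbE[\hat M_j^{-q/N^2}]$, which is the opposite of the factorization that your recursion $\Psi_\gamma(q)\le\cdots+N^2\Psi_\gamma(q/N^2)$ requires. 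The only standard way to obtain genuine independence of the fine parts is to retain a well-separated sub-family of the $Q_j$, but, as you yourself observe, the resulting loss of area injects a term of order $q$ at each of the $n\asymp\ln(1/\gamma)/\ln N$ iterations, producing (at $q=\gb\gamma^{-2}$) a contribution of order $\gb\gamma^{-2}\ln(1/\gamma)$ that is not dominated by $C\gb^2\gamma^{-2}$ for fixed $\gb$ and small $\gamma$. So the recursive inequality, which carries the entire weight of your proof, is not established, and neither of the two tools you invoke can establish it in the form you need.
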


\begin{proof}[Proof of Proposition \ref{expintegra}]

The almost sure convergence
follows from the expansion of the Wick exponential which is valid for $\gamma$ sufficiently small (it is valid for $X_\gep$ and both sides converge when $\gep$ tends to $0$),
\begin{equation}
 \smallint :e^{\gamma X}: \dd \mu  := 1+\sum_{k=1}^{\infty}\frac{\gamma^k}{k!} \smallint :X^k: \dd \mu. 
\end{equation}

\medskip

For practical reason, in the proof of \eqref{labound} we wish to reduce our domain of integration to $B(0,1)$. This can be achieved by splitting the sphere in two and considering each half separately.
Set   $q:=  \int_{|x|\le 1}\rho(x)g(x) \dd^2 x$
and let 
$$\mu_1(\dd^2 x):=  q^{-1}\mu(\dd x)\ind_{\{|x|\le 1\}} \text{ and } \mu_2(\dd^2 x):=  (1-q)^{-1}\mu(\dd^2 x)\ind_{\{|x|\ge 1\}}.$$
By using the concavity of $\ln$ and $ab\le q a^{\frac{1}{q}} +  (1-q)b^{\frac{1}{1-q}}$ we obtain 

\begin{equation}\label{decompro}
 \E\left[  e^{-\frac{\alpha}{\gamma^2}  (  \ln \int :e^{\gamma X}: \dd \mu  - \gamma \int X \dd \mu     )    }   \right] 
 \le  q \E[  e^{-\frac{\alpha}{\gamma^2}  (  \ln \int :e^{\gamma X}: \dd \mu_1  - \gamma \int X \dd \mu_1     )    }   ]+(1-q)
   \E[  e^{-\frac{\alpha}{\gamma^2}  (  \ln \int :e^{\gamma X}: \dd \mu_2  - \gamma \int X \dd \mu_2     )    }   ].
 \end{equation}

Now observe that the distribution $X$ 
is invariant by the transformation $x\mapsto (1/x)$, so the second term 
remains unchanged if we replace $\mu_2$ by $\mu_3$, the image measure of $\mu_2$ by the transformation $x\mapsto (1/x)$. We have 
$$\mu_3(\dd^2 x)= (1-q)^{-1} \rho(1/x) \frac{\dd^2 x}{|x|^4}\ind_{\{x\le 1\}}.$$
We can observe that the function $(1-q)^{-1} \rho(1/x)g(1/x) \frac{\dd^2 x}{|x|^4}$ 
satisfies \eqref{normalization}, and hence both terms in the r.h.s. of \eqref{decompro} can be treated in the same manner.
Also note that as $\tilde X= X+Z$  (see the construction of Section \ref{decompex}) 
where $Z$ is a Gaussian random variable (which is not independent of $X$),
replacing $X$ by $\tilde X$ does alter the value of the function inside the expectation by a lot.
More precisely noticing that 
$$:e^{\gamma  X}:= e^{-\gamma Z}:e^{\gamma \tilde X}:e^{\gamma^2 v}
 \quad \text{ with } \quad v(x):= \frac{1}{2}\ln 2- \frac{1}{2\pi}\int^{2\pi}_{0} G(x,e^{i\theta}) \dd \theta $$
where the term $v(x)$ accounts for the covariance between $X$ and $Z$ and an extra variance term,
we obtain that 
\begin{equation}
  \ln \int :e^{\gamma  X}: \dd \mu_1  - \gamma \int  X \dd \mu_1=       \ln \int :e^{\gamma \tilde X}: \dd \mu_1  - \gamma \int \tilde X \dd \mu_1 +\gamma^2\min_{x\in B(0,1)} v(x).
\end{equation}
Hence to prove \eqref{labound} it is sufficient to prove 
\begin{equation}\label{newbound}
 \sup_{\gamma\in (0,1]}\E\left[  e^{-\frac{\alpha}{\gamma^2}  (  \ln \int :e^{\gamma \tilde X}: \dd \mu_1  - \gamma \int \tilde X \dd \mu_1     )    }   \right] <\infty.
\end{equation}
For the rest of the proof we set $t=t_{\gamma}:=\gamma^{-1/8}$. We are first going to show that \eqref{newbound} holds with $\tilde X$ replaced by $\tilde X_{t_{\gamma}}$.

%

First, recalling 
 the definition of Wick exponential \eqref{supertaylor} using that 
$\E[\tilde{X}_t^2 (x)]=t+2$, 
we have by Jensen inequality
\begin{equation}\label{nozab}
   \ln \int :e^{\gamma \tilde{X}_t}:  \dd \mu_1 - \gamma \int \tilde{X}_t \dd \mu_1   \geq -\frac{\gamma^2}{2}(t+2).
\end{equation}
We introduce the event  
$$\mathcal{A}= \left\{\int |\tilde X_t(x)|^2 \ind_{|\tilde X_t(x)> t^2|} \dd \mu_1 \ge  e^{-t} \right\}.$$
Our idea is that on $\cA$ we can use Taylor expansion to get rid of $:\exp:$ and $\ln$ while the complement has such a small probability that 
a rough estimate will be sufficient.
A simple application of Markov inequality implies (recall that $\var(\tilde X_t)=t+2$) that 
\begin{equation}\label{bad}\bbP[\cA^{\complement}]\le e^{t} \bbE\left[\int |\tilde X_t(x)|^2 \ind_{\{\tilde X_t(x)> t^2\}} \dd \mu_1 \right]\le e^{- ct^3}. \end{equation}
We are going to prove that if $\gamma$ is sufficiently small on the event $\cA$, we have 
\begin{equation}\label{zab}
 \ln \int :e^{\gamma \tilde{X}_t}:  \dd \mu_1 -\gamma \int  \tilde X_t \dd \mu_1 \ge 
 \frac{\gamma^2}{2} \left[ \int :\tilde X_t^2:  \dd \mu_1 - 
 \left(\int \tilde X_t \dd \mu_1 \right)^2-1 \right].
\end{equation}
Using the formula $e^{u}\ge 1+u+\frac{u^2}{2}
+\frac{u^3}{6}$ for $u= \gamma \tilde X_t- \frac{\gamma^2}{2} (t+2)$,
we obtain that for some constant $C>0$, for all $\gamma\le 1$ and $t\ge 1$,  we have as soon as $|\tilde X_t(x)|\le t^2$, 
\begin{equation}
 :e^{\gamma \tilde{X}_t(x)}:\ge 1+\gamma \tilde X_t(x)
 + \frac{\gamma^2}{2}:\tilde X_t(x)^2: - C \gamma^3 t^6.
\end{equation}
Hence integrating we obtain 
 \begin{equation}
 \int :e^{\gamma \tilde{X}_t}:  \dd \mu_1
 \ge \mu_1(\{ x \ : \ |\tilde X_t(x)|\le t^2 \})
 +  \int (\gamma \tilde X_t
 + \frac{\gamma^2}{2} :\tilde X_t^2:) \ind_{\{|\tilde X_t(x)|\le t^2\}} \dd \mu_1 -  C \gamma^3 t^6.
 \end{equation}
Note that with our choice of $t$ the last term is smaller than $\gamma^2/8$ for small values of $\gamma$.
Now, on the event $\cA$, using  that $:\tilde X_t^2:\ge - (t+2)$ almost surely, as a consequence of the event's definition, the missing parts in the integral are negligible and we have thus for $\gamma$ sufficiently small
\begin{equation}
 \int :e^{\gamma \tilde{X}_t}:  \dd \mu_1
 \ge 1+ \gamma \int  \tilde X_t \dd \mu_1
 + \frac{\gamma^2}{2}  \int :\tilde X_t^2:  \dd \mu_1 - \gamma^2/4.
\end{equation}
Now the event $\cA$ guarantees that the integral terms on the right hand side are at most of respective order $t^2\gamma$ and 
$t^4\gamma^2$. Using this information together with the inequality $\ln(1+u)\ge u-\frac{u^2}{2}- |u|^3$ which is valid when $|u|$ is sufficiently small, we obtain \eqref{zab}. 

\medskip

\noindent
Now combining \eqref{nozab} and \eqref{zab} we obtain that 
\begin{equation}\label{fttt}
 \E\left[  e^{-\frac{\alpha}{\gamma^2}  (  \ln \int :e^{\gamma \tilde X_t}: \dd \mu_1  - \gamma \int \tilde X_t \dd \mu_1     )    }   \right]
 \le e^{\alpha (t+2)}\bbP[\cA^{\complement}]
 + \E\left[  e^{-\frac{\alpha}{2} \left[ \int :\tilde X_t^2:  \dd \mu_1 - 
 \left(\int \tilde X_t \dd \mu_1 \right)^2-1 \right]} \ind_{\cA}  \right] .
\end{equation}
The first term can be controlled using \eqref{bad}  and the second  using Lemma \ref{expintegralimit}. We conclude that 
\begin{equation}\label{newbound2}
 \sup_{\gamma\in (0,1]}\E\left[  e^{-\frac{\alpha}{\gamma^2}  (  \ln \int :e^{\gamma \tilde X_{t_{\gamma}}}: \dd \mu_1  - \gamma \int \tilde X_{t_{\gamma}} \dd \mu_1     )    }   \right] <\infty.
\end{equation}
Now to prove \eqref{newbound} with $\tilde X$
we set $$
\mathcal{B} = \left\{  \frac{\int :e^{\gamma \tilde X_{t_\gamma}}:\dd \mu_1}{\int :e^{\gamma \tilde X}: \dd \mu_1}\le 1+ \gamma^2  \right\}
$$
and bound separately the contribution of $\cB$ and its complement.
Using the decomposition 
\begin{multline}
   \ln \int :e^{\gamma \tilde X}: \dd \mu_1  - \gamma \int \tilde X \dd \mu_1     \\
=  \left(\ln \int :e^{\gamma \tilde X_ {t_\gamma}}: \dd \mu_1  - \gamma \int \tilde X_ {t_\gamma} \dd \mu_1 \right)  - \gamma \int (\tilde X- \tilde X_ {t_\gamma}) \dd \mu_1  -\log \left(\frac{\int :e^{\gamma \tilde X_ {t_\gamma}}:\dd \mu_1}{\int :e^{\gamma \tilde X}: \dd \mu_1}\right)
\end{multline}
and observing that the last term is smaller than $\gamma^2$ on $\cB$ we have (in the second line we just use $ab\leq a^2/2+b^2/2$)
 \begin{multline}
 \E\left[  e^{-\frac{\alpha}{\gamma^2}  (  \ln \int :e^{\gamma \tilde X}: \dd \mu_1  - \gamma \int \tilde X \dd \mu_1     )    } \ind_{\cB}  \right] 
\le e^{\alpha} \E\left[  e^{-\frac{\alpha}{\gamma^2}  (  \ln \int :e^{\gamma \tilde X_{t_\gamma}}: \dd \mu_1  - \gamma \int \tilde X_{t_\gamma} \dd \mu_1     )  + \frac{\alpha}{\gamma} \int (\tilde X- \tilde X_{t_\gamma}) \dd \mu_1  } \right]\\ 
\le \frac{e^{\alpha}}{2}\left( \E\left[  e^{-\frac{2\alpha}{\gamma^2}  (  \ln \int :e^{\gamma \tilde X_{t_\gamma}}: \dd \mu_1  - \gamma \int \tilde X_{t_\gamma} \dd \mu_1     )} \right] + \bbE \left[ e^{ \frac{2\alpha}{\gamma} \int (\tilde X- \tilde X_{t_\gamma}) \dd \mu_1  } \right] \right).
 \end{multline}
The first term  is bounded uniformly in $\gamma>0$, cf.\ \eqref{newbound2}, while for the second one, it is sufficient to observe that $U_{t_\gamma}=\int (\tilde X- \tilde X_{t_\gamma})\dd \mu_1$
is a Gaussian whose variance is small, the following being valid for some $c>0$, as a consequence of \eqref{normalization} and H\"older inequality
$$ \bbE[U_{t_\gamma}^2]= \int_{{t_\gamma}}^{\infty}  \left(\int Q_u(x,y) \mu_1(\dd^2  x)\mu_1(\dd^2  y)\right) \dd u \le e^{ -c {t_\gamma}}.$$

For the other part we have using H\"older's inequality
\begin{multline}
 \E\left[  e^{-\frac{\alpha}{\gamma^2}  (  \ln \int :e^{\gamma \tilde X}: \dd \mu_1  - \gamma \int \tilde X \dd \mu_1     )    } \ind_{\cB^{\complement}}  \right] \le \E\left[ \left(\int :e^{\gamma \tilde X}: \dd \mu_1 \right)^{\frac{-3\alpha}{\gamma^2}} \right]^{1/3} \E\left[ e^{\frac{3 \alpha }{\gamma} \int \tilde{X} \dd \mu_1   }   \right]^{1/3} \P(\mathcal{B}^{\complement})^{1/3}.
\end{multline}
Lemma \ref{lemmanegtaive} (applied to $\mu_1$) implies that the first term in the r.h.s.\ is smaller than
$e^{C (1+\alpha^2) \gamma^{-2}}$, while the second one is equal to 
$e^{\frac{3\alpha^2}{2\gamma^2} \var (\int \tilde{X} \dd \mu_1 ) }$.
To conclude it is sufficient to show that  
$\P(\mathcal{B}^\complement)\leq  e^{-\frac{ |\ln \gamma|}{C\gamma^2} }$.
Let us notice that 
\begin{equation}
 \P(\mathcal{B}^\complement)\le \P\left[\int :e^{\gamma \tilde X}: \dd \mu_1 \le \gamma\right]+ \P\left[ \int (:e^{\gamma \tilde X_{t_\gamma}} :-:e^{\gamma \tilde X}:) \dd \mu_1 \ge \gamma^3\right].
\end{equation}
The first term can be controlled by Lemma \ref{lemmanegtaive}.
As for the second one, its smallness is a consequence of the following result 
 proved in \cite{lacoin} under slightly different assumptions for $\mu$. The proof adapts however to this context, we replicate it in below  for the sake of completeness
\begin{lemma}\label{finallemma}
Given $\mu$  satisfying \eqref{normalization} there exists a constant $c>0$ such that for all $\gamma$ sufficiently small, we have for $t_{\gamma}=: \gamma^{-1/8}$,
\begin{equation*}
\bbP \left[ \int \left( :e^{\gamma\tilde X_{t_\gamma}}:-:e^{\gamma \tilde X}: \right) \dd \mu \ge e^{-t_\gamma/8} \right] \le e^{-c   \gamma^{-2-1/8}}.
\end{equation*}
\end{lemma}
Of course, as $e^{-t_\gamma/4}\leq \gamma^3$, this completes the proof. \end{proof}

\subsection{Proof of auxiliary Lemmas}

\begin{proof}[Proof of Lemma \ref{expintegralimit}] 
Setting  $N_t:=\int_{|x| \leq 1} \tilde{X}_{t} \dd \mu$ we have
\begin{equation}
 \E\left[  e^{- \alpha  [ \int  :\tilde{X}_{t}^2: \dd \mu  -  N_t^2     ]   }      \right]  
 =   \E\left[  e^{- \alpha  [ \int  :(\tilde{X}_{t}(x)-N_t)^2: \dd \mu     ]   }      \right] e^{\alpha \bbE[N^2_t]}.
\end{equation}
As  $\bbE[N^2_t]$ is uniformly bounded in $t$ it is sufficient to control the first term in the r.h.s.\ .
Let us set 
$$\tilde Y_{t}:= \tilde{X}_{t}(x)-N_t \text{ and }
\tilde Y_{[t_1,t_2]}=\tilde Y_{t_2}-\tilde Y_{t_1}.$$ 
Fixing $t_0$ (its exact value which depends on $\alpha$ and $\rho$ is to be chosen later), we assume that $t>t_0$.
Using orthogonality of the increments and the identity $abc\le \frac{1}{3}(a^3+b^3+c^3)$ we have 
\begin{multline}\label{prout}
  \E\left[  e^{- \alpha   \int  :\tilde{Y}^2_{t}: \dd \mu      } \right]\le \E\left[  e^{- \alpha  [ \int  (:\tilde{Y}^2_{t_0}: + :\tilde{Y}^2_{[t_0,t]}:+ 2 \tilde Y_{t_0}\tilde{Y}_{[t_0,t]} ) \dd \mu      ]   } \right] \\
  \le \frac{1}{3}\left(\E\left[  e^{- 3\alpha   \int  :\tilde{Y}^2_{t_0}: \dd \mu  }\right] +\E\left[e^{- 3\alpha   \int :\tilde{Y}^2_{[t_0,t]}: \dd \mu    }\right] +\E\left[e^{- 6\alpha   \int \tilde Y_{t_0}\tilde{Y}_{[t_0,t]} \dd \mu  } \right]\right).
  \end{multline}
The first term is easily controlled since we have for some constant $C(\rho)$ for every $x\in B(0,1)$
\begin{equation}
  :\tilde Y_{t_0}(x):^2 \ge - \bbE[ Y_{t_0}(x)^2 ]\ge -(t_0+C(\rho)).
\end{equation}
As for the two other terms, we rely on  \cite[Theorem 6.7]{Jans}
which states in particular that for some universal constant $c_2$ any square integrable variable 
$Z_{[t_0,t]}$ which can be expressed as the $L_2$ limit of second degree polynomials in $(X_t(x))_{t\ge 0, x\in B(0,1)}$, we have
\begin{equation}
 \forall t\ge 2, \quad \bbP[ Z\ge t\|Z\|_2]\le e^{-c_2 t},
\end{equation}
with $\|Z\|_2=\E[Z^2]^{1/2}$.
Applying this to $Z_{1}:=\int_{|x| \leq 1} :\tilde{Y}^2_{[t_0,t]}:\dd \mu $ and $Z_{2}:= \int_{|x| \leq 1} \tilde Y_{t_0}\tilde{Y}_{[t_0,t]} \dd \mu$,
we can bound the second and third in the r.h.s. of \eqref{prout} uniformly provided we can prove that for every $t\ge t_0$, we have $$\|Z_i\|_2< c_2/(12 \alpha), for  \text{ for } i=1,2. $$  
Using the notation $\cov_Y(x,y)=\bbE[Y(x)Y(y)]$ for the covariance functions we have 
\begin{equation}
 \begin{split}
  \|Z_1\|^2_2&= 2\int_{|x|, |y|\le 1}  (\cov_{\tilde Y_{[t_0,t]}}(x,y))^2 \rho(x)\rho(y)g(x)g(y)\dd^2 x \dd^2 y ,\\
\|Z_2\|^2_2&= \int_{|x|, |y|\le 1} \cov_{\tilde Y_{[t_0,t]}}(x,y) \cov_{\tilde Y_{t_0}}(x,y)  \rho(x)\rho(y)g(x)g(y)\dd^2 x \dd^2 y.
 \end{split}
\end{equation}
Tedious but standard calculation allows to show that for some positive constant $C$ (depending on the function $\rho$)
\begin{equation}
 \begin{split}
 |\cov_{\tilde Y_{t_0}}(x,y)|&\le  \log  C|x-y| ,\\
  |\cov_{\tilde Y_{[t_0,t]}}(x,y)|&\le \int_{t_0}^{\infty} Q_u(x,y)\dd u+ Ce^{-t_0/C}.
 \end{split}
\end{equation}
These estimates are sufficient to show that $\|Z_1\|_2$ and $\|Z_2\|_2$ 
can be made arbitrarily large by choosing $t_0$ large. \end{proof}

\begin{proof}[Proof of Lemma \ref{lemmanegtaive}]

With our positive assumption for $\rho$, at the cost of a multiplicative factor $e^{C \frac{\beta}{\gamma^2}}$ we can replace 
$\int :e^{\gamma \tilde X}: \dd \mu$ by $\int :e^{\gamma \tilde X}: \dd^2 x$.
Then we obtain the result by a simple comparison with the $1d$ log correlated case on the circle (well defined for $\gamma<\sqrt{2}$) for which we have an explicit expression.

Indeed, if $X_1(e^{i\theta})$ is the circular GFF with covariance $\E[ X_1(e^{i\theta})X_1(e^{i\theta'}) ]= \ln \frac{1}{|e^{i\theta}-e^{i\theta'}|}$ then
the Fyodorov-Bouchaud formula (proved by Remy \cite{Remy}) and the use of Stirling's asymptotics for the $\Gamma$-function yields (in our range of parameters)
\begin{equation}\label{steam}
 \E \left [  \left( \int_{0}^{2 \pi}   :e^{\gamma X_1( e^{i \theta})}: 
\dd\theta \right)^{-\frac{\beta}{\gamma^2}}      \right ] 
= \Gamma\left(1+\frac{\beta}{2}\right) \Gamma\left(1-\frac{\gamma^2}{2}\right)^{\frac{\beta}{\gamma^2}}  (2 \pi)^{-\frac{\beta}{\gamma^2}}
\le e^{C \left(\gb \log \gb + \gb \gamma^{-2}\right)}.
\end{equation}
The following holds
\begin{equation*}
\forall \rho,\rho' \geq \frac{1}{2}, \: \forall \theta,\theta', \quad  \E[\tilde{X}(\rho e^{i \theta}) \tilde{X}(\rho' e^{i \theta'}) ]   \leq \E[X_1(e^{i\theta}) X_1(e^{i\theta'})]+ \ln 8
\end{equation*} 
since $4 |\rho e^{i \theta}- \rho' e^{i \theta'}| \geq  |e^{i \theta}- e^{i \theta'}|$. Therefore, on the annulus $A= \lbrace x; \: \frac{1}{2} \leq |x| \leq 1\rbrace$ one can apply Kahane's inequality (see \cite[Theorem 2.1]{review}) to the convex function $x \mapsto x^{-\frac{\beta}{\gamma^2}}$. Letting $Y$ be a centered Gaussian with variance $\ln 8$ independent of $X_1$ we obtain that
\begin{align*}
   \E \left [  \left(\int_A :e^{\gamma \tilde X}: \dd \mu\right)^{-\frac{\beta}{\gamma^2}}     \right ]   
 & =   \E \left [  \left(\int_{\frac{1}{2}}^1 \int_{0}^{2 \pi}   :e^{\gamma \tilde{X}(\rho e^{i \theta})}:
 \rho d\rho d\theta \right)^{-\frac{\beta}{\gamma^2}}      \right ]    \\ 
& \leq  \E \left [  \left(\int_{\frac{1}{2}}^1 \int_{0}^{2 \pi}   :e^{\gamma X_1(e^{i \theta})}: 
 \rho d\rho d\theta \right)^{-\frac{\beta}{\gamma^2}}      \right ]  \bbE\left[ \left(:e^{\gamma Y}:\right)^{-\frac{\beta}{\gamma^2}}  \right] \\ 
& = e^{\frac{\beta^2 \log 8}{2\gamma^2}+ \frac{\beta\log 2}{\gamma^2}+ \frac{\beta}{2}} \E \left [  \left( \int_{0}^{2 \pi}   :e^{\gamma X_1( e^{i \theta})}: 
\dd\theta \right)^{-\frac{\beta}{\gamma^2}}      \right ] 
\end{align*}
and combined with \eqref{steam} gives us the desired estimate.
The estimate \eqref{devs} is obtained by a standard application of Markov inequality for $\gb=|\log \gamma|^2/(2C).$
\begin{equation}
 \bbP\left[ \int :e^{\gamma \tilde X}: \dd \mu \le \gamma \right]
 \le \gamma^{-\frac{\gb}{\gamma^2}}\bbE\left[ \left(\int :e^{\gamma \tilde X}: \dd \mu \right)^{-\frac{\beta}{\gamma^2}}\right].
\end{equation}
\end{proof}

\begin{proof}[Proof of Lemma \ref{finallemma}]
We write $\bbE_s$ for the conditional expectation $\bbE[ \cdot \ | \ \cF_s]$ where $(\cF_s)_{s\ge 0} $ is the natural filtration associated with $\tilde X_s$, and with some abuse of notation 
$\bbP_s(A):=\bbE_s[\ind_{A}]$.
In what follows, we will write $t$ for $t_\gamma$.

For fixed $s\geq 0$, we set  $\phi(s):= \bbE_t \left[ e^{s \int \left(:e^{\gamma\tilde X_t}:-:e^{\tilde X}: \right)\dd \mu } \right]$ and we have

\begin{equation}\label{grouch}
 \bbP_t \left[ \int :e^{\gamma\tilde X_t}:-:e^{\gamma \tilde X}: \dd \mu \ge e^{-t/8} \right] \le \max\left(1, \phi(s) e^{-s  e^{-t/8} }\right)
\end{equation}

 The random function $\phi$ is almost surely differentiable and if $\bar X_t:= \tilde X-\tilde X_t$ and 
 $\bar K_t(x,y):= \int^{\infty}_t Q_u(x,y) \dd u$
 we have 
 \begin{multline}
  \phi'(s)=\bbE_t \left[ \int \left(:e^{\gamma\tilde X_t}:-:e^{\gamma \tilde X}: \right)\dd \mu  e^{s\int \left(:e^{\gamma\tilde X_t}:-:e^{\gamma \tilde X}: \right)\dd \mu } \right]
  \\
  = \bbE_t  \left[\int :e^{\gamma \tilde X_t(x)}: \left(e^{s\int \left(:e^{\gamma\tilde X_t}:-:e^{\gamma \tilde X}: \right)\dd \mu } -:e^{\bar X_t(x)}:e^{s\int \left(:e^{\gamma\tilde X_t}:-:e^{\gamma \tilde X}: \right)\dd \mu } \right) \mu(\dd^2 x) \right]  \\
  =\int :e^{\gamma \tilde X_t(x)}: \bbE_t  \left[e^{s\int \left(:e^{\gamma\tilde X_t}:-:e^{\gamma \tilde X}: \right)\dd \mu } -e^{s\int \left(:e^{\gamma\tilde X_t}:-e^{\gamma^2 \bar K_t(x,\cdot):}e^{\gamma \tilde X}: \right)\dd \mu } \right]   \mu(\dd^2 x)
  \end{multline}
where in the last line we used Girsanov formula \eqref{girsanov}.
Now rewriting the expectation in the integrand of the r.h.s.\
we have
\begin{multline}
 \bbE_t  \left[e^{s\int \left(:e^{\gamma\tilde X_t}:-:e^{\gamma \tilde X}: \right)\dd \mu }\left(1-e^{-s\int (e^{\gamma^2 \bar K_t(x,\cdot)}-1):e^{\gamma \tilde X}: \dd \mu } \right)\right] \\
 \le  s \bbE_t  \left[e^{s\int \left(:e^{\gamma\tilde X_t}:-:e^{\gamma \tilde X}: \right)\dd \mu }\int (e^{\gamma^2 \bar K_t(x,\cdot)}-1):e^{\gamma \tilde X}: \dd \mu  \right]
 \le s\phi(s)\int (e^{\gamma^2 \bar K_t(x,\cdot)}-1) \dd \mu.
\end{multline}
Where the last linw is obtained using  the FKG inequality \eqref{FKG} for the field $\bar X_t$ and  the increasing functions
$e^{s\int \left(:e^{\gamma\tilde X_t}:-:e^{\gamma \tilde X}: \right)\dd \mu }$ and $\int (e^{\gamma^2 \bar K_t(x,\cdot)}-1):e^{\gamma \tilde X}: \dd \mu$ whose $\bbE_t$ average are respectively $\phi(s)$ and $\int (e^{\gamma^2 \bar K_t(x,\cdot)}-1) \dd \mu$.

\medskip

Using our assumption \eqref{normalization},
one can check that there exists a constant $C$ such that for all $\gamma$ sufficiently small all $x$ and $t>0$,  
\begin{equation}
 \int (e^{\gamma^2 \bar K_t(x,y)}-1)  \mu(\dd y)\le C \gamma^2 e^{-t}.
\end{equation}
This yields 
\begin{equation}
  \phi'(s)\le\left[ C\gamma^2 e^{-t}  \int :e^{\gamma \tilde X_t}: \dd \mu \right]s\phi(s).
\end{equation}
Hence on the event 
$ \cA_t:= \left\{ \int :e^{\gamma \tilde X_t}: \dd \mu \le  2 e^{t/2} \right\}, $
we have $\phi(s)\le e^{ C\gamma^2 e^{-t/2} s^2}$.
Hence integrating \eqref{grouch} for $s= e^{3t/8}$ we obtain 
\begin{equation}
 \bbP_t \left[ \int :e^{\gamma\tilde X_t}:-:e^{\gamma \tilde X}: \dd \mu \ge e^{-t/4} \right] \le \bbP[\cA^{\complement}_t] +\exp(-e^{t/4}/2).
\end{equation}
Finally we have
\begin{equation}
  \int :e^{\gamma \tilde X_t}: \dd \mu \le e^{t/2} +   \int :e^{\gamma \tilde X_t}: \ind_{\{\tilde X_t> t\gamma^{-1}/2\}} \dd \mu.
\end{equation}
Using the inequality (recall that $\var \tilde X_t(x)=t+2$),  
$\bbE\left[ :e^{\gamma \tilde X_t}: \ind_{\{ \tilde X_t> t\gamma^{-1}/2\}}\right]\le e^{-\frac{t\gamma^{-2}}{10}},$
and thus $\bbP[\cA^{\complement}_t]\le  e^{-\frac{t\gamma^{-2}}{20}}$.\end{proof}

%
%
%
%
%
%
%
%

\appendix

\section{Appendix}

%


\subsection{Relation between the centered Liouville action $S_{L,(\chi_k,z_k)}$ and the Liouville action $S$}\label{app:equiv}

Recall that the centered Liouville action is defined on $H^1(\hat{\C})$ by the following expression:
\begin{equation*}
S_{L,(\chi_k,z_k)}(h)=  \frac{1}{4\pi}\int_{\C}( |\nabla_z h (z)|^2 + 4 \pi \Lambda e^{h(z)}w(z)    g(z))\dd^2z+   \frac{1}{4 \pi  }(4- \sum_{k=1}\chi_k) \int_{\C} h(z)    g(z) \dd^2z
\end{equation*}
where
\begin{equation*}
l(  (\chi_k,z_k) )= - \sum_{k=1}^n \chi_k(1-\frac{\chi_k}{4}) \ln g(z_k)  -  \frac{1}{2} \sum_{k \not = j }  \chi_k \chi_j G(z_j, z_k) 
\end{equation*}

Now, we prove Lemma \ref{lemmaequivalence} on the link between $S_{L,(\chi_k,z_k)}$ and $S_{(\chi_k,z_k)}$.

\proof

Recall the integration by parts formula 
\begin{equation} \label{JPP}
\int_D \partial_z F \dd^2 z   = \frac{i}{2}  \oint_C F(z) \overline{\dd z}
\end{equation}
where $C$ is the exterior contour of the domain $D$.

Recall that $\pi S_{(\chi_k,z_k)}$ is the limit of $ \pi S_\epsilon$ as $\epsilon$ goes to $0$ where
\begin{align}
  \pi &S_\epsilon (\phi)  \nonumber \\
  =& \int_{\C \setminus \cup_{k=1}^n B(z_k,\epsilon) \cup \lbrace |z| > \frac{1}{\epsilon}\rbrace} (  |\partial_z \phi |^2  + \pi \Lambda e^{\phi(z)})\dd^2z  - i \sum_{k=1}^n \frac{\chi_k}{2} \oint_{|z-z_k|=\epsilon}   \phi(z) \frac{\overline{\dd z}}{\bar{z}-\bar{z_k}}+ 2 i \oint_{|z|=\frac{1}{\epsilon}}   \phi(z) \frac{\overline{\dd z}}{\bar{z}}\nonumber \\
&+  \frac{\pi}{2} \sum_{k=1}^n \chi_k^2 \ln \frac{1}{\epsilon}+ 8 \pi \ln \frac{1}{\epsilon}  \label{expressionlemmaS}
\end{align}
where here the contour integrals $\oint$ are oriented counterclockwise.

We first consider the case $\phi= h+\varphi$ with $h$ smooth and $\varphi$ is the explicit function
\begin{equation}\label{defvarphihere}
\varphi(z)= \ln (  g(z)w(z)).
\end{equation}
We have using the integration by parts formula \eqref{JPP}   
\begin{align*}
&  \int_{\C \setminus \cup_{k=1}^n B(z_k,\epsilon) \cup \lbrace |z| > \frac{1}{\epsilon}\rbrace}   |\partial_z \phi |^2  \dd^2 z    \\
& = -\frac{i}{2} \sum_{k=1}^n \oint_{|z-z_k|=\epsilon}   \phi(z) \partial_{\bar{z}}  \phi(z)  \overline{\dd z} +  \frac{i}{2} \oint_{|z|=\frac{1}{\epsilon}}   \phi(z)  \partial_{\bar{z}}  \phi(z)  \overline{\dd z}-  \int_{\C \setminus \cup_{k=1}^n B(z_k,\epsilon) \cup \lbrace |z| > \frac{1}{\epsilon}\rbrace}   \phi (z) \partial_z \partial_{\bar{z}} \phi (z)  \dd^2 z    .
\end{align*}
Now, since $h$ is smooth we have the expansion $\partial_{\bar{z}}\phi(z)= - \frac{\chi_k}{2} \frac{1}{ \bar{z}-\bar{z_k}  }+O(1)$ as $z$ goes to $z_k$ and hence
\begin{equation*}
 \oint_{|z-z_k|=\epsilon}   \phi(z) \partial_{\bar{z}}  \phi(z)  \overline{\dd z}=  - \frac{\chi_k}{2} \oint_{|z-z_k|=\epsilon}   \phi(z)    \frac{\overline{\dd z}}{\bar{z}-\bar{z_k}} +o(1)
\end{equation*}
as $\epsilon$ goes to $0$.
Also, we have the expansion $\partial_{\bar{z}}\phi(z)= - \frac{2}{ \bar{z} }+o(\frac{1}{|z|})$ as $z$ goes to infinity hence  
\begin{equation*}
 \oint_{|z|=\frac{1}{\epsilon}}   \phi(z)  \partial_{\bar{z}}  \phi(z)  \overline{\dd z}=  -2 \oint_{|z|=\frac{1}{\epsilon}}   \phi(z)  \frac{\overline{\dd z}}{\bar{z}}+o(1).
\end{equation*}
Therefore we get up to $o(1)$ terms that
\begin{align*}
  \pi S_\epsilon (\varphi)   =& - \int_{\C \setminus \cup_{k=1}^n B(z_k,\epsilon) \cup \lbrace |z| > \frac{1}{\epsilon}\rbrace}  \phi (z) \partial_z  \partial_{\bar{z}} \phi (z)  \dd^2 z     - \frac{i}{4} \sum_{k=1}^n \chi_k \oint_{|z-z_k|=\epsilon}   \phi(z) \frac{\overline{\dd z}}{\bar{z}-\bar{z_k}}\\&+  i \oint_{|z|=\frac{1}{\epsilon}}   \phi(z) \frac{\overline{\dd z}}{\bar{z}} +  \frac{\pi}{2} \sum_{k=1}^n \chi_k^2 \ln \frac{1}{\epsilon}+ 8 \pi \ln \frac{1}{\epsilon}  .
\end{align*}

Now, we analyze each term $\oint_{|z-z_k|=\epsilon}   \phi(z) \frac{\overline{\dd z}}{\bar{z}-\bar{z_k}}$. We have as $\epsilon$ goes to $0$
\begin{align*}
& -  \chi_k\frac{i}{4}\oint_{|z-z_k|=\epsilon}   \phi(z) \frac{\overline{\dd z}}{\bar{z}-\bar{z_k}}  \\
& =   - \frac{ \chi_k}{4} \int_0^{2 \pi }  ( h( z_k + \epsilon e^{i \theta}) + \ln g  (z_k+ \epsilon e^{i \theta})  + \sum_{j=1}^n \chi_j G(z_j, z_k + \epsilon e^{i \theta})   ) \dd \theta    \\
& =  -  \chi_k \frac{\pi}{2} (h( z_k) + \ln g  (z_k)  + \sum_{j \not = k}  \chi_j G(z_j, z_k)  )   + \frac{\pi}{4} \chi_k^2 \ln g (z_k)-  \frac{\pi}{2} \chi_k^2 \ln \frac{1}{\epsilon} -\frac{\pi}{2} \chi_k^2 \kappa +o(1)\\
& =  -  \chi_k \frac{\pi}{2} h( z_k)   -  \chi_k \frac{\pi}{2} \sum_{j \not = k}  \chi_j G(z_j, z_k)    -\frac{\pi}{2}  \chi_k(1-\frac{\chi_k}{2}) \ln g (z_k)-  \frac{\pi}{2} \chi_k^2 \ln \frac{1}{\epsilon} -\frac{\pi}{2} \chi_k^2 \kappa +o(1).
\end{align*}
We also have as $\epsilon$ goes to $0$ that 
\begin{align*}
  i \oint_{|z|=\frac{1}{\epsilon}}   \phi(z) \frac{\overline{\dd z}}{\bar{z}}  
& =   \int_0^{2 \pi }  ( h( \frac{1} {\epsilon} e^{i \theta}) + \ln g  ( \frac{1}{\epsilon} e^{i \theta})  + \sum_{k=1}^n \chi_k G(z_k, \frac{1}{\epsilon} e^{i \theta})   ) \dd \theta    \\
& =  2 \pi h( \infty) +2 \pi \ln 4+ 8\pi \ln \epsilon  + 2 \pi \sum_{k=1}^n \chi_k G(z_k, \infty) +o(1)\\
& =  2 \pi h( \infty) +2 \pi \ln 4+ 8\pi \ln \epsilon  + 2 \pi \sum_{k=1}^n \chi_k  ( \frac{1}{2} \ln 2-\frac{1}{4} \ln g(z_k)+\kappa) +o(1).  
\end{align*}
Hence, we get the following expansion
\begin{align*}
&  - \frac{i}{4} \sum_{k=1}^n \chi_k \oint_{|z-z_k|=\epsilon}   \phi(z) \frac{\overline{\dd z}}{\bar{z}-\bar{z_k}}  + \frac{\pi}{2} \sum_{k=1}^n \chi_k^2 \ln \frac{1}{\epsilon} +  i \oint_{|z|=\frac{1}{\epsilon}}   \phi(z) \frac{\overline{\dd z}}{\bar{z}}  +8 \pi \ln \frac{1}{\epsilon}\\
& = 2 \pi h(\infty)- \frac{\pi}{2} \sum_{k=1}^n \chi_k h( z_k)  - \pi \sum_{k=1}^n \chi_k(1-\frac{\chi_k}{4}) \ln g(z_k)  -  \frac{\pi}{2} \sum_{k \not = j }  \chi_k \chi_j G(z_j, z_k)  +C+o(1) 
\end{align*}
where $C=2 \pi \ln 4+ 2 \pi \sum_{k=1}^n \chi_k  ( \frac{1}{2} \ln 2+\kappa)-\frac{\pi}{2} \kappa \sum_{k=1}^n \chi_k^2$.

We now analyse the term
\begin{equation*} 
- \int_{\C \setminus \cup_{k=1}^n B(z_k,\epsilon) \cup \lbrace |z| > \frac{1}{\epsilon}\rbrace}  \phi (z) \partial_z  \partial_{\bar{z}} \phi (z)  \dd^2 z  
\end{equation*}
 by identifying the contribution of $h$ and $\varphi$ separately in the sum $\phi=h+\varphi$ with \eqref{defvarphihere}. Using $\Delta_{  g} G(\cdot,z_k)=-2\pi(\delta_{z_k}-\tfrac{1}{4\pi})$   and $\Delta_{   g}\ln   g=-2$  we get
$$\Delta_{  g}   \varphi (z) = -2 +\frac{1}{2}\sum_{k=1}^n \chi_k\quad \text{on}\quad  \C \setminus \cup_{k=1}^n B(z_k,\epsilon) \cup \lbrace |z| > \frac{1}{\epsilon}\rbrace.$$
We deduce that
\begin{align*}
& - \int_{\C \setminus \cup_{k=1}^n B(z_k,\epsilon) \cup \lbrace |z| > \frac{1}{\epsilon}\rbrace}  \phi (z) \partial_z  \partial_{\bar{z}} \varphi (z)  \dd^2 z    \\
&  =- \frac{1}{4} \int_{\C \setminus \cup_{k=1}^n B(z_k,\epsilon) \cup \lbrace |z| > \frac{1}{\epsilon}\rbrace}  \phi (z)   \Delta_z   \varphi (z) \dd^2 z    \\
&  =- \frac{1}{4} \int_{\C \setminus \cup_{k=1}^n B(z_k,\epsilon) \cup \lbrace |z| > \frac{1}{\epsilon}\rbrace}  \phi (z)   \Delta_{  g}    \varphi (z) {  g} (z) \dd^2 z    \\
&  = ( \frac{1}{2}-\frac{1}{8}\sum_{k=1}^n \chi_k)  \int_{\C \setminus \cup_{k=1}^n B(z_k,\epsilon) \cup \lbrace |z| > \frac{1}{\epsilon}\rbrace}  \phi (z) {  g} (z) \dd^2 z    \\   
&  = ( \frac{1}{2}-\frac{1}{8}\sum_{k=1}^n \chi_k)  \int_{\C}  \phi (z) {  g} (z) \dd^2 z    +o(1)\\
&  = ( \frac{1}{2}-\frac{1}{8}\sum_{k=1}^n \chi_k)  \int_{\C}  h (z) {  g} (z) \dd^2 z    +( \frac{1}{2}-\frac{1}{8}\sum_{k=1}^n \chi_k)  \int_{\C}  \ln {  g} (z) {  g} (z) \dd^2 z   +o(1).
\end{align*}

We also have that  
\begin{align*}
&    -\int_{\C \setminus \cup_{k=1}^n B(z_k,\epsilon) \cup \lbrace |z| > \frac{1}{\epsilon}\rbrace} \varphi (z) \partial_z  \partial_{\bar{z}} h (z)      \dd^2 z     \\
&   = -\frac{1}{4} \int_{\C \setminus \cup_{k=1}^n B(z_k,\epsilon) \cup \lbrace |z| > \frac{1}{\epsilon}\rbrace} \varphi (z) \Delta_z h (z)      \dd^2 z     \\
&   = -\frac{1}{4} \int_{\C \setminus \cup_{k=1}^n B(z_k,\epsilon) \cup \lbrace |z| > \frac{1}{\epsilon}\rbrace}  (-4 G(z,\infty) +4(\kappa-\frac{1}{2} \ln 2)+ \sum_{k=1}^n\chi_k G(z_k,z))    \Delta_z h (z)      \dd^2 z    \\
&   = -\frac{1}{4} \int_{\C \setminus \cup_{k=1}^n B(z_k,\epsilon) \cup \lbrace |z| > \frac{1}{\epsilon}\rbrace}  (-4 G(z,\infty) +4(\kappa-\frac{1}{2} \ln 2)+ \sum_{k=1}^n\chi_k G(z_k,z))    \Delta_{  g}  h (z)     {  g} (z) \dd^2 z    \\
&   = -\frac{1}{4} \int_{\C}  (-4 G(z,\infty) +4(\kappa-\frac{1}{2} \ln 2)+ \sum_{k=1}^n\chi_k G(z_k,z))    \Delta_{  g}  h (z)     {  g} (z) \dd^2 z    +o(1)\\
&   = -\frac{1}{4} \int_{\C}  \Delta_{  g}  (-4 G(z,\infty) +4(\kappa-\frac{1}{2} \ln 2)+ \sum_{k=1}^n\chi_k G(z_k,z))    h (z)     {  g} (z) \dd^2 z    +o(1)\\
& = -\frac{1}{4}   \left (       8 \pi (h(\infty)   - \frac{1}{4 \pi}\int_\C  h(z) {  g} (z) \dd^2 z  ) -2 \pi \sum_{k=1}^n\chi_k ( h(z_k)-  \frac{1}{4 \pi}\int_\C  h(z) {  g} (z) \dd^2z  )  \right )  \\
& =       -2 \pi h(\infty)   +( \frac{1}{2} -\frac{1}{8} \sum_{k=1}^n \chi_k) (\int_\C  h(z) {  g} (z) \dd^2z)   +\frac{\pi}{2}\sum_{k=1}^n\chi_k  h(z_k) .
\end{align*}
Therefore, gathering the two above expressions, we get 
\begin{align*}
 \pi S (\varphi)    & =      \int_{\C}( |\partial_z h (z)|^2 + \pi \Lambda e^{h(z)}e^{ \sum_{k=1}^n \chi_k G(z_k,z)}  {  g} (z)) \dd^2 z   \\
& +( 1 -\frac{1}{4} \sum_{k=1}^n \chi_k) (\int_\C  h(z) {  g} (z)  \dd^2 z )   - \pi \sum_{k=1}^n \chi_k(1-\frac{\chi_k}{4}) \ln {  g} (z_k)  -  \frac{\pi}{2} \sum_{k \not = j }  \chi_k \chi_j G(z_j, z_k)  +C_\star ((\chi_k))
\end{align*}
 where $C_{\star}((\chi_k))=2 \pi \ln 4+ 2 \pi \sum_{k=1}^n \chi_k  ( \frac{1}{2} \ln 2+\kappa)-\frac{\pi}{2} \kappa \sum_{k=1}^n \chi_k^2+( \frac{1}{2}-\frac{1}{8}\sum_{k=1}^n \chi_k)  \int_{\C}  \ln {  g} (z) {  g} (z) \dd^2z $.

\vspace{0.1 cm}
Now, we treat the general case. We write $\phi=h+\varphi$ and  $\phi_\ast=h_\ast+\varphi$ ($\phi_\ast$ is the solution of the Liouville equation) where $\varphi$ is defined by \eqref{defvarphihere}. Using $\phi=(\phi-\phi^*)+\phi^*$, we have 
\begin{equation*}
 \pi S_\epsilon (\phi)=  \int_{\C \setminus \cup_{k=1}^n B(z_k,\epsilon) \cup \lbrace |z| > \frac{1}{\epsilon}\rbrace} (  |\partial_z (\phi-\phi_\ast) |^2  + \pi \Lambda (e^{\phi(z)}-e^{\phi_\ast(z)}) ) \dd^2 z + \pi S_\epsilon (\phi_\ast)+ S_\epsilon'
\end{equation*} where $S_\epsilon'$ can be expressed as a sum
\begin{align*}
 S_\epsilon'  
& = S_{1,\epsilon}' +S_{2,\epsilon}'   
\end{align*}
with
\begin{align*}
 S_{1,\epsilon}'=&2 \int_{\C \setminus \cup_{k=1}^n B(z_k,\epsilon) \cup \lbrace |z| > \frac{1}{\epsilon}\rbrace}   \partial_z (\phi-\phi_\ast) \partial_z \varphi \dd^2z\\&- i \sum_{k=1}^n \frac{\chi_k}{2} \oint_{|z-z_k|=\epsilon}   (\phi-\phi_\ast)(z) \frac{\overline{\dd z}}{\bar{z}-\bar{z_k}}+ 2 i \oint_{|z|=\frac{1}{\epsilon}}   (\phi(z)-\phi_\ast(z)) \frac{\overline{\dd z}}{\bar{z}}
\end{align*}
and
\begin{equation*}
 S_{2,\epsilon}'=2 \int_{\C \setminus \cup_{k=1}^n B(z_k,\epsilon) \cup \lbrace |z| > \frac{1}{\epsilon}\rbrace}   \partial_z (h-h_\ast) \partial_z h_\ast \dd^2z  .
 \end{equation*}
Now, we have the following convergence  
\begin{equation*}
 S_{2,\epsilon}' \underset{\epsilon \to 0}{\rightarrow} 2 \int_{\C}   \partial_z (h-h_\ast) \partial_z h_\ast  \dd^2 z   
 \end{equation*}
so we just have to deal with the $ S_{1,\epsilon}'$ term. By integration by parts we get (where $o(1)$ is with respect to $\epsilon$ going to $0$)
\begin{align*}
  S_{1,\epsilon}'   =&2 \int_{\C \setminus \cup_{k=1}^n B(z_k,\epsilon) \cup \lbrace |z| > \frac{1}{\epsilon}\rbrace}   \partial_z (\phi-\phi_\ast) \partial_z \varphi  \dd^2 z - i \sum_{k=1}^n \frac{\chi_k}{2} \oint_{|z-z_k|=\epsilon}   (\phi-\phi_\ast)(z) \frac{\overline{\dd z}}{\bar{z}-\bar{z_k}}\\ &+ 2 i \oint_{|z|=\frac{1}{\epsilon}}   (\phi(z)-\phi_\ast(z)) \frac{\overline{\dd z}}{\bar{z}}   \\
     =&- 2 \int_{\C \setminus \cup_{k=1}^n B(z_k,\epsilon) \cup \lbrace |z| > \frac{1}{\epsilon}\rbrace}   \partial_z (\phi-\phi_\ast) \partial_{\bar{z}}\partial_z \varphi  \dd^2 z  +o(1)\\
 =&- \frac{1}{2} \int_{\C \setminus \cup_{k=1}^n B(z_k,\epsilon) \cup \lbrace |z| > \frac{1}{\epsilon}\rbrace}   (h-h_\ast) \Delta_z \varphi  \dd^2 z  +o(1).
\end{align*}
Since on $\C \setminus \cup_{k=1}^n B(z_k,\epsilon) \cup \lbrace |z| > \frac{1}{\epsilon}\rbrace$, we have 
\begin{equation*}
\Delta_z \varphi= (-2+ \frac{1}{2}\sum_{k=1}^n \chi_k) {  g} (z)
\end{equation*}
this leads to 
\begin{equation*}
 S_{1,\epsilon}' \underset{\epsilon \to 0}{\rightarrow} (1-\frac{1}{4} \sum_{k=1}^n \chi_k ) \int_{\C} (h-h_\ast) {  g} (z) \dd^2 z   .
 \end{equation*}
Gathering the above considerations, we get 
\begin{multline*}
 \pi S_\epsilon (\phi)\underset{\epsilon \to 0}{\rightarrow} \pi S (\phi_\ast)+  \int_{\C} (  |\partial_z (h-h_\ast) |^2  + \pi \Lambda( e^{\phi(z)}-e^{\phi_\ast(z)} )) \dd^2 z \\+ 2 \int_{\C}   \partial_z (h-h_\ast) \partial_z h_\ast  \dd^2 z    + (1-\frac{1}{4} \sum_{k=1}^n \chi_k ) \int_{\C} (h-h_\ast) {  g} (z) \dd^2 z  
\end{multline*}
which proves identity \eqref{mainidentityapp}. 

\qed

\subsection{Existence of solutions to the Liouville equation}\label{app:exist}
Here, we give a short proof of the existence and uniqueness to the equation \eqref{equationdebase}.

Let $\psi$ be some function defined on the Riemann sphere. We introduce the functional $J_{\psi}$ on functions $h\in\bar H^1(\hat{\C})$ with vanishing mean on the sphere 
\begin{equation}\label{defJ}
J_\psi (h)= \frac{1}{4 \pi}  \int_{\C} |\nabla_{g} h(z)|^2\, g(z) \dd^2 z    - \int_{\C} (\psi(z)- m_{g}(\psi) ) h(z) \, g(z) \dd^2 z      + c_\psi \ln \int_{\C} w(z) e^{h(z)} \, g(z) \dd^2 z  
\end{equation}
where we set
\begin{equation*}
c_\psi  =  \int_{\C}    \psi(z) g(z)  \dd^2 z  + \sum_k \chi_k -4  
\end{equation*}
and
\begin{equation*}
m_{g}(\psi)=  \frac{1}{4 \pi}  \int_{\C}    \psi(z) g(z)  \dd^2 z  .
\end{equation*}


Recall that we have the following Moser-Trudinger inequality for all functions $h\in\bar H^1(\hat{\C})$ with vanishing mean on the sphere (see \cite{MR} for example):
\begin{equation}\label{MTineq}
 \ln \int_{\C} w(z) e^{h(z)} \, g(z) \dd^2 z   \leq    \frac{1}{16 \pi (1 \wedge \inf_{i} (1-\chi_k/2) )} \int_{\C} |\nabla_{g} h(z)|^2 g(z) \dd^2 z    .
\end{equation}
Therefore, the functional $J_\psi$ is bounded from below if and only if $c_\psi>- 4 (1 \wedge \inf_{i} (1-\chi_k/2) )$. In that case, the minimum solves the following equation:  

\begin{proposition}\label{sol:h}
Assume $ c_\psi>- 4 (1 \wedge \inf_{i} (1-\chi_k/2) )$. Then the equation
\begin{equation}\label{equationdebase}
\Delta_{g} h = -2 \pi (\psi- m_{g} (\psi ))+2 \pi c_\psi (\frac{ w e^{h}}{\int w e^{h}  g  }-\frac{1}{4 \pi})
\end{equation}
admits a unique solution $h_\psi$ with vanishing mean on the sphere.
\end{proposition}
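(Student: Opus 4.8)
The plan is to solve \eqref{equationdebase} by the direct method of the calculus of variations applied to the functional $J_\psi$ of \eqref{defJ}, and to obtain uniqueness from convexity. Write $\beta:=1\wedge\inf_k(1-\chi_k/2)$, so that the hypothesis reads $c_\psi>-4\beta$, and abbreviate $D(h):=\int_\C|\nabla_g h|^2 g\,\dd^2z$ (which equals $\int_\C|\nabla_z h|^2\,\dd^2z$ by conformal invariance in dimension two, and whose square root is an equivalent norm on $\bar H^1(\hat\C)$). The first step is coercivity: the singular Moser--Trudinger inequality \eqref{MTineq} gives $\ln\int_\C w e^{h}g\,\dd^2z\le C+\frac{1}{16\pi\beta}D(h)$, while Jensen's inequality applied with the probability measure $\frac1{4\pi}g\,\dd^2z$, using $\int_\C h g\,\dd^2z=0$ and $\int_\C\ln w\,g\,\dd^2z=\sum_k\chi_k\int_\C G(z_k,\cdot)g\,\dd^2z=0$, gives $\ln\int_\C w e^{h}g\,\dd^2z\ge\ln 4\pi$; moreover the linear term is bounded by $C\|h\|_{L^2(g)}\le C'\sqrt{D(h)}$ by the Poincar\'e inequality on the mean-zero subspace. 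Hence $J_\psi(h)\ge\frac1{4\pi}D(h)-C'\sqrt{D(h)}+c_\psi\ln 4\pi$ when $c_\psi\ge0$, and $J_\psi(h)\ge(\frac1{4\pi}-\frac{|c_\psi|}{16\pi\beta})D(h)-C'\sqrt{D(h)}-C''$ when $c_\psi<0$; the assumption $c_\psi>-4\beta$ is precisely what makes the coefficient of $D(h)$ positive, so $J_\psi$ is bounded below and coercive on $\bar H^1(\hat\C)$ in all cases.

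Given coercivity, take a minimizing sequence $(h_n)$; it is bounded in the Hilbert space $\bar H^1(\hat\C)$, hence has a weakly convergent subsequence $h_n\rightharpoonup h_\psi$. The Dirichlet term is convex and strongly continuous, hence weakly lower semicontinuous; the linear term is weakly continuous; and for the exponential term, \eqref{MTineq} along the bounded sequence yields a uniform bound on $\|e^{h_n}\|_{L^q(g\,\dd^2z)}$ for some $q>1$, which combined with the compact embedding $H^1(\hat\C)\hookrightarrow L^p$ for all finite $p$ (and a.e.\ convergence along a further subsequence) gives $w e^{h_n}\to w e^{h_\psi}$ in $L^1(g\,\dd^2z)$, so the exponential term is in fact weakly continuous. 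Therefore $J_\psi(h_\psi)\le\liminf_n J_\psi(h_n)=\inf J_\psi$, and $h_\psi$ is a minimizer.

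Differentiating $t\mapsto J_\psi(h_\psi+tv)$ at $t=0$ for an arbitrary $v\in\bar H^1(\hat\C)$ gives the identity $\frac1{2\pi}\int_\C\nabla_g h_\psi\cdot\nabla_g v\,g\,\dd^2z-\int_\C(\psi-m_g(\psi))v\,g\,\dd^2z+c_\psi\,\frac{\int_\C w e^{h_\psi}v\,g\,\dd^2z}{\int_\C w e^{h_\psi}g\,\dd^2z}=0$. Since this holds for all mean-zero $v$, the function $-\frac1{2\pi}\Delta_g h_\psi-(\psi-m_g(\psi))+c_\psi\frac{w e^{h_\psi}}{\int_\C w e^{h_\psi}g\,\dd^2z}$ must be constant in $L^2(g)$; integrating against $g\,\dd^2z$ and using $\int_\C\Delta_g h_\psi\,g\,\dd^2z=0$ and $\int_\C(\psi-m_g(\psi))g\,\dd^2z=0$ pins the constant down to $\frac{c_\psi}{4\pi}$, which rearranges to exactly equation \eqref{equationdebase}; elliptic regularity then makes $h_\psi$ a classical solution away from the $z_k$. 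For uniqueness, note $J_\psi$ is strictly convex on $\bar H^1(\hat\C)$ --- $D$ is strictly convex there (a mean-zero function with vanishing gradient is zero), the linear term is affine, and $h\mapsto\ln\int_\C w e^{h}g\,\dd^2z$ is convex by H\"older's inequality --- so it has at most one critical point; since any mean-zero $H^1$ solution of \eqref{equationdebase} is a critical point of $J_\psi$ by the computation above, $h_\psi$ is the unique one.

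I expect the coercivity step to be the crux: one must invoke the sharp singular Moser--Trudinger inequality with the precise constant $\frac1{16\pi\beta}$ and check that the borderline value matches the hypothesis $c_\psi>-4\beta$, which is exactly where the Troyanov-type condition on the $\chi_k$ enters. All other steps --- weak lower semicontinuity via compact Sobolev embeddings, the Euler--Lagrange computation, and the convexity argument for uniqueness --- are standard.
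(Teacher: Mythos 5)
Your variational argument follows exactly the route the paper sketches in Appendix \ref{app:exist} (the paper gives essentially no details beyond defining $J_\psi$ in \eqref{defJ}, quoting \eqref{MTineq}, and asserting that the minimizer solves \eqref{equationdebase}), and the existence half of your write-up is a correct and complete filling-in of that sketch: the coercivity computation correctly identifies $c_\psi>-4\beta$ (with your $\beta=1\wedge\inf_k(1-\chi_k/2)$) as the threshold at which the Moser--Trudinger constant $\frac{1}{16\pi\beta}$ is beaten, the weak lower semicontinuity and uniform-integrability argument for the exponential term is standard, and the Euler--Lagrange computation with the multiplier pinned to $\frac{c_\psi}{4\pi}$ reproduces \eqref{equationdebase} exactly. (A cosmetic point: \eqref{MTineq} as printed carries no additive constant, and one is needed since the left-hand side equals $\ln\int w\,\dd g\ge\ln 4\pi>0$ at $h=0$; you correctly insert one.)

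The gap is in uniqueness. You argue that $J_\psi$ is strictly convex because $D$ is strictly convex, the linear term is affine, and $h\mapsto\ln\int_\C we^hg\,\dd^2z$ is convex. But that last functional enters $J_\psi$ with the coefficient $c_\psi$, and the hypothesis admits $c_\psi\in(-4\beta,0)$. For $c_\psi<0$ you are adding a strictly convex functional to a \emph{concave} one, and the sum need not be convex: the second variation at $h$ in direction $v$ is $\frac{1}{2\pi}D(v)+c_\psi\var{v}_{\mu_h}$ with $\mu_h=\frac{we^hg\,\dd^2z}{\int we^hg\,\dd^2z}$, and no Poincar\'e inequality bounding $\var{v}_{\mu_h}$ by $\frac{1}{2\pi|c_\psi|}D(v)$ holds uniformly over $h$. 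So ``strictly convex, hence at most one critical point'' fails in that regime, which is precisely the mean-field-equation regime where multiplicity is a genuine issue; the paper itself concedes this in Section \ref{ops}, where under Troyanov's condition \eqref{Troya} (which for $\psi\equiv0$ is exactly $c_0>-4\beta$) it says that equation \eqref{newequation} could have several solutions and invokes Liu--Tian \cite{LG} for uniqueness only when all $\chi_k>0$. Your proof is therefore complete for $c_\psi\ge0$ --- which is all the paper ever uses, since Proposition \ref{prop:asymp} assumes $c_\psi>0$ --- but as a proof of Proposition \ref{sol:h} as literally stated, the uniqueness claim for $-4\beta<c_\psi<0$ is not established (and the proposition itself appears to overstate uniqueness in that range).
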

 
%
%

\subsection{Convexity considerations} \label{Convexconsi}

\subsubsection{General considerations}

Recall that when $\textsc{f}$ is a function taking values in $]-\infty,\infty]$ on some Banach space $B$ then we can define   its lower semicontinuous enveloppe $\textsc{f}^{\text{sc}}$ by the following limit 
\begin{equation*}
\textsc{f}^{\text{sc}}(\lambda)=  \underset{\delta \to 0}{\lim}  \; \underset{\lambda' \in B(\lambda,\delta)}{\inf}\: \textsc{f}(\lambda').
\end{equation*} 
This lower semicontinuous enveloppe satisfies the following properties:
\begin{enumerate}
\item
If $\textsc{f}$ is convex then so is $\textsc{f}^{\text{sc}}$. 
\item
For all $\lambda$ there exists some sequence $(\lambda_n)_{n \geq 1}$ such that $\lambda_n$ converges to $\lambda$ and $\textsc{f}(\lambda_n)$ converges to $\textsc{f}^{\text{sc}}(\lambda)$.
\end{enumerate}

Now, if $\textsc{f}$ is a convex function taking values in $]-\infty,\infty]$ we introduce the Legendre transform $\textsc{f}^{\ast}$ by the formula
\begin{equation*}
\textsc{f}^{\ast}(x)= \sup_{\lambda \in B}  (<\lambda,x>- \textsc{f}(\lambda) ).
\end{equation*} 
By item 2 above one can easily see that $\textsc{f}^{\ast}=(\textsc{f}^{\text{sc}})^{\ast}$. Finally we recall the Fenchel-Moreau theorem
\begin{equation*}
\textsc{f}^{\ast \ast}= \textsc{f}^{\text{sc}}.
\end{equation*}
 
Hence we deduce the following lemma which we will need in the following:

\begin{lemma}
Let I be some convex and lower semi continuous function on $B^{\ast}$ and $\textsc{f}$ some convex function such that
\begin{equation*}
\textsc{f}^{\text{sc}}(\lambda)= \sup_{x \in B^{\ast}}  (  <\lambda,x>- I(x)  )
\end{equation*}
then we have the following identity: $I= \textsc{f}^{\ast}$.
\end{lemma}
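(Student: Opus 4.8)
The plan is to read off the identity $I=\textsc{f}^{\ast}$ directly from the Fenchel--Moreau theorem together with the elementary relation $\textsc{f}^{\ast}=(\textsc{f}^{\text{sc}})^{\ast}$ recorded above, after a short reformulation of the hypothesis. First I would note that, since $\langle\cdot,\cdot\rangle$ denotes the duality pairing between $B$ and $B^{\ast}$, the assumption
\[
\textsc{f}^{\text{sc}}(\lambda)=\sup_{x\in B^{\ast}}\big(\langle\lambda,x\rangle-I(x)\big)
\]
is, by the very definition of the Legendre transform of the convex function $I$ on $B^{\ast}$, nothing but the statement $\textsc{f}^{\text{sc}}=I^{\ast}$, where $I^{\ast}$ is a convex function on $B$.

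Next I would apply the Legendre transform to both sides of $\textsc{f}^{\text{sc}}=I^{\ast}$. On the left-hand side, the relation $\textsc{f}^{\ast}=(\textsc{f}^{\text{sc}})^{\ast}$ recalled above gives $(\textsc{f}^{\text{sc}})^{\ast}=\textsc{f}^{\ast}$; on the right-hand side we get $(I^{\ast})^{\ast}=I^{\ast\ast}$. Hence $\textsc{f}^{\ast}=I^{\ast\ast}$, and it only remains to identify $I^{\ast\ast}$ with $I$. For this I would invoke the Fenchel--Moreau theorem applied to the convex function $I$, namely $I^{\ast\ast}=I^{\text{sc}}$; since $I$ is assumed lower semicontinuous we have $I^{\text{sc}}=I$, so that $\textsc{f}^{\ast}=I^{\ast\ast}=I$, which is the claim.

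The one place where the argument is not entirely formal — and thus the main obstacle — is the duality bookkeeping in the definition of $I^{\ast\ast}$: because $I$ lives on $B^{\ast}$, its biconjugate is computed for the pairing of $B^{\ast}$ against $B$, so the identity $I^{\ast\ast}=I$ (equivalently, the version of Fenchel--Moreau being used) is valid only if ``lower semicontinuous'' is understood with respect to the topology $\sigma(B^{\ast},B)$. In the situation where this lemma is applied, $B$ is a Hilbert space, hence reflexive, and $I$ is the centered Liouville action $S_{L,(\chi_k,z_k)}$, for which norm, weak and weak-$\ast$ lower semicontinuity all coincide by convexity; under this reflexivity the three-line computation above is rigorous verbatim. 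I would therefore either add an explicit reflexivity (or Hilbert space) hypothesis, or else phrase the lower semicontinuity assumption in the weak-$\ast$ sense, and then carry out the above steps.
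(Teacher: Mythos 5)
Your proof is correct and is essentially the paper's own argument: the hypothesis reads $\textsc{f}^{\text{sc}}=I^{\ast}$, and conjugating both sides, together with Fenchel--Moreau ($I^{\ast\ast}=I$ for $I$ convex and lower semicontinuous) and the recorded identity $\textsc{f}^{\ast}=(\textsc{f}^{\text{sc}})^{\ast}$, yields $I=\textsc{f}^{\ast}$. Your additional remark on the duality bookkeeping (lower semicontinuity of $I$ on $B^{\ast}$ being understood for the $\sigma(B^{\ast},B)$ topology, harmless here since the relevant spaces are Hilbert, hence reflexive) is a legitimate refinement that the paper leaves implicit.
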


\proof
We have $I^{\ast}=\textsc{f}^{\text{sc}}$ and therefore by the Moreau-Legendre theorem we get $I= I^{\ast \ast}=(\textsc{f}^{\text{sc}})^{\ast}=\textsc{f}^{\ast}$. \qed

\subsubsection{The Legendre transform of the Liouville action}

Recall that $c_\psi= \int_{\C} \psi(z) g(z) \dd^2 z   + \sum_{k=1}^n \chi_k -4$.
Now, we consider the Laplace functional $\textsc{f} (\psi)$ defined for all $\psi \in H^1(\hat{\C})$ by the formula

 \begin{equation*}
\textsc{f} (\psi) =
\begin{cases}
& - l_{(\chi_k,z_k)}+  c_\psi \ln \frac{c_\psi}{\Lambda} - c_\psi  -J_\psi(  h_\psi )   \; \text{where}  \; h_\psi \; \text{solves} \; \eqref{equationdebase} \;  \text{if} \;  c_\psi>0 \\
& \infty, \; \text{if} \;  c_\psi \leq 0 \\
\end{cases}
\end{equation*}

We have the following lemma:

\begin{lemma}
The lower semicontinuous enveloppe of $\textsc{f}$ has the following expression:
\begin{equation*}
\textsc{f}^{sc} (\psi) =
\begin{cases}
& - l_{(\chi_k,z_k)}+ c_\psi \ln \frac{c_\psi}{\Lambda} - c_\psi  -J_\psi(  h_\psi )   \; \text{where}  \; h_0 \; \text{solves} \; \eqref{equationdebase} \;  \text{if} \;  c_\psi \geq 0, \\
& \infty, \; \text{if} \;  c_\psi < 0 .
\end{cases}
\end{equation*}
\end{lemma}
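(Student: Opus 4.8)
The plan is to treat the statement according to the sign of $c_\psi$, using that $\psi\mapsto c_\psi$ is a continuous affine functional on $H^1(\hat\C)$. On the open set $\{c_\psi<0\}$ a whole neighbourhood of any point stays in the set, so $\textsc{f}\equiv\infty$ there forces $\textsc{f}^{sc}\equiv\infty$, in agreement with the claimed formula. On the open set $\{c_\psi>0\}$ I would argue that $\textsc{f}$ is already lower semicontinuous, hence $\textsc{f}^{sc}=\textsc{f}$: for each fixed $h$ the map $\psi\mapsto J_\psi(h)$ is continuous (indeed affine) on $H^1(\hat\C)$, so $\psi\mapsto\inf_h J_\psi(h)=J_\psi(h_\psi)$ is upper semicontinuous, whence $-J_\psi(h_\psi)$ is lower semicontinuous; since in addition $\psi\mapsto c_\psi\ln(c_\psi/\Lambda)-c_\psi$ is continuous on $\{c_\psi>0\}$ and $l_{(\chi_k,z_k)}$ is a constant, $\textsc{f}$ is lower semicontinuous there. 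Thus the whole content of the lemma is the identification of $\textsc{f}^{sc}$ on the boundary $\{c_\psi=0\}$.

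Fix $\psi$ with $c_\psi=0$. Here equation \eqref{equationdebase} degenerates to the linear Poisson equation $\Delta_g h=-2\pi(\psi-m_g(\psi))$, whose right-hand side has vanishing mean against $g\,\dd^2x$; it therefore has a unique mean-zero solution $h_\psi$, which is the minimiser of the degenerate functional $J_\psi(h)=\frac{1}{4\pi}\int_\C|\nabla_g h|^2 g\,\dd^2x-\int_\C(\psi-m_g(\psi))h\,g\,\dd^2x$, and since $c_\psi\ln(c_\psi/\Lambda)-c_\psi=0$ the claim amounts to $\textsc{f}^{sc}(\psi)=-l_{(\chi_k,z_k)}-J_\psi(h_\psi)$. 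For the lower bound I would observe that a nearby $\psi'$ with $c_{\psi'}\leq0$ contributes $\textsc{f}(\psi')=\infty$, while if $c_{\psi'}>0$ the minimality of $h_{\psi'}$ gives $\textsc{f}(\psi')\geq -l_{(\chi_k,z_k)}+c_{\psi'}\ln(c_{\psi'}/\Lambda)-c_{\psi'}-J_{\psi'}(h_\psi)$; since $\psi'\mapsto J_{\psi'}(h_\psi)$ is continuous with $J_{\psi'}(h_\psi)\to J_\psi(h_\psi)$ and $c_{\psi'}\ln(c_{\psi'}/\Lambda)-c_{\psi'}\to0$ as $\psi'\to\psi$, taking the infimum over a ball $B(\psi,\delta)$ and letting $\delta\to0$ yields $\textsc{f}^{sc}(\psi)\geq -l_{(\chi_k,z_k)}-J_\psi(h_\psi)$.

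For the matching upper bound I would use the explicit approximating sequence $\psi_a:=\psi+a$ with $a\downarrow0$, so that $c_{\psi_a}=4\pi a>0$ and $\psi_a-m_g(\psi_a)=\psi-m_g(\psi)$; then $\textsc{f}(\psi_a)=-l_{(\chi_k,z_k)}+c_{\psi_a}\ln(c_{\psi_a}/\Lambda)-c_{\psi_a}-J_{\psi_a}(h_{\psi_a})$, and since the middle terms tend to $0$ it remains to prove $J_{\psi_a}(h_{\psi_a})\to J_\psi(h_\psi)$. This convergence is the main obstacle: it is a $\Gamma$-convergence type statement asserting that the minimisers of the strictly convex functionals $J_{\psi_a}$ on $\bar H^1(\hat\C)$ do not escape as $c_{\psi_a}\downarrow0$, and that their minimal values converge to that of the degenerate functional. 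I would settle it using the Moser--Trudinger inequality \eqref{MTineq}, which yields a coercivity bound for $J_{\psi_a}$ uniform for $c_{\psi_a}$ in a bounded set (the quadratic term dominating, together with the uniform bound $\ln\int_\C we^h g\,\dd^2x\geq\ln(4\pi)$ from Jensen and $\int_\C h g\,\dd^2x=\int_\C\ln w\,g\,\dd^2x=0$): hence $(h_{\psi_a})$ is bounded in $\bar H^1(\hat\C)$, and along any weakly convergent subsequence $h_{\psi_{a_n}}\rightharpoonup h_*$ one has $\liminf_n J_{\psi_{a_n}}(h_{\psi_{a_n}})\geq J_\psi(h_*)$ by weak lower semicontinuity of the Dirichlet energy, weak continuity of the linear term and $c_{\psi_a}\to0$, while $J_{\psi_a}(h_{\psi_a})\leq J_{\psi_a}(h_\psi)\to J_\psi(h_\psi)$ gives the reverse inequality, so $h_*$ minimises the degenerate $J_\psi$ and, by uniqueness, $h_*=h_\psi$ and $J_{\psi_a}(h_{\psi_a})\to J_\psi(h_\psi)$ along the full family. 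Combining the three regimes $c_\psi<0$, $c_\psi=0$, $c_\psi>0$ then gives the announced expression for $\textsc{f}^{sc}$.
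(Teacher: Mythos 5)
Your proof is correct and follows essentially the same route as the paper: the same trichotomy on the sign of $c_\psi$, and at the boundary $c_\psi=0$ the same approximating sequence $\psi+\epsilon$ with the key step being the convergence $J_{\psi+\epsilon}(h_{\psi+\epsilon})\to J_\psi(h_\psi)$. The only (harmless) difference is that you identify the limit of the minimizers by a direct variational argument (weak lower semicontinuity of the Dirichlet energy plus the comparison $J_{\psi_a}(h_{\psi_a})\le J_{\psi_a}(h_\psi)$), whereas the paper passes to the limit in the Euler--Lagrange equation \eqref{equationdebase}; you are also somewhat more explicit than the paper about the lower semicontinuity on $\{c_\psi>0\}$ and the lower bound at $c_\psi=0$, both of which the paper treats as clear.
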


%

\proof
In the proof, we denote   by $\textsc{Y}$ the function on the right-hand side of the lemma. We want to show that $\textsc{Y}=\textsc{f}^{sc}$.
It is clear that $\textsc{Y}(\psi)= \textsc{f}(\psi)$ if $c_\psi \not = 0$. We choose $\psi$ such that $c_\psi=0$. Let $\epsilon>0$. We have
\begin{equation*}
\textsc{f} (\psi+\epsilon)=- l_{(\chi_k,z_k)}+\int \psi h_\epsilon \dd g -\frac{1}{4 \pi} \int |\nabla h_\epsilon|^2\dd^2z - \epsilon \ln \left(  \int w e^{h_\epsilon} \dd g  \right)  +4 \pi \epsilon \ln \frac{4 \pi \epsilon}{\Lambda}- 4 \pi \epsilon
\end{equation*}
where $h_\epsilon$ minimizes
\begin{equation*}
J_\epsilon(h)= \int_{\R^2} |\nabla h|^2 \, \dd^2z-4 \pi \int_{\R^2} (\psi- m_{g}(\psi) ) h \, \dd g  +4 \pi \epsilon \ln \int_{\R^2} w e^h \, \dd g
\end{equation*}
among functions with vanishing mean $h\in \bar H^1(\hat{\C})$. Since $J_\epsilon(h_\epsilon)$ is bounded independently from $\epsilon$ we deduce that $(h_\epsilon)_\epsilon$ is sequentially (weakly) compact and also by Moser-Trudinger \eqref{MTineq} that $w e^{h}$ stays bounded in $L^1$. Therefore, we can go to the limit in  \eqref{equationdebase} and deduce that any limit $\mathbf{h}$ of a subsequence of $(h_\epsilon)$ satisfies
\begin{equation*}
\Delta_{g} \mathbf{h}= -2 \pi (\psi- m_{g} (\psi )).
\end{equation*} 
Hence, we deduce convergence of $(h_\epsilon)$ to $\mathbf{h}$ which solves the above equation. Now, we use the fact that $\int |\nabla \mathbf{h}|^2\,\dd^2z= 2 \pi \int (\psi- m_{g} (\psi )) \mathbf{h}\,\dd g $ to deduce that we have convergence in $H^1(\hat \C)$. Therefore $\textsc{Y}(\psi)= \underset{\epsilon \to 0}{\lim}\, \textsc{f}(\psi+\epsilon) $. This shows the result. \qed

Now we  show  that $\textsc{f}^{sc}$ is the Legendre transform of the Liouville action 
\begin{proposition}\label{prop:legendre} 
We have
\begin{equation*}
\textsc{f}^{sc} (\psi)= \sup_{h} \left ( \int_{\C} \psi(z) h(z) g(z) \dd^2 z   -S_{L, (\chi_k,z_k)} (h)  \right )- l_{(\chi_k,z_k)}
\end{equation*}
and therefore also $S_{L, (\chi_k,z_k)} = (\textsc{f}^{sc})^{\ast}=\textsc{f}^{\ast} $. 
\end{proposition}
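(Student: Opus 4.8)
The plan is to prove the first displayed identity by computing its right-hand side explicitly, and then to read off $S_{L,(\chi_k,z_k)}=\textsc{f}^{\ast}$ from it via Fenchel--Moreau. Fix $\psi$. Since $S_{L,(\chi_k,z_k)}$ equals $+\infty$ off $H^1(\hat\C)$, the supremum may be taken over $h\in H^1(\hat\C)$; write $h=h_0+\bar h$ with $\bar h:=\frac1{4\pi}\int_\C h\,g\,\dd^2z\in\R$ the $g$-mean (recall $\int_\C g\,\dd^2z=4\pi$) and $h_0\in\bar H^1(\hat\C)$. Using that the Dirichlet energy sees only $h_0$, that $\int_\C e^{h}w\,g\,\dd^2z=e^{\bar h}\int_\C e^{h_0}w\,g\,\dd^2z$, that $\int_\C h\,g\,\dd^2z=4\pi\bar h$, and that $\int_\C\psi h\,g\,\dd^2z=\int_\C(\psi-m_g(\psi))h_0\,g\,\dd^2z+\bar h\int_\C\psi\,g\,\dd^2z$, one finds that $\int_\C\psi h\,g\,\dd^2z-S_{L,(\chi_k,z_k)}(h)$ splits as the sum of a term depending only on $\bar h$, namely $c_\psi\bar h-\Lambda e^{\bar h}\int_\C e^{h_0}w\,g\,\dd^2z$ with $c_\psi$ as in \eqref{def:cphi}, and a term depending only on $h_0$, namely $\int_\C(\psi-m_g(\psi))h_0\,g\,\dd^2z-\frac1{4\pi}\int_\C|\nabla_z h_0|^2\,\dd^2z$.

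Maximise first over $\bar h\in\R$: the concave map $t\mapsto c_\psi t-\Lambda A e^{t}$, with $A:=\int_\C e^{h_0}w\,g\,\dd^2z\in(0,\infty)$ finite by Moser--Trudinger \eqref{MTineq}, has supremum $+\infty$ if $c_\psi<0$, supremum $0$ (unattained) if $c_\psi=0$, and supremum $c_\psi\ln\frac{c_\psi}{\Lambda A}-c_\psi$ if $c_\psi>0$. Thus for $c_\psi<0$ both sides of the claimed identity are $+\infty$. For $c_\psi>0$, substituting the optimal $\bar h$ and rearranging with the definition \eqref{defJ} of $J_\psi$ (and $\int_\C|\nabla_g h_0|^2 g\,\dd^2z=\int_\C|\nabla_z h_0|^2\,\dd^2z$), the residual supremum over $h_0\in\bar H^1(\hat\C)$ becomes $c_\psi\ln\frac{c_\psi}{\Lambda}-c_\psi-\inf_{h_0}J_\psi(h_0)$, and Proposition \ref{sol:h} (its hypothesis holding since $c_\psi>0>-4(1\wedge\inf_k(1-\chi_k/2))$) identifies that infimum with $J_\psi(h_\psi)$; this equals precisely the value of $\textsc{f}^{sc}(\psi)+l_{(\chi_k,z_k)}$ furnished by the lemma preceding Proposition \ref{prop:legendre}. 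For $c_\psi=0$ the inner supremum over $\bar h$ is not attained and one is left with the concave quadratic maximisation in $h_0$ of $\int_\C(\psi-m_g(\psi))h_0\,g\,\dd^2z-\frac1{4\pi}\int_\C|\nabla_z h_0|^2\,\dd^2z$, whose maximiser solves $\Delta_g h_0=-2\pi(\psi-m_g(\psi))$ (that is, \eqref{equationdebase} with $c_\psi=0$), hence equals $h_\psi$, with maximal value $-J_\psi(h_\psi)$ after integration by parts --- again matching that lemma. This proves the first displayed identity.

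For the second identity, recall that the extension of $S_{L,(\chi_k,z_k)}$ to $H^{-1}(\hat\C)$ is convex and lower semicontinuous (it is a good rate function, as recorded after \eqref{defcentered}). Having just shown $\textsc{f}^{sc}(\psi)+l_{(\chi_k,z_k)}=\sup_h\big(\langle\psi,h\rangle-S_{L,(\chi_k,z_k)}(h)\big)$ for the pairing $\langle\psi,h\rangle=\int_\C\psi h\,g\,\dd^2z$, the abstract Lemma of Section \ref{Convexconsi} (Fenchel--Moreau), combined with $\textsc{f}^{\ast}=(\textsc{f}^{sc})^{\ast}$, yields $S_{L,(\chi_k,z_k)}=\textsc{f}^{\ast}$, the additive constant $l_{(\chi_k,z_k)}$ being carried through the conjugation.

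The only genuinely delicate point is the threshold $c_\psi=0$, where the supremum over the zero mode $\bar h$ is not attained and one must pass to lower semicontinuous envelopes --- which is precisely why Proposition \ref{prop:legendre} is phrased through $\textsc{f}^{sc}$. This borderline case, however, is already settled in the lemma immediately preceding the proposition, so here it need only be invoked. The remaining ingredients --- the zero-mode decomposition, the one-dimensional optimisation over $\bar h$, the identification of the $h_0$-supremum with $-J_\psi(h_\psi)$ via Proposition \ref{sol:h}, and the tracking of the constants $l_{(\chi_k,z_k)}$ and $c_\psi\ln(c_\psi/\Lambda)$ --- are routine.
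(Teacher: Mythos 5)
Your proof is correct and follows essentially the same route as the paper's: the zero-mode decomposition $h=h_0+\bar h$ is exactly the paper's reduction to the function $T(c,h_0)$, the one-dimensional optimisation over the mean yields the same $c_\psi\ln\frac{c_\psi}{\Lambda}-c_\psi-c_\psi\ln\int we^{h_0}g$, and the residual supremum over $h_0$ is identified with $-J_\psi(h_\psi)$ via Proposition \ref{sol:h} before invoking Fenchel--Moreau. The only difference is that you treat the borderline case $c_\psi=0$ explicitly, which the paper glosses over; this is a welcome refinement rather than a different argument.
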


\proof The supremum of the above proposition is the same (up to the $- l_{(\chi_k,z_k)}$ term) as the supremum of the following function $T(c,h)$ defined for $c \in \R$ and $h\in \bar H^1(\hat{\C})$  by
\begin{equation*}
T(c,h) = c_\psi c +\int_{\C} \psi(z) h(z) g(z) \dd^2 z  - \frac{1}{4\pi}  \int_{\C}  |\nabla  h(z) |^2   \dd^2 z    - \Lambda e^{c} \int_{\C} w(z) e^{h(z)} g(z) \dd^2 z   .
\end{equation*}

If $c_\psi<0$, one can fix $h$ and take the limit $c \to -\infty$ which shows that the supremum of $T(c,h)$ is infinity. Therefore, we suppose that $c_\psi \geq 0$. 
For fixed $h$, $c \mapsto T(c,h)$ is maximal for $\Lambda e^{c} \int_{\C} w(z) e^{h(z)} g(z) \dd^2 z       = c_\psi   $. This yields
\begin{equation*}
\sup_{c \in \R}  T(c,h) = c_\psi \ln \frac{c_\psi}{\Lambda}-c_\psi+\int_{\C} \psi(z) h (z) g(z) \dd^2 z  - \frac{1}{4\pi}  \int_{\C}  |\nabla h(z) |^2   \dd^2 z    - c_\psi \ln \left(  \int_{\C} w(z) e^{h(z)} g(z) \dd^2 z   \right)
\end{equation*}
Now, one can conclude by optimizing this expression on $h$. \qed

\medskip
Now, we introduce the following set called exposed points in the language of large deviation theory (see section 4.5.3 in \cite{dembo}):

\begin{equation*}
\mathcal{F}= \Big\{ h \in H^{-1}(\hat \C)   ; \; \exists \psi \in   H^{1}(\hat \C), \; \forall h' \not = h ,   \int \psi h g- S_{L}(h)  >  \int \psi h' g- S_{L}(h')       \Big\}.
\end{equation*}

In words, $\mathcal{F}$ is the set of $ h \in H^{-1}(\hat \C)$ with the property: there exists $\psi \in   H^{1}(\hat \C)$ such that 
\begin{equation*}
u \mapsto \int \psi(z) u(z) g(z) \dd^2 z   -S_{L, (\chi_k,z_k)} (u) 
\end{equation*} 
admits a unique maximum at $u=h$. The corresponding $\psi$ is called an exposed hyperplane. The following lemma shows that there are many exposed points:

\begin{lemma}\label{lemmaexposed}
All smooth $h \in  H^{-1}(\hat \C) $ are exposed points for $S_{L, (\chi_k,z_k)}$, i.e. if $h$ is smooth then there exists some $\psi$ in $H^1(\hat \C)$ such that the function $u \mapsto \int \psi(z) u(z) g(z) \dd^2 z   -S_{L, (\chi_k,z_k)} (u) $ admits a unique supremum at $u=h$. The corresponding exposed hyperplane $\psi$ satisfies the following property: there exists some $t>1$ such that $\textsc{f} (t \psi)< \infty$. Moreover, one has for all open set $G \subset H^{-1} (\hat \C)$ 
\begin{equation*}
\inf_{  h \in G \cap \mathcal{F}  }  S_{L, (\chi_k,z_k)} (h)= \inf_{  h \in G }  S_{L, (\chi_k,z_k)} (h) .
\end{equation*}

\end{lemma}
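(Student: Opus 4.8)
Work with the extension of $S_{L,(\chi_k,z_k)}$ to $H^{-1}(\hat\C)$ (equal to $+\infty$ off $H^1(\hat\C)$), which is convex and lower semicontinuous. The first point is that this functional is in fact \emph{strictly} convex on $H^1(\hat\C)$: the Dirichlet and linear terms in \eqref{defcentered} are convex, while $h\mapsto\Lambda\int_\C e^{h(z)}w(z)g(z)\,\dd^2z$ is strictly convex because $t\mapsto e^t$ is strictly convex and $w,g>0$. Hence, for any $\psi$, the functional $\Phi_\psi(u):=\int_\C\psi(z)u(z)g(z)\,\dd^2z-S_{L,(\chi_k,z_k)}(u)$ is strictly concave, so it has at most one maximizer. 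To show that a smooth $h$ belongs to $\mathcal F$ it thus suffices to produce $\psi\in H^1(\hat\C)$ for which $h$ is a critical point of $\Phi_\psi$ (it is then automatically the unique maximizer), and to verify the additional property $\textsc{f}(t\psi)<\infty$ for some $t>1$.

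\textbf{The exposing hyperplane.} For smooth $h$ take the variational derivative of $S_{L,(\chi_k,z_k)}$ at $h$,
\[
\psi:=-\tfrac1{2\pi}\Delta_g h+\Lambda e^{h}w+\tfrac{4-\sum_k\chi_k}{4\pi},\qquad \Delta_g=g^{-1}\Delta_z,
\]
so that $\int_\C\psi(z)v(z)g(z)\,\dd^2z=DS_{L,(\chi_k,z_k)}(h)[v]$ for every direction $v$; by strict concavity $h$ is then the unique maximizer of $\Phi_\psi$, i.e.\ $h\in\mathcal F$. For the extra property, integrate over the sphere: $\int_\C\Delta_g h\,g\,\dd^2z=\int_\C\Delta_z h\,\dd^2z=0$ and $\int_\C g\,\dd^2z=4\pi$, whence $c_\psi=\int_\C\psi\,g\,\dd^2z+\sum_k\chi_k-4=\Lambda\int_\C e^{h}w\,g\,\dd^2z>0$, and more generally $c_{t\psi}=t\Lambda\int_\C e^{h}w\,g\,\dd^2z-(t-1)\big(\sum_k\chi_k-4\big)$, which stays positive for $t>1$ sufficiently close to $1$; since $\textsc{f}(\psi')<\infty$ exactly when $c_{\psi'}>0$ (by its definition via \eqref{defJ}), the claim follows. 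The genuinely delicate step, and the main obstacle, is checking $\psi\in H^1(\hat\C)$: because $w(z)\sim|z-z_k|^{-\chi_k}$ near $z_k$, the term $\Lambda e^{h}w$ is singular there, and one must exploit $\chi_k<2$ (and, if the variational derivative itself narrowly fails to be in $H^1$, replace $\psi$ by a mollification $\rho_\delta*\psi$ and invoke continuous dependence of the maximizer of $\Phi_\psi$ on $\psi$ in the $H^{-1}$ topology, which by strict convexity still produces exposed points $\tilde h$ with $\tilde h\to h$ in $H^{-1}$ and $S_{L,(\chi_k,z_k)}(\tilde h)\to S_{L,(\chi_k,z_k)}(h)$, the only feature used below).

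\textbf{The infimum identity.} Let $G\subset H^{-1}(\hat\C)$ be open; we may assume $\inf_GS_{L,(\chi_k,z_k)}<\infty$. Fix $h\in G$ with $S_{L,(\chi_k,z_k)}(h)<\infty$, i.e.\ $h\in H^1(\hat\C)$, and choose smooth $h_n\to h$ in $H^1$-norm. Since $H^1(\hat\C)\hookrightarrow H^{-1}(\hat\C)$ continuously and $G$ is $H^{-1}$-open, $h_n\in G$ for $n$ large, and each $h_n\in\mathcal F$ by the previous step. Moreover $S_{L,(\chi_k,z_k)}$ is continuous along $H^1$-convergent sequences: the Dirichlet and linear terms converge trivially, while $\int_\C e^{h_n}w\,g\,\dd^2z\to\int_\C e^{h}w\,g\,\dd^2z$ by the Moser--Trudinger inequality \eqref{MTineq} together with dominated convergence along an almost everywhere convergent subsequence. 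Hence $S_{L,(\chi_k,z_k)}(h_n)\to S_{L,(\chi_k,z_k)}(h)$, so $\inf_{G\cap\mathcal F}S_{L,(\chi_k,z_k)}\le S_{L,(\chi_k,z_k)}(h)$; taking the infimum over such $h\in G$ and combining with the trivial reverse inequality gives $\inf_{G\cap\mathcal F}S_{L,(\chi_k,z_k)}=\inf_GS_{L,(\chi_k,z_k)}$, which completes the proof.
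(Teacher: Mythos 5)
Your proposal is correct and follows essentially the same route as the paper: the exposing hyperplane you write down, $\psi=-\tfrac{1}{2\pi}\Delta_g h+\Lambda e^{h}w+\tfrac{4-\sum_k\chi_k}{4\pi}$, is exactly the paper's $\psi=\bar\psi+\tfrac{\alpha}{4\pi}$ after substituting $\alpha+\sum_k\chi_k-4=\Lambda\int e^{h}w\,g\,\dd^2z$; uniqueness of the maximizer comes from strict convexity of the exponential term just as the paper intends; and your computation $c_{t\psi}=t\Lambda\int e^{h}w\,g\,\dd^2z-(t-1)(\sum_k\chi_k-4)>0$ for $t>1$ close to $1$ is the paper's $c_{t\psi}=t\alpha+\sum_k\chi_k-4>0$. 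The regularity worry you flag (whether $\Lambda e^{h}w$ lies in $H^1(\hat\C)$ near the singularities of $w$ when $\chi_k>0$) is genuine, but the paper's own proof writes down the same $\psi$ without addressing it, so on this point your treatment is if anything the more careful of the two.
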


\proof Fix some smooth $h$. We can write this element as $h_0+\beta$ where $h_0 \in \bar H^{1} (\hat \C)$ has vanishing mean on the sphere. We set
\begin{equation*}
\alpha+\sum_k \chi_k-4= \Lambda e^{\beta} \int_{\C} w(z) e^{h_0(z)} g(z) \dd^2 z  
\end{equation*} 
and then
\begin{equation*}
\bar{\psi}=   -\frac{1}{2 \pi} \Delta_{g} h_0+(\sum_k \chi_k-4+\alpha)(\frac{w e^{h_0}}{\int w e^{h_0} g  }-\frac{1}{4 \pi}).
\end{equation*}
We set $\psi=\bar{\psi}+\frac{\alpha}{4 \pi}$. It is easy to to see that the function $u \mapsto \int \psi u g -I(u) $ has a unique supremum given by $h$. Moreover, one has $c_\psi=\alpha+\sum_k \chi_k-4 >0$ and therefore there exists $t>1$ such that $c_ {t \psi}= t \alpha+\sum_k \chi_k-4 >0$ hence $\textsc{f} (t \psi)< \infty$. Finally, the equality $\inf_{  h \in G \cap \mathcal{F}  }  S_{L, (\chi_k,z_k)} (h)= \inf_{  h \in G }  S_{L, (\chi_k,z_k)} (h)$ is standard.\qed

 {\small 
}

\end{document}